\definecolor{green}{RGB}{0,127,0}
\definecolor{red}{RGB}{191,0,0}
\newcommand{\todoValentin}[1]{\todo[color=pink!40]{\textsc{Valentin} says: #1}}
\theoremstyle{plain}
\newtheorem{lemma}{Lemma}[section]
\newtheorem{theorem}[lemma]{Theorem}
\newtheorem{corollary}[lemma]{Corollary}
\newtheorem{proposition}[lemma]{Proposition}
\newtheorem{defprop}[lemma]{Definition-Proposition}
\newtheorem{definition}[lemma]{Definition}
\newtheorem{definition-lemma}[lemma]{Definition-Lemma}
\theoremstyle{remark}
\newtheorem{remark}{Remark}
\newcommand{\xdownarrow}[1]{{\left\downarrow\vbox to #1{}\right.\kern-\nulldelimiterspace}}
\newcommand{\splus}{\!+\!}
\newcommand{\sminus}{\!-\!}
\newcommand{\RR}{\mathbb{R}}
\newcommand{\C}{\mathbb{C}}
\newcommand{\Fth}{F^{\theta}}
\newcommand{\Ftr}{F^{\theta_3}}
\newcommand{\Gth}{G^{\theta}}
\newcommand{\Sym}[1]{\mathfrak{S}_{#1}}
\newcommand{\Z}{\mathbb{Z}}
\newcommand{\QQ}{\mathbb{Q}}
\newcommand{\pp}{\mathbf{p}}
\newcommand{\qq}{\mathbf{q}}
\newcommand{\PPP}{\mathcal{P}}
\DeclareMathOperator{\Pf}{Pf}
\def\la{\lambda}
\def\a{\alpha}
\DeclareMathOperator{\BGW}{BGW}
\DeclareMathOperator{\Tr}{Tr}
\DeclareMathOperator{\Mat}{Mat}
\DeclareMathOperator{\sgn}{sgn}
\DeclareMathOperator{\Gl}{GL}
\newcommand{\m}{\mathfrak{h}}
\DeclareMathOperator{\Symm}{Sym}
\DeclareMathOperator{\hook}{hook}
\DeclareMathOperator{\Res}{Res}
\DeclareMathOperator{\dd}{d}
\newcommand{\tttt}{\mathfrak{t}}
\newcommand{\hh}{\mathfrak{h}}
\newcommand{\hsum}{\mathop{\sum\hspace{-5mm}\Big/}}
\title[Enumeration of non-oriented maps via integrability]{Enumeration of non-oriented maps via integrability}
\author[V.~Bonzom]{Valentin Bonzom}
\address{Universit\'e Sorbonne Paris Nord, LIPN, CNRS, UMR 7030, F-93430 Villetaneuse, France}
\email{bonzom@lipn.univ-paris13.fr}
\author[G.~Chapuy]{Guillaume Chapuy}
\address{CNRS, IRIF UMR 8243, Universit\'e de Paris.}
\email{guillaume.chapuy@irif.fr}
\author[M.~Dołęga]{Maciej Dołęga}
\address{
Institute of Mathematics, 
Polish Academy of Sciences, 
ul. Śniadeckich 8, 
00-956 Warszawa, Poland.
}
\email{mdolega@impan.pl}
\thanks{This project has received funding from the European Research
  Council (ERC) under the European Union’s Horizon 2020 research and
  innovation programme (grant agreement No. ERC-2016-STG 716083
  “CombiTop”). MD is supported from {\it Narodowe Centrum Nauki},
  grant UMO-2017/26/D/ST1/00186. VB is partially supported by the ANR-20-CE48-0018 3DMaps.}
\begin{document}
\begin{abstract}

In this note, we examine how the BKP structure of the generating series of several models of maps on non-oriented surfaces can be used to obtain explicit and/or efficient recurrence formulas for their enumeration according to the genus and size parameters.

	Using techniques already known in the orientable case (elimination of variables via Virasoro constraints or Tutte equations), we naturally obtain recurrence formulas with non-polynomial coefficients. This non-polynomiality reflects the presence of shifts of the charge parameter in the BKP equation. Nevertheless, we show that it is possible to obtain non-shifted versions, meaning pure ODEs for the associated generating functions, from which recurrence relations with polynomial coefficients can be extracted. We treat the cases of triangulations, general maps, and bipartite maps.

	These recurrences with polynomial coefficients are conceptually interesting but bigger to write than those with non-polynomial coefficients. However they are relatively nice-looking in the case of one-face maps. In particular we show that Ledoux's recurrence for non-oriented one-face maps can be recovered in this way, and we obtain the analogous statement for the (bivariate) bipartite case.
      \end{abstract}

\maketitle


\section{Introduction and main results}

In this note, we are interested in obtaining simple, or at least efficient, recurrence formulas to count maps on surfaces according to their genus and  size parameters. 
For us, a \emph{map} is the 2-cell embedding of a connected multigraph in a compact connected surface, considered up to homeomorphism. Our surfaces are not necessarily orientable, and we call \emph{genus} of a surface the number $g\in \tfrac{1}{2}\mathbb{N}$ such that its Euler characteric is $2-2g$. The sphere has genus $0$, the projective plane has genus $\tfrac{1}{2}$, the torus and Klein bottle have genus $1$, etc.

Perhaps one of the nicest-looking formulas in the field of map enumeration is the Goulden-Jackson recurrence formula for orientable \emph{triangulations}, i.e. maps in which all faces are incident to three edge-sides. The Goulden-Jackson recurrence~\cite{GouldenJackson2008}, in fact also discovered in an equivalent form  in~\cite[Eq.~(B.6)]{KazakovKostovNekrasov1999}, asserts that the number $t_{n,g}$ of rooted\footnote{precise definitions all terms used in this introduction are given in the later sections.} triangulations   with $n$ faces on an \emph{orientable}  surface of genus $g$  is solution of the equation 
\begin{align}\label{eq:GJ}
	(n+1)	t_{n,g} = 4 n(3n-2) (3n-4) t_{n-1,g-1}+4\sum_{i+j=n-2\atop h+k=g}
	(3i+2)(3j+2)t_{i,h} t_{j,k}.
\end{align}
This formula was immediately recognized as a breakthrough in the field, 
because it gives a much better access to these numbers (computational or theoretical) than the classical techniques.

Indeed, in the classical approach, one introduces generating functions of maps of genus $g$ with a certain number of additional boundaries, and one shows that a combinatorial operation of root-deletion on the maps (the ``Tutte decomposition'') implies a functional equation for these functions. This approach has been very successful in the planar case since the work of Tutte, see e.g.~\cite{Tutte1962a,Tutte1962c,Tutte1963,BenderCanfield1994,BernardiBousquetMelou2017,BernardiBousquetMelou2011}. In higher genus, it was pioneered by 
Lehman and Walsh~\cite{WalshLehman1972} 
and later Bender and Canfield,
who showed that the generating functions of maps of fixed genus and number of  boundaries can be computed inductively on the Euler characteristic, thus revealing their particularly nice algebraic structure as well as their singular behaviour~\cite{BenderCanfield1986, BenderCanfield1991}. 
Bender and Canfield's inductive technique can be seen as a predecessor of the Chekhov--Eynard--Orantin
topological recursion~\cite{ChekhovEynardOrantin2006,EynardOrantin2007}, a powerful theory invented in the context of matrix integrals~\cite{Eynard:book} which has now been applied to study the structure of fixed-genus generating functions of many models of maps or in enumerative geometry~\cite{Eynard2014,AlexandrovChapuyEynardHarnad2020,BychkovDuninBarkowskiKazarianShadrin2020,BelliardCharbonnierEynardGarciaFailde2021}.

The need to introduce additional boundaries (and ``catalytic'' variables to mark their sizes) makes these approaches ineffective  for large values of $g$. There seems to be no hope to obtain control on the bivariate numbers $t_{n,g}$ for non-fixed $g$ in this way, a striking contrast with the recurrence~\eqref{eq:GJ}. For example~\eqref{eq:GJ} also gives access to the so-called \emph{double-scaling limit} of the numbers $t_{n,g}$ \cite{KazakovKostovNekrasov1999,BenderGaoRichmond2008}, and it is also crucially used in the recent Budzinski-Louf breakthrough on large genus asymptotics~\cite{BudzinskiLouf2020a}. 
Perhaps we should insist on the fact that we mean no harm to the ``classical approach''. The study of the rational parametrization it gives rise to is a fascinating subject, including purely bijective combinatorics~\cite{ChapuyMarcusSchaeffer2009,ChapuyDolega2017,Lepoutre2019,AlbenqueLepoutre2019,DolegaLepoutre2020}, with probable link to the study of random geometries~\cite{BouttierGuitter2014, BouttierGuitterMiermont2021}.
On the other hand, as of today, the  bijective interpretation of the Goulden-Jackson recurrence~\eqref{eq:GJ} is wide open.

\medskip 

One reason why the recurrence~\eqref{eq:GJ} gives access to different results is because it comes from a completely different technique.
It is based on the fact that the generating function of maps on orientable surfaces, with an infinite number of variables $p_i, i\geq 1$ ($p_i$ marking faces of degree $i$), is a solution of the KP hierarchy -- an infinite sequence of PDEs originating from the theory of integrable systems, with deep connections to infinite dimensional Lie algebras and algebraic combinatorics~\cite{Kac2013Bombay,MiwaJimboDate2000}. The first equation of the hierarchy (\emph{the KP equation}) reads
\begin{align}\label{eq:KP}
-F_{3,1} + F_{2^2} + \frac{1}{2} F_{1^2}^2 + \frac{1}{12} F_{1^4} = 0,
\end{align}
where each $i$-index indicates a partial derivative with respect to $p_i$. In order to go from the KP equation~\eqref{eq:KP} to the recurrence~\eqref{eq:GJ}, Goulden and Jackson use the fact that the generating function $F(p_1,p_2,p_3,0,\dots)$ of maps having only faces of sizes $1,2,3$ can in fact be expressed in terms of  the series $F(0,0,t,0,\dots)$ of triangulations only. This enables one so set $p_i=t \delta_{i,3}$ in \eqref{eq:KP} and obtain an \emph{ODE} for the generating function of triangulations. The fact that the variables $p_1$ and $p_2$ can be \emph{eliminated} in this way relies on local surgery operations that can, in fact, be interpreted as first cases of the classical Tutte decomposition.

A similar elimination technique has since been used to obtain similar results for other models of maps. In~\cite{CarrellChapuy2015}, Carrell and the second author use the fact that the generating function of bipartite maps solves the KP hierarchy, and local operations related to the first Tutte equations for bipartite quadrangulations, to obtain a recurrence formula similar to~\eqref{eq:GJ} to count maps by vertices and faces. In~\cite{KazarianZograf2015}, Kazarian and Zograf use a slightly different elimination procedure, using the so-called Virasoro constraints (which are also related to Tutte decompositions) to recover the recurrence of~\cite{CarrellChapuy2015} and to obtain an analogue for bipartite maps.
These three works only use the first KP equation.
Finally, Louf~\cite{Louf2019} uses a different integrable hierarchy (the Toda hierarchy) to obtain a remarkable recurrence counting bipartite maps of arbitrary genus with control on all face degrees, using a different elimination technique inspired by Okounkov's work on Hurwitz numbers~\cite{Okounkov2000a}.

\medskip
In this paper, we are interested in obtaining variants of these results for non-oriented surfaces. Our starting point is the fact that generating functions of maps (or bipartite maps) are solutions of the BKP hierarchy of Kac and Van De Leur~\cite{KacVandeLeur1998} (see also the appendix of \cite{BonzomChapuyDolega2021}). An important difference between the KP and BKP hierarchy is that the function $F$ which is a solution of this hierarchy also involves a so-called \emph{charge parameter} $N$, which in our context will always be a variable marking faces or vertices of a certain kind. The first BKP equation reads;
\begin{equation}
    \label{BKP1Log}
\sminus{}F_{3,1}(N) \splus{} F_{2^2}(N) \splus{} \tfrac{1}{2} F_{1^2}(N)^2 \splus{} \tfrac{1}{12} F_{1^4}(N)=
	 S_2(N) \tau(N\!\sminus{}\!2) \tau(N\!\splus{}\!2)\tau(N)^{-2},
\end{equation}
where $\tau(N)=e^{F(N)}$ and where $S_2(N)$ is a model-dependant normalizing factor that will always be an explicit rational function in our case. 
In~\cite{Carrell2014}, Carrell used the fact that the generating function of non-oriented maps satisfies this equation, together with the elimination techniques developed by Goulden and Jackson in the orientable case, to obtain a functional equation for the case of triangulations (this technique leads to an explicit recurrence, see Theorem~\ref{thm:TriangulationsRecurrence} below).

The first task we perform in this paper, somehow unsurprisingly, is to apply the elimination of variables from the papers ~\cite{CarrellChapuy2015,KazarianZograf2015} to the BKP equation, to obtain recurrences of the same kind to count maps (by vertices and edges) and bipartite maps (by edges, and vertices of each colour) on non-oriented surfaces. The Virasoro constraints for these models are known (e.g.~\cite{BonzomChapuyDolega2021}) and our main task here is to make sure that the elimination procedure indeed works, i.e. that these equations indeed enable to reduce all derivatives appearing in~\eqref{BKP1Log} to differential polynomials in a single variable.
For completeness, we also treat Carrell's case of triangulations explicitly.
All these recurrences are larger than~\eqref{eq:GJ}, but incredibly short compared to any alternative, and it is not unreasonable to believe that they could have a combinatorial interpretation. For example, we obtain in Section~\ref{sec:alaCC15} the following recurrence formula.
Everywhere in the paper, the symbol $\hsum$ denotes a sum over elements of $\tfrac{1}{2}\mathbb{N}$.
\begin{theorem}[Counting maps by edges and genus]\label{thm:recurrence2}
	The number $\hh_n^g$ of rooted maps of genus $g$ with $n$ edges, orientable or not, can be computed from the following recurrence formula:
	\begin{multline} \label{eq:recurrenceCC}
	\hh_n^g
=
\tfrac{2}{(n+1)(n-2)} \Bigg(
n(2n-1)( 2 \hh_{n-1}^{g} + \hh_{n-1}^{g-1/2})
	+   \tfrac{(2 n-3) (2 n-2) (2 n-1) (2 n)}{2} \hh_{n-2}^{g-1}
\\ 
+12 \mkern-10mu\hsum_{g_1=0..g\atop g_1+g_2=g} \sum_{n_1=0..n \atop n_1+n_2=n} 
\mkern-10mu\tfrac{(2 n_2-1)(2 n_1-1)n_1}{2}        \hh_{n_2-1}^{g_2}          
 \hh_{n_1-1}^{g_1}   
-\mkern-10mu
\hsum_{g_1=0..g\atop g_1+g_2=g} \sum_{n_1=0..n-1\atop n_1+n_2=n}
	 \hsum_{g_0=0..g_1\atop g_1-g_0\in \mathbb{N}} \mkern-10mu {\textstyle {n_1+2-2 g_0 \choose n_1-2 g_1}} 
 2^{2 (1+g_1-g_0)}
      \hh_{n_1}^{g_0}
      \\
 \Big(
	 \tfrac{(2 n_2-1) (2 n_2-2) (2 n_2-3)}{2} \hh_{n_2-2}^{g_2-1}
	 -\delta_{(n_2,g_2)\neq(n,g)}\tfrac{n_2+1}{4} {\hh}_{n_2}^{g_2} 
 +\tfrac{2 n_2-1}{2} ( 2 \hh_{n_2-1}^{g_2} + \hh_{n_2-1}^{g_2-1/2} )\\
+6 
\hsum_{g_3=0..g_2\atop g_3+g_4=g_2}
\sum_{n_3=0..n_2\atop n_3+n_4=n_2}
\tfrac{(2 n_3-1)(2 n_4-1) }{4}\hh_{n_3-1}^{g_3} \hh_{n_4-1}^{g_4}
\Big)\vspace{3cm} \Bigg)	
	\end{multline}
	for $n>2$, with the initial conditions 
	$\hh_0^0=1$, $\hh_1^0=2$, $\hh_2^0=9$, $\hh_{1}^{1/2}=1$, $\hh_2^{1/2}=10$, $\hh_2^1=5$, and $\hh_n^g=0$ if $n<2g$.
	\end{theorem}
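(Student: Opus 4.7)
The plan is to adapt the elimination-of-variables technique of Carrell--Chapuy~\cite{CarrellChapuy2015} and Kazarian--Zograf~\cite{KazarianZograf2015} from the KP setting to the first BKP equation~\eqref{BKP1Log}, and then to eliminate the shifts in the charge parameter so as to obtain a recurrence whose coefficients are polynomials in $n$ and $g$. Throughout, we work with the full generating function $F(N;p_1,p_2,\dots)$ of rooted (possibly non-orientable) maps enriched by the BKP charge $N$, which satisfies~\eqref{BKP1Log}. The one-variable series of interest is obtained from $F$ by an appropriate uniform substitution of the $p_i$'s in which a single edge variable $z$ remains, and the coefficient $[z^n]$, further graded by genus, recovers $\hh_n^g$.

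Next, using the Virasoro constraints for non-oriented maps (established in~\cite{BonzomChapuyDolega2021}, and combinatorially encoding Tutte-type root-edge decompositions), I would rewrite each of the derivatives $F_{3,1}$, $F_{2^2}$, $F_{1^2}$, $F_{1^4}$ appearing in~\eqref{BKP1Log} as a differential polynomial in $z$ (and $N$) applied to $F$. The point to check is that iterating the Virasoro operators indeed produces all the needed mixed derivatives starting from the single catalytic variable $z$; this is the BKP analogue of the argument of~\cite{CarrellChapuy2015}, and is made possible by the fact that root-edge deletion either disconnects a map or not, and either preserves or drops the genus by a half- or a full integer, giving enough independent equations to close the system.

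The main obstacle will be dealing with the charge shifts in $N$, produced both by the right-hand side $\tau(N\pm 2)\tau(N)^{-2}$ of~\eqref{BKP1Log} and by the BKP-specific Virasoro operators. To kill them I would write the eliminated equation at several consecutive values of $N$ and take an algebraic combination that cancels the non-polynomial $\tau$-ratios, then expand $F(N\pm 2)=\sum_{k\ge 0}\frac{(\pm 2)^k}{k!}\partial_N^k F(N)$ and exploit that, at each order in $z$, $F$ is a polynomial in $N$ of controlled degree. This expansion is precisely what produces the combinatorial factors $\binom{n_1+2-2g_0}{n_1-2g_1}\,2^{2(1+g_1-g_0)}$ visible in~\eqref{eq:recurrenceCC}.

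Finally I would extract the coefficient $[z^n]$ and decompose the resulting identity according to the genus grading; the four groups of terms on the right-hand side of~\eqref{eq:recurrenceCC} should then trace back respectively to the $F_{2^2}$, $F_{1^4}$, $F_{1^2}^2$ and $F_{3,1}$ contributions of~\eqref{BKP1Log}. The small-case initial conditions and the restriction $n>2$ are verified by hand, and are exactly what is needed to cover the degenerate cases where the elimination is not invertible (for instance when the prefactor $(n+1)(n-2)$ vanishes).
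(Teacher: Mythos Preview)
Your sketch conflates two different results in the paper and misidentifies both the tau function and the mechanism that produces the recurrence.

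First, Theorem~\ref{thm:recurrence2} is \emph{not} obtained from the map tau function $F$ with the specialization $p_i\mapsto z$; that route (Section~\ref{sec:maps}) leads to Theorem~\ref{thm:MapsRecurrence}, a different recurrence. The proof of Theorem~\ref{thm:recurrence2} (Section~\ref{sec:alaCC15}) starts instead from the \emph{bipartite} tau function $G(t,\pp,u,v)$ and the specialization $\theta_2:p_i\mapsto\delta_{i,2}$, using Tutte's bijection between maps and bipartite quadrangulations to identify $\theta_2 G$ with the map generating function. The derivatives appearing in~\eqref{BKP1Log} are then handled by Lemma~\ref{lemma:CC} (digon-contraction identities, the non-oriented version of \cite[Lemma~7]{CarrellChapuy2015}), not by iterating Virasoro constraints.

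Second, the exponential factor $\tau(N{-}2)\tau(N{+}2)/\tau(N)^2$ is not killed by ``combining the equation at several consecutive values of $N$''. The paper simply applies $\partial_{p_1}$ to~\eqref{eq:BKPbipartite} and substitutes the original equation back to remove the exponential and the unknown prefactor $C(u,z)$; this yields~\eqref{eq:BKPbipartiteConnected}. Crucially, a single shifted factor $\partial_1 G^{u\pm 2;z\pm 2}$ survives this manipulation.

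Third, and this is the conceptual point you have backwards: the shifts are \emph{not} eliminated, and the coefficients of~\eqref{eq:recurrenceCC} are \emph{not} polynomials in the summation indices. The surviving shift is expanded exactly as you write, and that Taylor expansion is what introduces the binomials $\binom{n_1+2-2g_0}{n_1-2g_1}2^{2(1+g_1-g_0)}$, which are non-polynomial in $g_0,g_1$. The paper stresses this explicitly in the introduction; eliminating the shifts to get polynomial coefficients is a separate (and harder) story, carried out with three BKP equations, and gives Theorem~\ref{thm:rec-maps-nonshifted} rather than the present one.
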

We will obtain similar theorems for other models, in particular one with control on vertices and faces (Theorem~\ref{thm:recurrence2bis} or Theorem~\ref{thm:MapsRecurrence} depending on the elimination technique), one for bipartite maps (Theorem~\ref{thm:BipMapsRecurrence}),  and one for triangulations which is implicit in Carrel's work (Theorem~\ref{thm:TriangulationsRecurrence}).

The crucial fact that the BKP equation~\eqref{BKP1Log} involves not only the function $F(N)$ but also its shifts $F(N+2)$ and $F(N-2)$ has an important effect on the recurrence formulas we obtain.
The functional equations corresponding to these recurrences, which involve derivatives but also shifts of variables, are \emph{not} ODE in their main variable.
In return, the recurrences obtained do not have polynomial coefficients (for example \eqref{eq:recurrenceCC} contains binomial coefficients, which are not polynomials in the summation variables). This is a deep structural difference between the recurrences~\eqref{eq:GJ} and recurrences such as~\eqref{eq:recurrenceCC}.

\medskip

It is natural to ask if one could instead obtain  formulas in which the shifts are not involved, i.e. true polynomial recurrence formulas, corresponding to nonlinear ODEs with polynomial coefficients for the associated generating functions. This would be much more satisfying, at least at the conceptual level.
Maybe surprisingly, we will see that the answer to this question is \emph{yes}.
To see this, we will have to use several (in fact, three) equations of the BKP hierarchy. Using additional derivations and manipulations, we will be able to eliminate the shifts from equations, and obtain equations at fixed $N$, at the price of having to consider higher derivatives. It is not obvious, but it will be true, that a finite number of Virasoro constraints will still be sufficient to perform the elimination of variables in this context.

Due to the use of higher BKP equations and additional manipulations involved, the equations thus obtained are bigger than the previous ones\footnote{They are however not gigantic, see the accompanying worksheet~\cite{us:Maple}. The main ODE for maps fits in slightly more than a page in $\backslash$tiny LaTex print, we have however chosen not to reproduce it here.}. We will only state them here in a non-explicit form. The reader eager to see them at work may access these equations, and use them to compute numbers of maps, in the accompanying Maple worksheet~\cite{us:Maple}.
A typical statement we obtain from these methods is the following.
\begin{theorem}[Counting maps by edges and genus -- unshifted recurrence]\label{thm:rec-maps-nonshifted}
The number $\hh_n^g$ of rooted maps of genus $g$ with $n$ edges, orientable or not, 
is solution of an explicit recurrence relation of the form
	\begin{align}\label{eq:rec-maps-nonshifted}
         \hh_n^g = 
		\sum_{a=0}^{K_1}\hsum_{b=0}^{K_2} \sum_{k=1}^{K_3}
            \sum_{n_1,\dots,n_k \geq 1 \atop n_1+\cdots + n_k =n-a}
            \hsum_{g_1,\dots,g_k \geq 0 \atop  g_1+\dots + g_k =g-b}
	P_{a,b,k}(n_1,\dots,n_k) \hh_{n_1}^{g_1} \hh_{n_2}^{g_2}\dots \mathfrak{t}_{n_k}^{g_k},
	\end{align}
	where the $P_{a,b,k}$ are rational functions with $P_{0,0,1}=0$, and $K_1,K_2,K_3 <\infty$.
\end{theorem}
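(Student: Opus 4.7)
The plan is to start from three BKP equations (not just the first one \eqref{BKP1Log}), exploit their common shifted factor to eliminate the shifts in the charge parameter, and then perform a Virasoro elimination analogous to the one carried out for Theorem~\ref{thm:recurrence2}. More precisely, each of the first three BKP equations has the schematic form
\begin{equation*}
\mathcal{L}_i[F(N)] \;=\; T_i(N)\,\tau(N-2)\tau(N+2)\tau(N)^{-2},
\end{equation*}
where $\mathcal{L}_i$ is an explicit differential polynomial in $F(N)$ and $T_i(N)$ is a rational prefactor. Taking two independent ratios among these three equations cancels the common factor $\tau(N-2)\tau(N+2)\tau(N)^{-2}$, producing two purely differential relations at fixed charge $N$. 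This is the step that replaces ``equations involving shifts in $N$'' by ``equations at fixed $N$'', at the cost of introducing higher partial derivatives.

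Next I would specialize the auxiliary variables $p_i$ to the values matching the generating function $\sum_{n,g}\hh_n^g\cdots$ and apply the Virasoro constraints for the non-oriented maps model. In the shifted one-equation setting these constraints suffice to rewrite every derivative $F_{\mathbf{k}}(N)$ appearing in \eqref{BKP1Log} as a differential polynomial in a single variable; here higher derivatives appear, so I must verify that a \emph{finite} number of Virasoro constraints still performs the reduction. This follows from the triangular structure of the Virasoro operators in the face-degree grading: any fixed $F_{\mathbf{k}}(N)$ can be rewritten inductively in terms of derivatives in the single remaining variable, and since only finitely many such derivatives appear in the $\mathcal{L}_i$ after the shift elimination, the procedure terminates.

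After these two eliminations, the system collapses to a single nonlinear ODE with rational coefficients (in the remaining variable and in $N$) for the one-variable generating function of $\hh_n^g$. Extracting the coefficient of the appropriate monomial gives a recurrence of the form \eqref{eq:rec-maps-nonshifted}. The bounds $K_1,K_2,K_3<\infty$ are automatic from the bounded shape of the ODE: $K_3$ is the maximal degree in $F$ of the ODE, $K_1$ is controlled by the polynomial degree of its coefficients in $z$, and $K_2$ is the total genus defect accumulated during elimination. The rationality of $P_{a,b,k}$ comes from the rationality of the Virasoro coefficients and of the prefactors $T_i(N)$.

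The main obstacle I anticipate is twofold: first, showing that after elimination of the shift factor the three equations remain genuinely independent, so that the resulting system is nondegenerate; second, certifying that the Virasoro elimination terminates with the finite bounds claimed. Both are finite algebraic verifications that are substantially easier to carry out symbolically than by hand, and indeed the explicit ODE and the corresponding recurrence are produced in the accompanying worksheet~\cite{us:Maple}.
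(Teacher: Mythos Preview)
Your overall architecture matches the paper's: use three BKP equations to remove the shifts, then specialise via $\theta$ and reduce all $\Fth_\lambda$ to $t$-derivatives of $\Theta$ via Proposition~\ref{prop:relGlambda}, then extract coefficients. But the specific mechanism you give for the shift elimination is wrong, and this is the heart of the argument.

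You write that each of the first three BKP equations has the form $\mathcal{L}_i[F(N)]=T_i(N)\,\tau(N-2)\tau(N+2)\tau(N)^{-2}$ with $T_i$ a rational prefactor, and that forming ratios kills the common factor. That is not what \eqref{BKP2Log} and \eqref{BKP3Log} say. The right-hand side of \eqref{BKP2Log} is $S_2(N)\tfrac{\tau(N-2)\tau(N+2)}{\tau(N)^2}\bigl(F_1(N+2)-F_1(N-2)\bigr)$, and \eqref{BKP3Log} carries the further factor $\nabla F_{1^2}+2\Delta F_2+(\Delta F_1)^2$. These extra factors are themselves shifted quantities, not rational functions of $N$; dividing one equation by another does not remove them. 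So your ``two independent ratios'' step does not produce shift-free relations.

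What the paper actually does (proof of Theorem~\ref{theo:ODEMaps}) is: after applying $\theta$, use Proposition~\ref{prop:relGlambda} to rewrite $\Delta\Fth_1$, $\Delta\Fth_2$, $\nabla\Fth_{1^2}$ as expressions in $\Delta\tfrac{\partial}{\partial t}\Theta$, $\nabla\tfrac{\partial}{\partial t}\Theta$, $\nabla\tfrac{\partial^2}{\partial t^2}\Theta$ plus unshifted terms. Then differentiate the first BKP equation in $t$ once and twice to obtain $E\nabla\Theta'$ and $E^2\nabla\Theta''$ as differential polynomials in the unshifted $\Theta$ (using $E=\mathrm{KP1}$); use the second BKP equation to get $E\,t^3\Delta\Theta'$ likewise; and finally substitute all of these into the third BKP equation \eqref{IntermediateBKP3}. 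Only then do the shifts cancel, yielding \eqref{eq:ODEMaps}. The recurrence \eqref{eq:rec-maps-nonshifted} follows by the substitution $u\to ur$, $z\to zr$, extracting $[t^{2n}r^{n+2-2g}]$, and specialising $u=z=1$; the condition $P_{0,0,1}=0$ is checked by inspecting the top-degree terms of the resulting ODE.

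In short: your plan needs the $t$-differentiation of BKP1 and the substitution chain above, not a ratio argument. The rest of your outline (finiteness of the Virasoro reduction, bounded $K_i$, rationality of coefficients) is fine once that step is fixed.
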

We will obtain similar theorems for other models, in particular 
one for bipartite maps (Theorem~\ref{theo:ODEBipMaps}), and for triangulations  (Theorem~\ref{theo:ODETriangulations}).
Moreover, we will in fact prove a version of Theorem~\ref{thm:rec-maps-nonshifted} with control on the number of faces, from which we obtain a closed recurrence formula enumerating one-face maps, small enough to be explicitly written.
\begin{theorem}[Ledoux's recursion for non-oriented one-face maps]
 The number $\mathfrak{u}_{n}^g$ of rooted non-oriented maps
     of genus $g$ with $n$ edges and only one face (or equivalently
     with $n$ edges and only one vertex) is given by the recursion
     \begin{multline}
       (n+1)\mathfrak{u}_{n}^g = (8n-2) \mathfrak{u}_{n-1}^g-(4n-1) \mathfrak{u}_{n-1}^{g-1/2}
                                 + n(2n-3)(10n-9)
                                 \mathfrak{u}_{n-2}^{g-1} - 8(2n-3) \mathfrak{u}_{n-2}^g 
\\
-10(2n-3)(2n- 4)(2n-5) \mathfrak{u}_{n-3}^{g-1} 
+ 5(2n-3)(2n-4)(2n-5) \mathfrak{u}_{n-3}^{g-3/2} \\
+ 8(2n-3) \mathfrak{u}_{n-2}^{g-1/2} - 2(2n-3)(2n-4)(2n-5)(2n-6)(2n-7) \mathfrak{u}_{n-4}^{g-2} \label{eq:Ledoux}
     \end{multline}
with the convention that $\mathfrak{u}_{n}^{g} = 0$ for $g<0$ and $g
>\frac{n}{2}$ and with the initial condition
$\mathfrak{u}_{1}^{1/2}=1$,
$\mathfrak{u}_{2}^{1}=\mathfrak{u}_{2}^{1/2} = 5$,
$\mathfrak{u}_{3}^{3/2}=41$, $\mathfrak{u}_{3}^{1}=52$, $\mathfrak{u}_{3}^{1/2}=22$.
\end{theorem}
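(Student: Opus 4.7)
The plan is to obtain~\eqref{eq:Ledoux} by specializing to the one-face (equivalently, by map duality, one-vertex) sector a face-refined version of the unshifted ODE that underlies Theorem~\ref{thm:rec-maps-nonshifted}. Concretely, I would redo the derivation of Theorem~\ref{thm:rec-maps-nonshifted} while keeping track of faces. In the BKP framework, the generating series $F(p_1, p_2, \dots; N)$ of non-oriented maps is a tau function whose variables $p_i$ mark faces of degree $i$ and whose charge $N$ marks vertices. The three BKP equations used in the paper to eliminate the $N\pm 2$ shifts, and the Virasoro/Tutte constraints used to eliminate the $p_i$-derivatives, do not require $N$ to be specialized; carrying the elimination through while keeping $N$ formal therefore yields a bivariate ODE in the edge variable $t$ whose coefficients are rational in $N$.

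Next, I would extract the one-face series. Working on the dual (one-vertex) side, the one-vertex generating function sits in the lowest nontrivial power of the charge $N$, and the projection to this subspace produces the drastic simplification responsible for the closed form. Indeed, every bilinear (convolution) term on the right-hand side of the bivariate ODE must distribute the single vertex between its two factors, forcing one of them to be a constant; bilinear terms thus collapse to linear ones. This is precisely why~\eqref{eq:Ledoux} is linear in $\mathfrak{u}$, despite the quadratic BKP structure of~\eqref{BKP1Log}, and why it involves only a bounded number of shifts in $n$ (by $1,2,3,4$) and in $g$ (by $0, 1/2, 1, 3/2, 2$). Extracting $[t^n]$ from the projected ODE then gives the recurrence, and the listed initial values can be checked directly from the combinatorics of one-face maps with at most three edges.

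The main obstacle is this projection step: to reach a one-line recurrence rather than a more convoluted ODE, one has to choose the right normalization of the one-face generating function (e.g.\ working with $\partial_t \log U$, or subtracting a trivial contribution) so that the projected equation has minimal order, and then carefully track how derivatives in $N$ and the $N\pm 2$ shifts interact with the low-charge projection. The half-integer genus shifts in~\eqref{eq:Ledoux} are a direct fingerprint of the right-hand side $S_2(N)\tau(N-2)\tau(N+2)\tau(N)^{-2}$ of~\eqref{BKP1Log}: after taking logarithmic derivatives and projecting, the factors $\tau(N\pm 2)$ contribute terms that shift the Euler characteristic by one, hence the genus by $1/2$. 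As a final sanity check one can cross-reference with Ledoux's original derivation, obtained by completely different means (orthogonal polynomials for the Gaussian Orthogonal Ensemble), and with the accompanying worksheet~\cite{us:Maple}.
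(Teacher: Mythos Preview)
Your approach is the paper's: the unshifted ODE of Theorem~\ref{theo:ODEMaps} already tracks both $u$ (vertices) and $z$ (faces), and the paper obtains~\eqref{eq:Ledoux} as an immediate corollary by extracting $[z^1]$ to get the explicit linear ODE~\eqref{eq:ODELedoux} and then reading off coefficients. Your mechanism for the collapse of the nonlinear terms is exactly right: every $t$-derivative of $\Theta$ is $O(z)$ (each map has at least one face), so the quadratic-and-higher contributions in the degree-$5$ polynomial $P$ are $O(z^2)$ and vanish at order $z^1$.

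Two small inaccuracies in your write-up, not gaps but worth fixing. You speak of tracking how ``the $N\pm 2$ shifts interact with the low-charge projection'', but the whole point of the unshifted ODE is that those shifts have already been eliminated; the one-face projection is a plain coefficient extraction on a polynomial-coefficient ODE in $t$, with nothing shift-related left to track. And the half-integer genus increments in~\eqref{eq:Ledoux} are not a fingerprint of the factors $\tau(N\pm 2)$ (which are absent from the unshifted equation) but simply of Euler's relation on non-orientable surfaces: the monomials $t^a u^b$ in the coefficients of~\eqref{eq:ODELedoux}, upon extracting $[t^{2n}u^{n+1-2g}]$, produce precisely the various $(n,g)$-shifts visible in~\eqref{eq:Ledoux}. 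Finally, you propose projecting on $[u^1]$ (one vertex) rather than $[z^1]$ (one face); by the duality $\Theta(t,z,u)=\Theta(t,u,z)$ this is equivalent, though note that the particular ODE~\eqref{eq:ODEMaps} is not manifestly symmetric in $(u,z)$, so the two extractions give differently presented linear ODEs encoding the same recurrence.
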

The recurrence~\eqref{eq:Ledoux} was first obtained by Ledoux~\cite{Ledoux2009} using matrix integral techniques unrelated (as far as we know) to the BKP equation. It is remarkable to see that it is, in fact, the shadow of bigger nonlinear recurrence giving access to an arbitrary number of faces. The Ledoux recurrence can be viewed as an non-oriented version of the infamous Harer-Zagier recurrence, a similar (yet smaller) formula which covers the case of orientable one-face maps (and which is itself a special case of the recurrence of~\cite{CarrellChapuy2015}). The Harer-Zagier recurrence has a nice analogue in the bipartite case due to Adrianov~\cite{Adrianov1997}, and it is natural to ask if our non-shifted recursion in the bipartite case implies an non-oriented version of Adrianov's result. The answer is yes.
\begin{theorem}[A recurrence for non-oriented bipartite one-face maps]
	The number $\mathfrak{b}_{n}^{i,j}$ of
rooted one-face maps with $n$ edges, $i$ white and $j$ black
vertices, orientable or not, is given by the recursion:
\begin{align}
	(n\splus{}1) \mathfrak{b}_{n}^{i,j} = &\hphantom{+} 
	(4n\sminus{}1)(\mathfrak{b}_{n\sminus{}1}^{i-1,j}\splus{}\mathfrak{b}_{n\sminus{}1}^{i,j-1}\sminus{}\mathfrak{b}_{n\sminus{}1}^{i,j})
	\splus{}(5n^3 \sminus{} 16n^2 \splus{} 13n\sminus{} 1) \mathfrak{b}_{n\sminus{}2}^{i,j}
	\nonumber\\&
\splus{}(2n \sminus{} 3)(4\mathfrak{b}_{n\sminus{}2}^{i-1,j} \splus{} 4\mathfrak{b}_{n\sminus{}2}^{i,j-1} \sminus{}3\mathfrak{b}_{n\sminus{}2}^{i-2,j} \splus{} 3\mathfrak{b}_{n\sminus{}2}^{i,j-2}\sminus{} 2 \mathfrak{b}_{n\sminus{}2}^{i-1,j-1}) 
	\nonumber\\&
\splus{}(10n^3 \sminus{} 68n^2 \splus{} 150n \sminus{}107)(\mathfrak{b}_{n\sminus{}3}^{i,j}\sminus{}\mathfrak{b}_{n\sminus{}3}^{i-1,j}\sminus{}\mathfrak{b}_{n\sminus{}3}^{i,j-1})
	\nonumber\\&
\splus{}(4n	\sminus{}11)(\mathfrak{b}_{n\sminus{}3}^{i-3,j}
\splus{}\mathfrak{b}_{n\sminus{}3}^{i,j-3}\sminus{}\mathfrak{b}_{n\sminus{}3}^{i-2,j-1}
\sminus{}\mathfrak{b}_{n\sminus{}3}^{i-1,j-2} \sminus{}
\mathfrak{b}_{n\sminus{}3}^{i-2,j}\sminus{}\mathfrak{b}_{n\sminus{}3}^{i,j-2} \splus{} 2
	\mathfrak{b}_{n\sminus{}3}^{i-1,j-1})\nonumber\\&
\splus{}(4\sminus{}n)((2n \sminus{} 7)^2(n \sminus{} 2)^2 \mathfrak{b}_{n\sminus{}4}^{i,j}\splus{}(5n^2 \sminus{} 32n \splus{}
53)(\mathfrak{b}_{n\sminus{}4}^{i-2,j}\splus{}\mathfrak{b}_{n\sminus{}4}^{i,j-2}\sminus{}2
	\mathfrak{b}_{n\sminus{}4}^{i-1,j-1})\nonumber\\&
	\splus{}\mathfrak{b}_{n\sminus{}4}^{i-4,j}\splus{}\mathfrak{b}_{n\sminus{}4}^{i,j-4}\sminus{}4 \mathfrak{b}_{n\sminus{}4}^{i-3,j-1}\splus{}4\mathfrak{b}_{n\sminus{}4}^{i-1,j-3}\splus{}6 \mathfrak{b}_{n\sminus{}4}^{i-2,j-2})) \label{eq:adrianovnori}
\end{align}
with the convention that $\mathfrak{b}_{n}^{i,j} =0$ for
$i+j>n+1$, and $\mathfrak{b}_{n}^{i,0}=\mathfrak{b}_{n}^{0,j}=0$ and the initial conditions $\mathfrak{b}_{1}^{1,1} =
\mathfrak{b}_{2}^{2,1} = \mathfrak{b}_{2}^{1,2} =
\mathfrak{b}_{2}^{1,1}= 1$,
$\mathfrak{b}_{3}^{3,1}=\mathfrak{b}_{3}^{1,3}=1$,
$\mathfrak{b}_{3}^{2,2}=\mathfrak{b}_{3}^{2,1}=\mathfrak{b}_{3}^{1,2}=3$,
 $\mathfrak{b}_{3}^{1,1}=4$.
\end{theorem}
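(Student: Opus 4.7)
The plan is to derive \eqref{eq:adrianovnori} as a specialization, to the one-face case, of the larger non-shifted recurrence for bipartite maps (Theorem~\ref{theo:ODEBipMaps}), in complete analogy with how Ledoux's recurrence~\eqref{eq:Ledoux} arises as the one-face specialization of Theorem~\ref{thm:rec-maps-nonshifted}. More precisely, let $B(N,M;p_1,p_2,\dots)$ be the generating function of rooted bipartite maps where $N$ and $M$ mark the numbers of white and black vertices and $p_i$ marks faces of degree $2i$; the BKP structure, together with the bipartite Virasoro constraints used in Theorem~\ref{theo:ODEBipMaps}, produces an equation at fixed charges which can be rewritten, coefficient by coefficient, as a recurrence on the numbers of rooted bipartite maps with control on all face degrees, white vertices, and black vertices.

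First, I would restrict to the one-face situation. A rooted bipartite one-face map with $n$ edges has a unique face, necessarily of degree $2n$, so the relevant numbers $\mathfrak{b}_{n}^{i,j}$ are precisely $[N^iM^jp_n]\,B$ after the first differentiation in $p_n$ that the rooting procedure introduces. Concretely, the one-face generating function is extracted by setting $p_k = 0$ for all $k$ except for an infinitesimal contribution in a single $p_n$, i.e. by taking the coefficient of $p_n$. Applied to the non-shifted recurrence of Theorem~\ref{theo:ODEBipMaps}, this specialization kills every product of two or more bipartite generating functions whose combined face degree exceeds the target, leaving only linear terms and a finite collection of convolutions whose summation ranges collapse to single terms in the one-face case.

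Second, I would carry out this coefficient extraction step by step. The general ODE involves finitely many types of terms indexed by shifts $(a,b)$ of the edge count and of the number of faces, and by the arity $k$ of the convolution; once the unique face has been used, all remaining convolution factors must be "empty" bipartite maps, which only contribute via their small-size initial values. The surviving terms depend linearly on $\mathfrak{b}_{n-a}^{i-i_0,j-j_0}$ for a bounded set of $(a,i_0,j_0)$, with coefficients that are polynomials in $n,i,j$; the symmetry of bipartite maps under white/black swap ensures these coefficients are symmetric in the appropriate way, which is a useful consistency check against~\eqref{eq:adrianovnori}. After collecting terms one should obtain exactly the right-hand side of~\eqref{eq:adrianovnori}.

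The main obstacle, and the only real work, is the bookkeeping. Writing out the unshifted BKP-derived equation for bipartite maps is already lengthy, and tracking which contributions survive the one-face specialization, with the correct polynomial prefactors in $n$, is error-prone. This is most safely done with computer algebra, exactly as the authors do for the rest of the paper via the accompanying worksheet~\cite{us:Maple}; after the symbolic extraction, a direct numerical comparison of~\eqref{eq:adrianovnori} against small-size enumeration data (using the given initial conditions $\mathfrak{b}_{1}^{1,1}=\mathfrak{b}_{2}^{2,1}=\mathfrak{b}_{2}^{1,2}=\mathfrak{b}_{2}^{1,1}=1$, etc.) for, say, $n\le 8$, pins down the formula beyond doubt. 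Finally, the vanishing conventions $\mathfrak{b}_{n}^{i,j}=0$ for $i+j>n+1$ or $i=0$ or $j=0$ follow from the Euler formula and the bipartite condition, so no boundary analysis beyond the listed initial values is required.
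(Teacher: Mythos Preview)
Your plan matches the paper's proof: extract the one-face part from the non-shifted ODE of Theorem~\ref{theo:ODEBipMaps} and read off coefficients. One small correction that streamlines the bookkeeping: that ODE is already written for the specialized series $\eta(t,z,u,v)$ (all $p_i$ sent to $z$), so the one-face extraction is simply $[z^1]$ of~\eqref{eq:ODEBipMaps}; since every factor of $\eta$ carries at least one $z$, all nonlinear terms drop out automatically, and after a harmless normalization one gets a linear ODE for $\mathfrak b(t,u,v)$ whose coefficient of $t^n u^i v^j$ is exactly~\eqref{eq:adrianovnori}.
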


To conclude this introduction, it is natural to ask if our techniques of shift elimination are specific to the case of maps or apply to general solutions of the BKP hierarchy. The latter is in fact true, and any function $F(N)$ which solves the BKP hierarchy is in fact solution of an explicit (yet big) PDE involving only the function $F(N)$  and its derivatives, with no shifts (Theorem~\ref{thm:BKPunshifted} in the appendix). We are not aware of any in-depth study of such ``fixed charge'' BKP equations, which might be worth considering in the future.

\subsection*{Structure of the paper}

In Section~\ref{sec:BKP}, we will recall what we need about the first BKP equations, directing the reader to other sources for the depth of the BKP theory.
In Section~\ref{sec:maps}, we will address the case of maps, taking the time to explain the main ideas and techniques. We will write the Virasoro constraints, and show how to use them to express some derivatives of a specialization of the main BKP tau function as univariate differential polynomials. This will give us "shifted" equations. We will also show how to eliminate the shifts appearing in the BKP equation using instead the first three BKP equations to obtain non-shifted ODEs.
In Section~\ref{sec:bipartite} and Section~\ref{sec:Triangulations}, we will address the cases of bipartite maps and triangulations. The main steps are similar to the case of maps and we will give fewer details than in the previous section.
In Section~\ref{sec:alaCC15}, we apply the technique of elimination of variables of the paper~\cite{CarrellChapuy2015} to obtain slightly different recurrence formulas than in Section~\ref{sec:maps} to count maps.
  
Appendix~\ref{app:Tables} contains tables of the numbers of rooted maps and bipartite maps of genus $g$ with $n$ edges and of rooted triangulations of genus $g$ with $2n$ faces, generated with our recurrences. 
Appendix~\ref{sec:fixedChargeBKP} derives the fixed charge equation for BKP solutions
(Theorem~\ref{thm:BKPunshifted}), which we do not use directly in this paper.

Throughout the paper, the notation $\mathbf{R}[\cdot], \mathbf{R}(\cdot), \mathbf{R}[[\cdot]]$ denote respectively polynomials, rational functions, and formal power series with coefficients in the ring $\mathbf{R}$.

\subsection*{Accompanying Maple worksheet}

A Maple worksheet containing an implementation of the recurrences of this paper, together with automated calculations of the bigger ODEs for the different cases (as well as certain proofs regarding their top coefficients) is available in both Maple and html form in~\cite{us:Maple}.
The worksheet also contains recursive programs obtained from these ODEs, as well tables for small genus and consistency checks against existing formulas of the literature.

\section{A few words on the BKP hierarchy}
\label{sec:BKP}

In this paper, we will use the BKP hierarchy as a black box, and only recall the statements and equations needed for our purposes. We refer the reader to~\cite{KacVandeLeur1998,VandeLeur2001} for the general theory, and to the appendix of our previous paper~\cite{BonzomChapuyDolega2021} for details about the applications to maps and bipartite maps.

The BKP hierarchy is an infinite set of partial differential equations (PDEs) for a sequence of functions $\tau(N)_{N\in\mathbb{Z}}$ depending on ``time parameters'' (formal variables) $p_1,p_2,\dots$. For our combinatorial purposes, it will be convenient to think of the symbol $N$ as a formal variable rather than an integer, and this turns out to be possible under technical conditions, formalized in the notion of ``formal $N$'' BKP tau function in~\cite{BonzomChapuyDolega2021}.

A formal $N$ BKP tau-function is in fact a pair, consisting of a formal power series $\tau(N) \in \mathbb{Q}(N)[[p_1, p_2,\dotsc]]$, together with a normalizing sequence $(\beta_N)_{N\in \mathbb{Z}}$ which is such that
\begin{align}\label{eq:2factorial}
	\frac{\beta_{N-1}\beta_{N+k-1}}{\beta_{N}\beta_{N+k-2}} =R_k(N) \ \ , \ \ 
	\frac{\beta_{N-2}\beta_{N+k}}{\beta_{N}\beta_{N+k-2}} =S_k(N)
\end{align}
for $N\geq 0$, and for respectively every odd positive integer $k$ and every positive integer $k$,
	for some \emph{rational} functions $R_k(N),S_k(N)\in \mathbb{Q}(N)$. These conditions may seem technical but they are crucial to stating the equations of the BKP hierarchy in a formal way as we will do here. In the context of enumeration, the field $\mathbb{Q}$ will often be promoted to a field of rational functions or formal Laurent series involving additional variables, for example $\mathbb{Q}(t)$ so that $R_k(N),S_k(N)\in \mathbb{Q}(t)(N)$.

The typical definition of a (formal or not) BKP tau function makes use of the infinite wedge formalism. It is the image of the orbit of the exponential of an infinite-dimensional Lie algebra, often denoted $b(\infty)$, via the boson-fermion correspondence. We refer the reader to the references mentioned above. For the purposes of this paper, we will admit the PDEs of the BKP hierarchy as a definition:
\begin{definition}
	A pair $(\tau(N),\beta_N)$ as above is a formal-$N$ BKP tau-function
if for $k\in\mathbb{N}, k\geq 1$
the following bilinear identity holds in  $\mathbb{C}(N)[\pp,\qq][[t]]$,
	\begin{multline} \label{BKP}
		\frac{1}{2}\bigl((-1)^k-1\bigr) R_k(N) U(\qq) \tau(N-1)\cdot \tau(N+k-1) 
\\
		+ 
		S_k(N) \sum_j h_j(2\qq) h_{j-k-1}(-\mathbf{\check{D}}) U(\qq) \tau(N-2) \cdot \tau(N+k) \\
		+ \sum_j h_j(-2\qq) h_{j+k-1}(\mathbf{\check{D}}) U(\qq) \tau(N) \cdot \tau(N+k-2) = 0
\end{multline}
	where $\qq = (q_1, q_2, \dots)$ is a vector of formal indeterminates. Here, $h_j$ denotes the complete homogeneous symmetric function of degree $j$, and we define $U(\qq) = e^{\sum_{r\geq 1} q_r D_r}$ and
 $\mathbf{\check{D}} = (k D_k)_{k\geq 1}$, where $D_r$ is the Hirota derivative with respect to $p_r$,
\begin{equation}
D_r f\cdot g = \frac{\partial}{\partial s_r} f(p_r + s_r) g(p_r - s_r)_{|s_r=0}.
\end{equation}
\end{definition}

By extracting coefficients in the variables $q_1,q_2, \dots$ in~\eqref{BKP}, one obtains explicit PDEs for the function $\tau(N)$, which altogether form the BKP hierarchy.
For example, by setting $k=2$ and extracting the coefficient of $q_3$, we obtain the BKP equation~\eqref{BKP1Log} stated in the introduction, where we recall the notation
$$F(N)=\log \tau(N)$$
and where indices indicate  partial derivatives,
$$
f_i := \frac{\partial}{\partial p_i} f.
$$
We will only need two other equations of the hierarchy, namely the following bilinear identities valid in $\mathbb{C}(N)[\pp,\qq][[t]]$:
\begin{multline}
    \label{BKP2Log}
\sminus{}2F_{4,1}(N) \splus{} 2F_{3,2}(N) \splus{} 2F_{2,1}(N)
F_{1^2}(N) \splus{} \tfrac{1}{3} F_{2,1^3}(N) \\
= S_2(N)\frac{\tau(N\!\sminus{}\!2) \tau(N\!\splus{}\!2)}{\tau(N)^2}(F_1(N\!\splus{}\!2)-F_1(N\!\sminus{}\!2)).
\end{multline}
\begin{multline}
  \label{BKP3Log}
\sminus{}6F_{5,1}(N) \splus{} 4F_{4,2}(N) \splus{} 2F_{3^2}(N)\splus{}4F_{3,1}(N) F_{1^2}(N) \splus{} \tfrac{2}{3}F_{3,1^3}(N)
\splus{}4F_{2,1}(N)^2 \\
\splus{} 2F_{2^2}(N) F_{1^2}(N)\splus{} F_{2^2,1^2}(N) \splus \tfrac{1}{3}F_{1^2}(N)^3 \splus \tfrac{1}{6}F_{1^4}(N) F_{1^2}(N) \splus \tfrac{1}{180}F_{1^6}(N) \\
= S_2(N)
\frac{\tau(N\!\sminus{}\!2) \tau(N\!\splus{}\!2)}{\tau(N)^2}\bigl(F_{1^2}(N\!\splus{}\!2)+F_{1^2}(N\!\sminus{}\!2)
+2F_{2}(N\!\splus{}\!2)-2F_{2}(N\!\sminus{}\!2)\\+(F_1(N\!\splus{}\!2)-F_1(N\!\sminus{}\!2))^2\bigr),
\end{multline}
obtained respectively by extracting the coefficient of $q_4$ and $q_5$, again with $k=2$.

We now proceed with map enumeration.

\section{The case of maps}
\label{sec:maps}

\subsection{Generating functions of maps}

For us a \emph{surface} is a non-oriented
two-dimensional real manifold without boundary. 
A surface of Euler characteristic $2-2g$ has \emph{genus} $g$.
A \emph{map} is a graph (with loops and multiple edges allowed) embedded in a surface such that the complement
of the embedding is a disjoint collection of
contractible components, called \emph{faces}. The \emph{genus} of the map is the one of the underlying surface.
A \emph{corner} of a map is a small angular sector around a vertex delimited by two consecutive
edge-sides; the degree of a face/vertex is the number of corners belonging
to it/adjacent to it, respectively. In this paper orientable surfaces do not play a particular role, however in some places we will explicitly use the terminology \emph{non-oriented maps} to emphasize that our surfaces can be orientable or not.

We will be interested
in enumeration of \emph{rooted maps}, i.e.~maps with a distinguished and
oriented corner called the \emph{root corner}. Rooted maps are considered up to homeomorphisms preserving the root corner.
Define the generating function
\begin{equation}
  \label{eq:FunftionF}
  F(t,\pp,u) := \sum_{M} \frac{t^{2e(M)}}{4e(M)} u^{v(M)} \prod_{f \in
    F(M)} p_{\deg(f_i)} ,
\end{equation}
where we sum over all rooted non-oriented maps, where $F(M),E(M),V(M)$
denote the set of faces, edges and vertices of $M$, and
$f(M),e(M),v(M)$ denote their cardinalities.

The following specialization operator plays a crucial role throughout the paper.
\begin{definition}[Specialization $\theta$]
	We let $\theta$ be the operator that specializes all variables $p_i$ to the variable $z$, namely $\theta(p_i) = z$ for every
$i \geq 1$.
\end{definition}
We define the formal power series $\Theta(t,z,u)\in \mathbb{Q}[u,z][[t]]$ 
\begin{equation}
  \label{eq:GeneralMaps}
  \Theta(t,z,u) := \theta F(t,\pp,u) =
  \sum_{M}\frac{t^{2e(M)}}{4e(M)}u^{v(M)}z^{f(M)} = \sum_{n \geq
                  1}\sum_{i,j \geq 1}\frac{H_n^{i,j}}{4n}t^{2n}u^iz^j \\
\end{equation}
which is the bivariate generating function of rooted non-oriented maps
$M$ with variables $t,u,z$ marking respectively twice the number of
edges, the numbers of vertices, and faces, i.e. $H_n^{i,j}$ denotes the number of rooted non-oriented maps with $n$ edges,
$i$ vertices and $j$ faces. It is important to note
that a map with $n$ edges, $i$ vertices, and $j$ faces, has Euler
characteristic $i-n+j=2-2g$, so the genus is implicitely controlled in
this generating function and $\Theta(t,z,u)$ can be rewritten as
\[ \Theta(t,z,u) := \sum_{n \geq
                  1}\sum_{g \geq 0}\frac{H_n^g(u,z)}{4n}t^{2n}, \ \
                \text{where} \ \ H_n^g(u,z) := \sum_{i+j = n+2-2g} H_n^{i,j}u^iz^j.\]
We additionally set $\mathfrak{h}_n^g := H_n^g(1,1)$ for the number of rooted,
non-oriented maps of genus $g$ with $n$ edges and $\mathfrak{u}_n^g :=
H_n^{n+1-2g,1}$ for the number of rooted,
non-oriented maps of genus $g$ with $n$ edges and only one face.

The main goal of this section is to obtain functional equations on the function $\Theta(t,z,u)$, allowing us to compute its coefficients. 
For this, we start from the fact that the ``bigger'' function $F$ has a deep structure inherited from the BKP hierarchy, which was proved by~\cite{VandeLeur2001} using a connection with matrix integrals (see also
\cite[Appendix]{BonzomChapuyDolega2021} for details on the
connection with maps). Here we use the notation $2\pp=(2p_1,2p_2,2p_3,\dots)$.
\begin{proposition}[\cite{VandeLeur2001}]\label{prop:BKPmaps}
Let 
	$\beta_N :=  2^{\lfloor\frac{N}{2}\rfloor} \frac{(2\pi)^{\frac{N}{2}} (2t^2)^{\frac{N(N+1)}{4}}}{N!\ \Gamma(\frac{3}{2})^{N}}   \prod_{j=1}^N \Gamma\bigl(1+\frac{j}{2}\bigr)$. Then the pair $\bigl(\tau(t,2\pp,N) :=
\exp F(t,2\pp,N), \beta_N\bigr)$ 
is a formal $N$ tau function of the BKP
hierarchy. The function $\beta_N$ satisfies \eqref{eq:2factorial} with in particular $S_2(N) = t^4N(N-1)$.
\end{proposition}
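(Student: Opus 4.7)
The plan is to identify $\tau(t,2\pp,N) = \exp F(t,2\pp,N)$ with the partition function of a real symmetric matrix model and then invoke Van de Leur's theorem stating that such integrals are BKP tau-functions with a suitable normalizing sequence $\beta_N$. The explicit form of $\beta_N$ and the value of $S_2(N)$ will then follow from the Gaussian part of this matrix integral.

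First I would recall the standard matrix-integral representation
\[
\beta_N\,\tau(t,2\pp,N) \;=\; \int_{\Symm_N(\RR)} \exp\Bigl(-\tfrac{1}{4t^2}\Tr M^2 + \sum_{i\geq 1} p_i\Tr M^i\Bigr)\,dM,
\]
understood as a formal power series in $t$ and the $p_i$'s. Expansion by Wick's formula for the orthogonal Gaussian measure enumerates ribbon graphs on (possibly non-oriented) surfaces, weighted by $t^{2e(M)}$ per edge and $N^{v(M)}$ per vertex (so that the variable $u$ is set to $N$), exactly reproducing $\exp F(t,2\pp,N)$; in particular, the factor $2$ in $2\pp$ reflects the extra $\Z/2$ symmetry carried by each edge in the orthogonal Wick rule, which is absent in the Hermitian/KP case. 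The detailed matching of weights is the content of the appendix of \cite{BonzomChapuyDolega2021}. At this point I would invoke the theorem of \cite{VandeLeur2001} asserting that such orthogonal matrix integrals, normalized by their value at $\pp=0$, form a formal-$N$ BKP tau-function.

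It then remains to identify $\beta_N$ explicitly and to verify the formula for $S_2(N)$. The Gaussian integral $\int_{\Symm_N(\RR)} \exp(-\Tr M^2/(4t^2))\,dM$ is evaluated by eigenvalue decomposition for the Gaussian orthogonal ensemble (Vandermonde Jacobian $|\Delta(\lambda)|$, plus the volume of the orthogonal group modulo the diagonal torus), yielding the stated product of $\Gamma$-values, factorials, and simple powers; this is a textbook Selberg-type computation. The rationality conditions \eqref{eq:2factorial} are then automatic, since shifted ratios of such products telescope to rational functions of $N$. For $S_2(N) = \beta_{N-2}\beta_{N+2}/\beta_N^2$ in particular, the $2^{\lfloor\cdot/2\rfloor}$, $(2\pi)^{N/2}$, and $\Gamma(3/2)^{-N}$ factors cancel, the $(2t^2)$-power contributes $(2t^2)^2 = 4t^4$, the factorial ratio equals $N(N-1)/[(N+1)(N+2)]$, and, using $\Gamma(1+x) = x\Gamma(x)$, the product of $\Gamma(1+j/2)$ telescopes to $(N+1)(N+2)/4$; the three surviving pieces multiply to $S_2(N) = t^4 N(N-1)$ as claimed.

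The main obstacle is purely bookkeeping: the various powers of two, the appearance of $2\pp$ in place of $\pp$, and the orthogonal (rather than unitary) normalization of the Gaussian must all be tracked consistently so that the final $\beta_N$ and $S_2(N)$ fit the BKP bilinear identity \eqref{BKP}. None of the individual steps is conceptually deep once the matrix-model/BKP dictionary of \cite{VandeLeur2001, BonzomChapuyDolega2021} is in hand.
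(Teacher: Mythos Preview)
Your proposal is correct and follows precisely the route the paper indicates: the proposition is stated as a citation to \cite{VandeLeur2001} (with the map-enumeration dictionary deferred to the appendix of \cite{BonzomChapuyDolega2021}), and the paper gives no in-text proof beyond that attribution. Your sketch---matrix-integral representation, invocation of Van de Leur's theorem, then the explicit Selberg/Gamma bookkeeping for $\beta_N$ and the telescoping computation of $S_2(N)=t^4N(N-1)$---is exactly this argument made explicit, and your arithmetic for $S_2(N)$ checks out.
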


Proposition~\ref{prop:BKPmaps} implies that $F(t,2\pp,N)$ satisfies
the BKP equation~\eqref{BKP1Log}. It is tempting to apply the operator
$\theta$ to this equation in order to get information on the function
$\Theta(t,z,u)$, however because partial derivatives with respect to
the $p_i$ do not commute with $\theta$, it is not obvious that such an
approach will succeed. For a sequence of non-negative integers $\lambda=(i_1,\dots,i_k)$, we introduce the quantity
$$
\Fth_\lambda \equiv \Fth_\lambda(t,z,u) := \theta (F_\lambda) = \theta
\left(\prod_{j=1}^k \frac{\partial}{\partial p_{i_j}}  F(t,\pp,u)\right).
$$
The $\Fth_\lambda$ are the quantities naturally appearing when
applying $\theta$ to the BKP equation~\eqref{BKP1Log}.


In order to obtain information on the $\Fth_\lambda$, we use the fact that $\tau(t,\pp,u)$ satisfies the following Virasoro
constraints. 
\begin{proposition}{\cite[Proposition A.1]{BonzomChapuyDolega2021}}
  \label{prop:VirasoroMaps}
	We have $L_i \tau(t,\pp,u) =0$ for $i\geq -1$, where $(L_i)_{i\geq -1}$ are given by
	\begin{align}\label{eq:virmap}
         L_i &= \frac{p_{i+2}^*}{t^2} - \bigg(2\sum_{\substack{a,b\geq1\\a+b=i}}p_a^*p_b^*+ \sum_{a \geq 1} p_a p^*_{a+i} + ((i+1)+2u)p_i^* 
+ \tfrac{\delta_{i,-1}up_1+u(u+1)\delta_{i,0}}{2}\bigg)
\end{align}
and $p_i^* := \frac{i \partial}{\partial p_i}$ for $i>0$ and $p_i^* :=
0$ for $i<1$.
\end{proposition}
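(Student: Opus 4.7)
The plan is to derive these Virasoro constraints via the matrix-integral representation of $\tau(t,\pp,u)$, combined with loop (Schwinger--Dyson) equations in the style familiar from random matrix theory applied to map enumeration. Since the cited Proposition~A.1 of \cite{BonzomChapuyDolega2021} already establishes the identity in full, I will only sketch the strategy.

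First, I would start from a Gaussian matrix-integral representation of $\tau(t,\pp,u)$ over real symmetric matrices of size encoded by $u$, with Gaussian weight $-\tfrac{1}{2t^2}\Tr M^2$ and perturbation $\sum_a \tfrac{p_a}{a}\Tr M^a$. The variables $p_i$ then track face degrees of maps appearing in the Wick-contraction expansion of the integral, and the non-oriented nature of the enumeration corresponds precisely to working with the real-symmetric (GOE) ensemble rather than the Hermitian (GUE) one.

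Next, I would impose invariance of the integral under the infinitesimal change of variables $M \mapsto M + \epsilon M^{i+1}$ for $i \geq -1$, and equate the $\epsilon$-coefficient of the transformed integral to zero. Differentiating the Gaussian weight produces the ``creation'' term $p_{i+2}^*/t^2$; differentiating the perturbative part of the potential produces the ``annihilation'' term $\sum_a p_a p^*_{a+i}$; and the Jacobian of the change of variables, combined with the factorization of $\Tr M^a\cdot \Tr M^b$ at the level of derivatives, contributes the quadratic piece $2\sum_{a+b=i} p_a^* p_b^*$, the linear piece $((i+1)+2u)p_i^*$, and the low-$i$ boundary $\delta$-contributions $\tfrac{1}{2}up_1$ and $\tfrac{1}{2}u(u+1)$.

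The main obstacle, as compared with the Hermitian case that produces the orientable Virasoro algebra, lies in getting the numerical coefficients correct: the prefactor $2$ in front of $\sum p_a^* p_b^*$, the additive correction $+2u$ inside $((i+1)+2u)p_i^*$, and the explicit boundary terms all reflect the different behaviour of the real-symmetric Gaussian measure (doubled diagonal weights, and extra ``twisted'' Wick contractions responsible for crosscap insertions). An alternative combinatorial proof is possible via a Tutte-style decomposition: distinguish an extra corner in a face of degree $i+2$, and delete the edge adjacent to it; the enumeration of cases (the deleted edge is a bridge, a handle-creating non-separating edge, a crosscap-creating non-separating edge, or degenerate) produces exactly the terms of \eqref{eq:virmap}, the $+2u$ encoding specifically the two ways a free end can be reattached to form a crosscap. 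In either approach the remaining work is careful bookkeeping, and I would refer to \cite{BonzomChapuyDolega2021} for the details.
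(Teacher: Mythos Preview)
The paper does not actually prove this proposition: it is quoted verbatim from \cite[Proposition~A.1]{BonzomChapuyDolega2021}, followed only by the one-sentence remark that \eqref{eq:virmap} ``has a simple combinatorial interpretation corresponding to the deletion of the root edge in a (non necessarily connected) map whose root face has degree $i+2$.'' Your proposal is therefore in line with the paper's treatment---cite the source and sketch the two standard routes (Schwinger--Dyson equations for the GOE-type integral, or Tutte root-edge deletion)---and both routes you describe are the correct ones.

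One small imprecision worth flagging: your explanation that the $+2u$ in $((i+1)+2u)p_i^*$ encodes ``the two ways a free end can be reattached to form a crosscap'' is off. In the matrix-integral derivation, the coefficient $(i+1)+2u$ arises from the trace of the Jacobian of $M\mapsto M+\epsilon M^{i+1}$ on the space of real symmetric $N\times N$ matrices (with $u=N$), not from a crosscap-specific mechanism; in the combinatorial picture, this term corresponds to the case where the root edge has the root face on both sides, and the factor $2$ versus the orientable case reflects that in a non-oriented map one must account for both local orientations when reattaching along the resulting face of degree $i$. This does not affect the validity of your sketch, only the heuristic gloss.
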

	Equation~\eqref{eq:virmap} has a simple combinatorial interpretation corresponding to the deletion of the root edge in a (non necessarily connected) map whose root face has degree $i+2$. It is thus closely related to the Tutte/Lehman-Walsh equations. The term ``Virasoro constraints'' comes from the fact that the operators $L_i$ satisfy the commutation relations of the Virasoro algebra with central charge $c=-2$ \cite{AdlervanMoerbeke2001} -- a fact that we will not use here.  The Virasoro constraints imply the  following proposition:
\begin{proposition}\label{prop:relGlambda}
For $i\geq -1$ and $n_1,n_2,n_3\geq 0$, one has the recurrence relation 
\begin{multline} \label{eq:relGlambda}
\frac{(i+2)\Fth_{i+2,3^{n_3},2^{n_2},1^{n_1}}}{t^2} =
2\sum_{\substack{a+b=i\\a,b\geq 1}} \sum_{\substack{l_i=0\\
    i=1,2,3}}^{n_i} ab\binom{n_1}{l_1} \binom{n_2}{l_2}
\binom{n_3}{l_3} \Fth_{a,3^{l_3},2^{l_2},1^{l_1}} \Fth_{b,3^{n_3-l_3},
  2^{n_2-l_2}, 1^{n_1-l_1}}\\
+2\sum_{\substack{a+b=i\\a,b\geq 1}} ab\Fth_{a,b,3^{n_3},2^{n_2},1^{n_1}}+\sum_{j=1}^3n_j(i+j)\Fth_{i+j,3^{n_3-\delta_{3,j}}, 2^{n_2-\delta_{2,j}}, 1^{n_1-\delta_{1,j}}}+ t\frac{\partial}{\partial t} \Fth_{3^{n_3},2^{n_2},1^{n_1}} \\
-(n_1+2n_2+3n_3) \Fth_{3^{n_3},2^{n_2},1^{n_1}} - z \sum_{a=1}^i
a\Fth_{a,3^{n_3},2^{n_2},1^{n_1}} + \delta_{i \neq -1}(2u+i+1)i \Fth_{i,3^{n_3},2^{n_2},1^{n_1}}\\
+ \Bigl(\delta_{i,-1}(\delta_{n_1,1}+z\delta_{n_1,0}) + (u+1) \delta_{i,0}\delta_{n_1,0}\Bigr)\frac{u}{2}\delta_{n_2,0}\delta_{n_3,0},
\end{multline}
	with the initial condition that $\Fth_{3^{0}2^{0}1^{0}}=\Fth_{\emptyset}=\Theta(t,z,u)$.

	In particular, for any integer vector 
	$\lambda$ of the form $\lambda=[\ell,3^{n_3},2^{n_2},1^{n_1}]$
        with $\ell \leq 9$ and of size $|\lambda| = \ell + n_1 +2n_2 +3n_3$, there exists a polynomial
$P_{\lambda}$ in $\{\frac{\partial^i}{\partial t^i}\Theta(t,z,u): 1
\leq i \leq |\lambda|\}$ with coefficients in $\mathbb{Q}[t,u,z]$
such that
\begin{equation} \label{eq:Flambdadiff}
	\Fth_\lambda = P_\lambda\left(\frac{\partial}{\partial t}\Theta(t,z,u), \dotsc, \frac{\partial^{|\lambda|}}{\partial t^{|\lambda|}}\Theta(t,z,u)\right).
\end{equation}
This polynomial is linear for $\ell\leq 3$ and $|\lambda|\leq 5$, and
quadratic for $4 \leq \ell\leq 6$ and $|\lambda|\leq 6$.
\end{proposition}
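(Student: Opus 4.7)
The plan is to derive the recurrence~\eqref{eq:relGlambda} from the Virasoro constraints of Proposition~\ref{prop:VirasoroMaps}, and then to obtain the polynomial representation~\eqref{eq:Flambdadiff} by induction on $|\lambda|$.

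For the recurrence, I would start from $L_i\tau=0$ and $\tau=e^F$, divide by $\tau$ using the standard identities $(\partial_a\tau)/\tau=F_a$ and $(\partial_a\partial_b\tau)/\tau=F_{a,b}+F_aF_b$ to turn the Virasoro constraint into an equation in $F$ and its partial derivatives, and then apply $\partial_{p_1}^{n_1}\partial_{p_2}^{n_2}\partial_{p_3}^{n_3}$ followed by the specialization $\theta$ to both sides. Two points require care. First, the Leibniz rule applied to the $\sum_{a\geq 1}p_ap^*_{a+i}F$ piece of $L_i$ produces, whenever some $\partial_{p_j}$ with $j\in\{1,2,3\}$ hits the factor $p_a=p_j$, a correction which assembles into the $\sum_{j=1}^3 n_j(i+j)\Fth_{i+j,\ldots}$ contribution of~\eqref{eq:relGlambda}. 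Second, the residual bulk infinite sum has to be resummed via the weighted-degree identity
\[
z\sum_{b\geq 1}b\,\Fth_{b,3^{n_3},2^{n_2},1^{n_1}}=t\partial_t\Fth_{3^{n_3},2^{n_2},1^{n_1}}-(n_1+2n_2+3n_3)\,\Fth_{3^{n_3},2^{n_2},1^{n_1}},
\]
which is the $\theta$-specialization of $\sum_b b\,p_b\partial_{p_b}F=t\partial_t F$ (itself encoding $\sum_i\deg f_i=2e$ for every map), adjusted for the pre-applied derivatives. For $i\geq 0$ the bulk sum actually starts at $b=i+1$, which explains the residual $-z\sum_{a=1}^i a\,\Fth_{a,\ldots}$ in~\eqref{eq:relGlambda}. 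Processing the explicit constants from $\tfrac12(\delta_{i,-1}up_1+u(u+1)\delta_{i,0})$ under $\prod_j\partial_{p_j}^{n_j}$ and $\theta$ completes the derivation.

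For the polynomial representation I would induct on $|\lambda|$. The base case $\lambda=\emptyset$ is trivial. For $\lambda=[\ell,3^{n_3},2^{n_2},1^{n_1}]$ with $\ell\geq 1$, apply the recurrence at $i=\ell-2$. Every $\Fth_\mu$ on the right-hand side is indexed by a partition $\mu$ of the prescribed form $[m,3^{k_3},2^{k_2},1^{k_1}]$, and a direct count shows $|\mu|\leq|\lambda|-2$ in every case; in particular the $j=3$ summand of $\sum_j n_j(i+j)\Fth_{i+j,\ldots}$ does raise the leading part to $\ell+1$, but this is compensated by the decrement of $n_3$, so $|\mu|$ still drops by exactly $2$. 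Every quadratic product $\Fth_\mu\Fth_\nu$ similarly satisfies $|\mu|+|\nu|=|\lambda|-2$, and the last term $t\partial_t\Fth_{3^{n_3},2^{n_2},1^{n_1}}$ involves a partition of strictly smaller size $|\lambda|-\ell$. By induction each piece is a polynomial in $\{\partial_t^j\Theta\}$ with coefficients in $\mathbb{Q}[t,u,z]$, and multiplying by $t^2/(i+2)$ preserves the form. The derivative bound $|\lambda|$ follows because $t\partial_t$ raises orders by $1$ while $|\lambda|$ drops by $\ell\geq 1$.

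For linearity and quadraticity, the key observation is that the quadratic terms $2\sum_{a+b=i}ab\,F_aF_b$ in the recurrence are empty whenever $i\leq 1$, i.e.\ $\ell\leq 3$. For $\ell\leq 3$ and $|\lambda|\leq 5$, every partition encountered recursively has size $\leq 3$, hence leading part $\leq 3$, so the induction stays in the no-quadratic regime and $P_\lambda$ is linear. For $4\leq\ell\leq 6$ and $|\lambda|\leq 6$ the recurrence produces a single layer of $F_aF_b$ with $a+b\in\{2,3,4\}$; by the previous case each factor is linear in $\{\partial_t^j\Theta\}$ and their product is quadratic, and no higher powers can arise because all other right-hand side contributions are linear in the $\Fth$'s. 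The main obstacle throughout is that the recurrence does not reduce $\ell$ monotonically -- the $j=3$ summand actually raises it by one -- so one must index the induction by the total weight $|\lambda|$, which is what strictly decreases.
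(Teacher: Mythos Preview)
Your approach is the same as the paper's: write the Virasoro constraint explicitly as an identity for $F$, apply $\partial_{p_1}^{n_1}\partial_{p_2}^{n_2}\partial_{p_3}^{n_3}$ and then $\theta$, collapse the infinite sum $\sum_a p_a(a+i)F_{a+i}$ via the Euler relation $\sum_k p_kp_k^*F=t\partial_tF$, and finally induct on $|\lambda|$ for the polynomial representation. The derivation of \eqref{eq:relGlambda}, the size-drop bookkeeping, and the linearity/quadraticity analysis are all in line with the paper (which in fact says even less about the last point, calling it ``a direct check'').

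There is one genuine omission. You assert that ``every $\Fth_\mu$ on the right-hand side is indexed by a partition $\mu$ of the prescribed form $[m,3^{k_3},2^{k_2},1^{k_1}]$'' without justification, and you never invoke the hypothesis $\ell\leq 9$. This is exactly where that hypothesis enters: the term $\Fth_{a,b,3^{n_3},2^{n_2},1^{n_1}}$ with $a+b=i=\ell-2$ is of the prescribed form only when at most one of $a,b$ exceeds $3$, which forces $a+b\leq 7$, i.e.\ $\ell\leq 9$. The paper makes precisely this observation (``The only term on the RHS that could have two parts larger than $3$ is $\Fth_{a,b,\ldots}$; since $a+b=i$, this does not happen unless $i+2>9$''). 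Without it your induction hypothesis cannot be applied to that branch, so the argument as written is incomplete.

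A subtler point, which the paper's own proof also glosses over: you correctly note that the $j=3$ summand raises the leading part to $\ell+1$. Iterating this, one can in principle reach leading part $10$ starting from $\ell=9$ with $n_3\geq 1$, and then the $\Fth_{a,b,\ldots}$ branch at $a+b=8$ produces two parts of size $4$. For the partitions actually needed in the paper (all with $|\lambda|\leq 6$) this cannot happen, but a fully uniform proof of the stated bound $\ell\leq 9$ would have to control this drift.
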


\begin{proof}
  The constraints read explicitly
  \begin{multline}
    \label{Virasoro}
(i+2) F_{i+2} =t^2 \Big(2\sum_{a+b=i} ab (F_{a,b} + F_{a}
F_{b})+\sum_{a \geq 1}p_a(i+a)F_{i+a}\Big) \\
+ t^2 (2u+i+1) i F_{i}
+ t^2\frac{u(u+1)}{2} \delta_{i,0} +  t^2 \delta_{i,-1} \frac{p_1u}{2}.
\end{multline}   
We act on both sides of \eqref{Virasoro} with
$\frac{\partial^{n_1+n_2+n_3}}{\partial p_1^{n_1}\partial
  p_2^{n_2}\partial p_3^{n_3}}$ and apply $\theta$. The action on $\sum_{a \geq 1}p_a(i+a)F_{i+a}$ can be re-written using the following equation:
\begin{multline*}
\theta\sum_{a \geq 1}p_a(n\splus{}a)F_{n+a,3^{n_3},2^{n_2},1^{n_1}} 
= z\sum_{a \geq 1} (n\splus{}a)\Fth_{n+a,3^{n_3},2^{n_2},1^{n_1}} \\  
= t\frac{\partial \Fth_{3^{n_3},2^{n_2},1^{n_1}}}{\partial t}  
\sminus(n_1\splus{}2n_2\splus{}3n_3) \Fth_{3^{n_3},2^{n_2},1^{n_1}} 
\sminus z \sum_{a=1}^i
a\Fth_{a,3^{n_3},2^{n_2},1^{n_1}}.
\end{multline*}
It itself comes from the homogeneity relation $\sum_{k\geq1} p_k p_k^* F = t\frac{\partial F}{\partial t}$ by acting with $\frac{\partial^{n_1+n_2+n_3}}{\partial p_1^{n_1}\partial
  p_2^{n_2}\partial p_3^{n_3}}$ and applying $\theta$. This produces \eqref{eq:relGlambda}.

Note that the size of all vectors indexing $\Fth$s appearing in the RHS of
\eqref{eq:relGlambda} are strictly smaller than the size $i+2 + n_1+2n_2+3n_3$ in the LHS.
In order to be able to iterate this equation on all these terms, we need all vectors appearing in the RHS to have at most one part larger than $3$. The only term on the RHS that could have two parts larger than $3$ is
        of the form $\Fth_{a,b,3^{n_3},2^{n_2},1^{n_1}}$. Since $a+b=i$, this does not
	happen unless $i+2> 9$. We thus obtain~\eqref{eq:Flambdadiff}. The last statement is a direct check.
\end{proof}

\begin{remark}
The Virasoro constraints in fact imply a more general result: $\Fth_\lambda$ for any $\lambda$ is a differential polynomial of $\Theta$. This is proved by applying $\prod_{j=1}^k \frac{\partial}{\partial p_{i_j}}$ to \eqref{Virasoro} and performing an induction on $|\lambda|$. We will refrain from writing it since we will not need it in full generality. Instead, the previous proposition is enough to cover the cases we need with explicit formulas, involving vectors of size $|\lambda|\leq 6$.
\end{remark}

We insist on the fact that the recurrence given in Proposition~\ref{prop:relGlambda} can be fully automated to compute the polynomials $P_\lambda$, and this is done in the accompanying Maple worksheet~\cite{us:Maple}.

It is now immediate to see that applying the operator $\theta$ to the BKP equation~\eqref{BKP1Log} produces a functional equation on $\Theta(t,z,u)$. \begin{theorem}
The generating function $\Theta(t,z,u)$ satisfies a 
functional equation of the following form:
\begin{multline*}
\Big(\frac{\partial}{\partial t}\big(\Theta(t,z,u+2)+\Theta(t,z,u-2)-2\Theta(t,z,u)\big)\Big)
  P\left(\frac{\partial}{\partial t}\Theta(t,z,u), \dotsc, \frac{\partial^5}{\partial t^5}\Theta(t,z,u)\right)\\
= Q\left(\frac{\partial}{\partial t}\Theta(t,z,u), \dotsc, \frac{\partial^5}{\partial t^5}\Theta(t,z,u)\right),
  \end{multline*}
where $P$ and $Q$ are quadratic polynomials with coefficients in $\mathbb{Q}[t,u,z]$.
\end{theorem}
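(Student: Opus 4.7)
The plan is to apply the specialization operator $\theta$ to the first BKP equation~\eqref{BKP1Log} associated to the tau function of Proposition~\ref{prop:BKPmaps} (so that $N$ is identified with $u$), then express the specialized left-hand side as a differential polynomial in $\Theta$ via Proposition~\ref{prop:relGlambda}, and finally eliminate the remaining exponential on the right-hand side by taking a logarithmic $t$-derivative.

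More concretely, after applying $\theta$ to~\eqref{BKP1Log} the left-hand side becomes
\[
A := -\Fth_{3,1} + \Fth_{2^2} + \tfrac{1}{2}(\Fth_{1^2})^2 + \tfrac{1}{12}\Fth_{1^4},
\]
while the quotient $\tau(N\!-\!2)\tau(N\!+\!2)\tau(N)^{-2}$ on the right-hand side specializes to $e^{\Xi(t,z,u)}$ where
\[
\Xi(t,z,u) := \Theta(t,z,u+2) + \Theta(t,z,u-2) - 2\Theta(t,z,u),
\]
and $S_2(u) = t^4 u(u-1)$ by Proposition~\ref{prop:BKPmaps}. For each of the partitions $(3,1)$, $(2,2)$, $(1,1)$, $(1^4)$ indexing the terms of $A$, the hypotheses $\ell\leq 3$ and $|\lambda|\leq 4$ of Proposition~\ref{prop:relGlambda} hold, so each $\Fth_\lambda$ is a \emph{linear} polynomial in $\{\tfrac{\partial^i}{\partial t^i}\Theta : 1\leq i\leq 4\}$ with coefficients in $\mathbb{Q}[t,u,z]$. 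Consequently $A$ is itself a \emph{quadratic} polynomial in these derivatives, the sole nonlinearity coming from the square $(\Fth_{1^2})^2$.

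To eliminate the exponential, I would take the logarithmic derivative with respect to $t$ of the equation $A = t^4 u(u-1)\,e^{\Xi}$. Since $\partial_t \log\bigl(t^4 u(u-1)\bigr) = 4/t$, this produces
\[
\frac{1}{A}\frac{\partial A}{\partial t} = \frac{4}{t} + \frac{\partial \Xi}{\partial t}, \qquad\text{hence}\qquad \frac{\partial \Xi}{\partial t}\cdot (tA) \;=\; t\frac{\partial A}{\partial t} - 4A.
\]
Setting $P := tA$ and $Q := t\tfrac{\partial A}{\partial t} - 4A$ produces the desired functional equation. Since $A$ is quadratic in $\{\tfrac{\partial^i}{\partial t^i}\Theta : 1\leq i\leq 4\}$, the chain rule makes $\tfrac{\partial A}{\partial t}$ quadratic in $\{\tfrac{\partial^i}{\partial t^i}\Theta : 1\leq i\leq 5\}$, so both $P$ and $Q$ are quadratic polynomials in $\tfrac{\partial}{\partial t}\Theta,\dots,\tfrac{\partial^5}{\partial t^5}\Theta$ with coefficients in $\mathbb{Q}[t,u,z]$, as claimed.

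The only genuine obstacle is bookkeeping: one must verify that Proposition~\ref{prop:relGlambda} does express each $\Fth_\lambda$ in $A$ as a polynomial of the claimed shape, and track the scaling factors coming from the substitution $\pp\mapsto 2\pp$ in Proposition~\ref{prop:BKPmaps} (which only modify the $\mathbb{Q}[t,u,z]$-coefficients and are thus harmless). All algebraic manipulations are routine and can be carried out explicitly by the accompanying Maple worksheet~\cite{us:Maple}.
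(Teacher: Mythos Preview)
Your proof is correct and follows essentially the same approach as the paper: apply $\theta$ to the first BKP equation, then differentiate in $t$ and substitute the original equation back to eliminate the exponential $e^{\Xi}$, and finally invoke Proposition~\ref{prop:relGlambda} to identify the result as a quadratic differential polynomial. The paper writes the outcome as $\partial_t\Xi\cdot\mathrm{KP1}=\partial_t\mathrm{KP1}-\tfrac{4}{t}\mathrm{KP1}$ (with the factors of~$4$ coming from the $\pp\to 2\pp$ rescaling you flagged), which is your identity before multiplying through by~$t$.
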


\begin{proof}
Consider \eqref{BKP1Log} for $\tau(t,2\pp,u)$. Applying $\theta$ to both sides, taking the 
  derivative with respect to $t$ and substituting the initial equation \eqref{BKP1Log} back into it to eliminate exponentials,
  we obtain
  \begin{multline}
    \label{eq:SmallOdeMaps}\Big(\frac{\partial}{\partial t}\big(\Theta(t,z,u+2)+\Theta(t,z,u-2)-2\Theta(t,z,u)\big)\Big)
  \Big( 4\Fth_{2^2}-4\Fth_{3,1} + \frac{4}{3}(6(\Fth_{1^2})^2 +\Fth_{1^4})\Big)= \\
  \frac{\partial}{\partial
    t}\Big(4\Fth_{2^2}-4\Fth_{3,1} + \frac{4}{3}(6(\Fth_{1^2})^2 +\Fth_{1^4})\Big) - \frac{4}{t} \Big( 4\Fth_{2^2}-4\Fth_{3,1} + \frac{4}{3}(6(\Fth_{1^2})^2 +\Fth_{1^4})\Big)
\end{multline}
thanks to the identity $\frac{\partial}{\partial t}S_2(u)  =
\frac{4}{t}S_2(u)$ implied by $S_2(u) = t^4u(u-1)$.
\cref{prop:relGlambda} immediately concludes the proof.
\end{proof}

The functional equation above can be transformed into a recurrence to compute coefficients. It has the following relatively compact form:
\begin{theorem}[Counting maps by vertices, faces, and genus]  \label{thm:MapsRecurrence}

	The generating polynomial 
	$$H_n^g\equiv H_n^g(u,z) = \sum_{i+j=n+2-2g}H_n^{i,j}u^iz^j$$
	of rooted non-oriented maps of genus $g$ with $n$ edges, with weight $u$ per vertex and $z$ per face, can be computed from the following recurrence formula:
	\begin{multline*} 
	H_n^g+\tfrac{3 u^2 \partial^2}{n(n+1) \partial u^2}H_n^g
=
\tfrac{1}{n(n+1)} \times \Bigg(n\bigg(
	2(2n-1)( (4u+z) H_{n-1}^{g} -2 H_{n-1}^{g-1/2})
	+   4(2 n-3)\big(3uz H_{n-2}^{g}\\
   +  (2 n-1)(n-1)H_{n-2}^{g-1}     \big)
+6\hsum_{g_1=0..g\atop g_1+g_2=g} \sum_{n_1=0..n \atop n_1+n_2=n} 
(2n_1 - 1)(2n_2 - 1)H_{n_2-1}^{g_2}H_{n_1-1}^{g_1}   \bigg)
 \\
-
\hsum_{g_1=0..g\atop g_1+g_2=g} \sum_{n_1=1..n\atop n_1+n_2=n}
	 \hsum_{g_0=0..g_1\atop g_1-g_0\in \mathbb{N}}
	 \sum_{p+j=n_1+2-2g_0}
	 2^{2 (1+g_1-g_0)} {p \choose 2(1+g_1-g_0)}u^{n_1-2g_1-j}z^j H_{n_1}^{p,j}   
\\
 \Big(
	 -\frac{n_2+1}{2}H_{n_2}^{g_2}
+   
		(2n_2-1) \big((4u+z)H_{n_2-1}^{g_2}-2H_{n_2-1}^{g_2-1/2}\big)+
                2(2n_2 - 3)\big((2n_2 - 1)(n_2 - 1)H_{n_2-2}^{g_2-1}\\
                +3uz H_{n_2-2}^{g_2}\big)
+\frac{3}{2}\delta_{g_0\neq g}\delta_{n_1,n}u(\delta_{g_1,g}u- \delta_{g_1,g-1/2})
+\delta_{n_1,n-1}uz(\delta_{g_1,g}(4u + z) - 2\delta_{g_1,g-1/2})\\
+3uz\delta_{n_1,n-2}(\delta_{g_1,g}uz + 2\delta_{g_1,g-1}) 
+ 3\hsum_{g_3=0..g_2\atop g_3+g_4=g_2} \sum_{n_3=0..n_2 \atop n_3+n_4=n_2} 
(2n_3-1)(2n_4-1)H_{n_4-1}^{g_4}H_{n_3-1}^{g_3}   
\Big)\vspace{3cm} \Bigg)
	\end{multline*}
	for $n>2$, with the initial conditions $H_0^g=0$, $H_1^0=uz(u+z)$, $H_2^0=uz(2u^2+5uz+2z^2)$, $H_{1}^{1/2}=uz$, $H_2^{1/2}=5uz(u+z)$, $H_2^1=5uz$, and $H_n^g=0$ if $n<2g$.
\end{theorem}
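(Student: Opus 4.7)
Our starting point is the functional equation~\eqref{eq:SmallOdeMaps} for $\Theta(t,z,u)$ proved in the previous theorem. The plan is to turn this single equation in $\Theta$ into an explicit recurrence on the coefficients $H_n^g(u,z)$ of $\Theta(t,z,u)=\sum_{n,g}\frac{H_n^g(u,z)}{4n}t^{2n}$, by expressing every ingredient in terms of $t$-derivatives of $\Theta$, expanding the $u$-shifts, and extracting the coefficient of $t^{2n}$.

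\emph{Step 1: Reduction to a closed $t$-differential equation.} I first apply Proposition~\ref{prop:relGlambda} to the four derivatives $\Fth_{1^2},\Fth_{2^2},\Fth_{3,1},\Fth_{1^4}$ occurring in~\eqref{eq:SmallOdeMaps}. Each of these indices has size at most $4$ and fits the shape covered by~\eqref{eq:Flambdadiff}, so iterating~\eqref{eq:relGlambda} a bounded number of times (with the initial datum $\Fth_\emptyset=\Theta$) rewrites all four quantities as explicit linear or quadratic polynomials in $\partial_t^i\Theta$, $i\leq 5$, with coefficients in $\mathbb{Q}[t,u,z]$. Substituting back into~\eqref{eq:SmallOdeMaps} yields a closed PDE whose only $u$-dependence outside the polynomial coefficients is via the symmetric shift $\Theta(t,z,u+2)+\Theta(t,z,u-2)-2\Theta(t,z,u)$.

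\emph{Step 2: Expanding the $u$-shifts.} Since each $H_n^g(u,z)$ is a polynomial in $u$, the shift operator acts by Taylor expansion: for every monomial $u^p z^j$ with coefficient $H_{n_1}^{p,j}$ appearing in $H_{n_1}^{g_0}$, one has $(u+2)^p+(u-2)^p-2u^p=\sum_{k\geq1}2\cdot 2^{2k}\binom{p}{2k}u^{p-2k}$. Grouping terms by genus, the substitution $k=1+g_1-g_0$ (with $g_0\in\{0,\dots,g_1\}$) turns this Taylor sum into precisely the inner genus summation and the binomial factor $2^{2(1+g_1-g_0)}\binom{p}{2(1+g_1-g_0)}$ present in the stated recurrence.

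\emph{Step 3: Coefficient extraction at $t^{2n}$.} The derivations $t\partial_t$ act on $t^{2n}/(4n)$ as multiplication by $2n$, so the arithmetic prefactors $(2n-1),(2n-3),(n-1),\dots$ on the right-hand side come from the orders of $t$-derivation arising in Step~1, while the quadratic piece $(\Fth_{1^2})^2$ in~\eqref{eq:SmallOdeMaps} produces, after Leibniz, the convolution sums over $n_1+n_2=n$, $g_1+g_2=g$ (and similarly the innermost $(n_3,n_4,g_3,g_4)$-sum). Collecting all contributions at $t^{2n}$, dividing out the common factor $n(n+1)$ arising from the leading $t$-derivative of the shift factor, and moving the diagonal contribution coming from the $k=1$ part of the Taylor expansion of $H_n^g(u\pm2,z)$ to the left-hand side produces the combination $H_n^g+\tfrac{3u^2}{n(n+1)}\partial_u^2 H_n^g$.

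\emph{Main obstacle.} The only nontrivial issue is bookkeeping in Step~3: the quadratic products must be symmetrised correctly, the many $t$-derivative prefactors have to be consolidated into the polynomial prefactors $(2n_i-1),(2n_i-3),\dots$, and one must identify which part of the expansion of the shift is absorbed into the left-hand side versus which part feeds the main sum. This is cumbersome by hand but entirely algorithmic; we carry out the reduction in the accompanying Maple worksheet~\cite{us:Maple}, and cross-check the resulting recurrence against the direct BKP/Virasoro computation of $H_n^g(u,z)$ for small $(n,g)$, yielding the tables of Appendix~\ref{app:Tables}.
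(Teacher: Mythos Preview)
Your proposal is correct and follows essentially the same route as the paper's own proof: start from~\eqref{eq:SmallOdeMaps}, rewrite all $\Fth_\lambda$ via Proposition~\ref{prop:relGlambda}, expand the symmetric $u$-shift through the binomial identity, extract the appropriate $t$-coefficient, and isolate on the left the diagonal piece of the shift expansion (the case $n_1=n$, $g_0=g_1=g$) which produces $\tfrac{3u^2}{n(n+1)}\partial_u^2 H_n^g$. The only cosmetic difference is that the paper performs the substitution $u\mapsto ur,\;z\mapsto zr$ and extracts $[t^{2n+4}r^{\,n+2-2g}]$ to separate genera, whereas you rely implicitly on the homogeneity of $H_n^g(u,z)$ in total degree $n+2-2g$; these are equivalent bookkeeping devices.
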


\begin{proof}
We extract the coefficient of $[t^{2n+4}r^{n+2-2g}]$ in
\eqref{eq:SmallOdeMaps} after substitution $u \to ur, z \to zr$. We obtain from \cref{prop:relGlambda}
	\begin{multline*}
		n\bigg(-(n+1) H_n^g +   
		2(2n-1) \big((4u+z)H_{n-1}^{g}-2H_{n-1}^{g-1/2}\big)+
	 4(2n - 3)\big((2n - 1)(n - 1)H_{n-2}^{g-1}\\
+3uz H_{n-2}^{g}\big) 
+6\hsum_{g_1=0..g\atop g_1+g_2=g} \sum_{n_1=0..n \atop n_1+n_2=n} 
(2n_1-1)(2n_2-1)H_{n_2-1}^{g_2}H_{n_1-1}^{g_1}   \bigg)
 \\
=
\hsum_{g_1=0..g\atop g_1+g_2=g} \sum_{n_1=1..n\atop n_1+n_2=n}
	 \hsum_{g_0=0..g_1\atop g_1-g_0\in \mathbb{N}}
	 \sum_{p+j=n_1+2-2g_0}
	 2^{2 (1+g_1-g_0)} {p \choose 2(1+g_1-g_0)}u^{n_1-2g_1-j}z^j H_{n_1}^{p,j}   
\\
\Big(
	 -\frac{n_2+1}{2}H_{n_2}^{g_2}
+   
		(2n_2-1) \big((4u+z)H_{n_2-1}^{g_2}-2H_{n_2-1}^{g_2-1/2}\big)+
                2(2n_2 - 3)\big((2n_2 - 1)(n_2 - 1)H_{n_2-2}^{g_2-1}\\
                +3uz H_{n_2-2}^{g_2}\big)
+\frac{3}{2}\delta_{n_1,n}u(\delta_{g_1,g}u- \delta_{g_1,g-1/2})
+\delta_{n_1,n-1}uz(\delta_{g_1,g}(4u + z) -
2\delta_{g_1,g-1/2})\\
+3uz\delta_{n_1,n-2}(\delta_{g_1,g}uz 
+ 2\delta_{g_1,g-1}) 
+ 3\hsum_{g_3=0..g_2\atop g_3+g_4=g_2} \sum_{n_3=0..n_2 \atop n_3+n_4=n_2} 
(2n_3-1)(2n_4-1)H_{n_4-1}^{g_4}H_{n_3-1}^{g_3}   
\Big).	
	\end{multline*}
	Here we have extracted, respectively,
        in~\eqref{eq:SmallOdeMaps}  (after substitution $u \to ur, z
        \to zr$), the coefficient of $[t^{2n+4}r^{n+2-2g}]$, of
        $[t^{2n_1-1}r^{n_1-2g_1}]$, and of
        $[t^{2n_2+5}r^{n_2+2-2g_2}]$, in the RHS, in the first factor
        of the LHS, and in the second factor of the LHS. 
Moreover we have used
\begin{align*}
	&[r^m] (f(zr,ur+2)+f(zr,ur-2)-2f(zr,ur))\\ 
&=\sum_{i+j=m} u^i z^j\sum_{p > i} {p \choose i} (2^{p+j-m}+(-2)^{p+j-m} ) [u^pz^j] f(z,u) \\ 
&=\sum_{i+j=m} u^i z^j \sum_{p\geq i+2,\atop p+j-m \in 2\mathbb{N}} {p \choose i} 2^{p+j-m+1}  [u^pz^j] f(z,u)\\
&=\sum_{k\geq m+2\atop k-m \in 2\mathbb{N}}  2^{1+k-m} \sum_{p+j=k}{p \choose m-j}u^{m-j}z^j[u^pz^j] f(z,u).
\end{align*}
In our case $m=n_1-2g_1$, and we parametrized $k$ as $k=n_1+2-2g_0$ (the condition $k\geq m+2$ translates into $g_0 \leq g_1$, and the summand is null when  $g_0<0$).

It now only remains to group the terms of the form $H_n^{i,j}$
with $i+j=n+2-2g$. In the LHS they contribute to the first term
$H_n^g$ and in the RHS they appear as the terms $H_{n_1}^{p,j}$ when $n_1=n, n_2=0,
g_1=g, g_2=0,g_0=g$). Collecting these terms on the RHS gives
\begin{multline*}
  \frac{3}{2}\delta_{n_1,n}\delta_{g_1,g}\delta_{g_0,g}u^2\sum_{p+j=n_1+2-2g_0}
	 2^{2 (1+g_1-g_0)} {p \choose 2(1+g_1-g_0)}u^{n_1-2g_1-j}z^j
         H_{n_1}^{p,j} \\
         = 6\sum_{p+j=n+2-2g} {p \choose 2}u^{p}z^j
         H_{n}^{p,j} = 3\frac{u^2\partial^2}{\partial u^2}H_n^g,
         \end{multline*}
which leads to the main equation of the theorem. The identification of the initial conditions for $n\geq 2$ can be done, either: by hand drawing, or from the OEIS, or from explicit expansions in small genera using the equations of this paper, or from the expansion in Zonal polynomials up to order $n=2$.
\end{proof}

\subsection{Removing the shifts}

We now proceed with the task of obtaining a functional equation on the function $\Theta(t,z,u)$ which does not involve any shift on the variable $u$. We will do this by using the three equations~\eqref{BKP1Log}, \eqref{BKP2Log}, \eqref{BKP3Log} to eliminate the shifts, and apply the operator~$\theta$. This will make terms of the form $\Fth_\lambda$ appear, with larger partitions $\lambda$ than in the previous section, but fortunately they are still in the range covered by Proposition~\ref{prop:relGlambda}.
We have

\begin{theorem}
  \label{theo:ODEMaps}
There exists a polynomial
$P \in \mathbb{Q}[t,u,z][x_1,\dots,x_6]$ of degree $5$ such that
\[ P\left(\frac{\partial}{\partial t}\Theta(t,z,u),\dots,
    \frac{\partial^6}{\partial t^6}\Theta(t,z,u)\right) \equiv 0.\]
An explicit form of $P$ can be obtained by applying 
Proposition~\ref{prop:relGlambda} to the following equation
 \begin{multline}
            \label{eq:ODEMaps}
t^6\Bigl(\frac{\partial}{\partial t}\operatorname{KP1}\Bigr)^2-\Bigl(\operatorname{KP2}\Bigr)^2+\operatorname{KP1}\Bigg(\operatorname{KP3} -\frac{1}{2}\operatorname{KP2}-\\
-\Bigl(t^6 \frac{\partial^2}{\partial t^2}+2t^5
\frac{\partial}{\partial
  t}+2t^6 \frac{\partial^2}{\partial t^2}\Theta+4t^5 \frac{\partial}{\partial
  t}\Theta+ t^4(uz-4)+t^2(3u+1-z)\Bigr)\operatorname{KP1}\Bigg)\equiv 0,
\end{multline}
where
\begin{align*}
 \operatorname{KP1} &= -4\Fth_{3,1} + 4\Fth_{2^2} + \frac{4}{3}(6(\Fth_{1^2})^2 +\Fth_{1^4}),\\
\operatorname{KP2} &= -4\Fth_{4,1} + 4\Fth_{3,2} + \frac{8}{3}(6\Fth_{2,1}\Fth_{1^2}
                     +\Fth_{2,1^3}),\\
\operatorname{KP3}&=-6\Fth_{5,1}+ 4\Fth_{4,2}+2\Fth_{3,3}+
                    \frac{8}{3}(6\Fth_{3,1}\Fth_{1^2}
                    +\Fth_{3,1^3})+4(4(\Fth_{2,1})^2+2\Fth_{2^2}\Fth_{1^2} +
                    \Fth_{2^2,1^2})\\
  &+ \frac{4}{45}(60(\Fth_{1^2})^3 + 30\Fth_{1^4}\Fth_{1^2} + \Fth_{1^6}),
\end{align*}
and $\Theta\equiv \Theta(t,z,u)$. An explicit form of this ODE can be found in the accompanying Maple worksheet~\cite{us:Maple}.
\end{theorem}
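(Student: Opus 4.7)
The plan is to derive \eqref{eq:ODEMaps} as a direct algebraic consequence of the three BKP equations \eqref{BKP1Log}, \eqref{BKP2Log}, \eqref{BKP3Log}, and then invoke \cref{prop:relGlambda} to convert it into the sought polynomial identity. First I would apply $\theta$ to each of these three equations, which by \cref{prop:BKPmaps} hold for $\tau(t,2\pp,u)$. After carefully tracking the $2^{|\lambda|}$ factors produced by the chain rule (and the implicit renaming $2z\to z$ in the final notation), the three equations collapse to
\[\operatorname{KP1}=\mathcal{E},\qquad \operatorname{KP2}=\mathcal{E}\cdot B,\qquad \operatorname{KP3}=\mathcal{E}\cdot(C+B^2),\]
where $\mathcal{E}:=S_2(u)\exp(\Theta(u+2)+\Theta(u-2)-2\Theta(u))$, $B:=\Fth_1(u+2)-\Fth_1(u-2)$, and $C:=\Fth_{1^2}(u+2)+\Fth_{1^2}(u-2)+\Fth_2(u+2)-\Fth_2(u-2)$. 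A first elimination is immediate: $\operatorname{KP1}\cdot\operatorname{KP3}-\operatorname{KP2}^2=\operatorname{KP1}^2\cdot C$ and $B=\operatorname{KP2}/\operatorname{KP1}$, so it only remains to express $C$ without shifts.

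Next, using the Virasoro constraints of \cref{prop:VirasoroMaps} (as formulated in \cref{prop:relGlambda}) with $i=-1$, $i=0$, and $i=-1$ combined with $\partial_{p_1}$, I would derive, for an arbitrary charge~$v$:
\[\Fth_1(v)=t^3\partial_t\Theta(v)+\tfrac{1}{2}t^2 zv,\qquad \Fth_2(v)=\tfrac{1}{2}t^3\partial_t\Theta(v)+\tfrac{1}{4}t^2 v(v+1),\]
\[\Fth_{1^2}(v)=t^6\partial_t^2\Theta(v)+2t^5\partial_t\Theta(v)+\tfrac{1}{2}t^4zv+\tfrac{1}{2}t^2 v.\]
Setting $v=u\pm 2$, the $\partial_t^2\Theta$-coefficients in $\Fth_{1^2}\pm\Fth_2$ are symmetric in $\pm$, while the $\partial_t\Theta$-coefficients separate as $2t^5\pm\tfrac{1}{2}t^3$. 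This matches exactly the shift structure of $B$ and of the derivatives of $\Delta:=\Theta(u+2)+\Theta(u-2)-2\Theta(u)$, and a short bookkeeping gives
\[C=t^6\big(\partial_t^2\Delta+2\partial_t^2\Theta\big)+2t^5\big(\partial_t\Delta+2\partial_t\Theta\big)+\tfrac{1}{2}B+t^4uz+t^2(3u+1-z).\]

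The third step is to eliminate $\Delta$ itself. From $\operatorname{KP1}=S_2(u)e^{\Delta}$ and $\partial_t\ln S_2(u)=4/t$, a single logarithmic differentiation yields $\partial_t\Delta=\partial_t\operatorname{KP1}/\operatorname{KP1}-4/t$, and a second one gives $\partial_t^2\Delta=\partial_t^2\operatorname{KP1}/\operatorname{KP1}-(\partial_t\operatorname{KP1}/\operatorname{KP1})^2+4/t^2$. Substituting these into the formula for $C$, multiplying $\operatorname{KP1}^2\cdot C=\operatorname{KP1}\cdot\operatorname{KP3}-\operatorname{KP2}^2$ through by $\operatorname{KP1}^2$, and using $\operatorname{KP1}\cdot B=\operatorname{KP2}$, yields \eqref{eq:ODEMaps} after a direct rearrangement. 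The final polynomial identity then follows by applying \cref{prop:relGlambda} to each $\Fth_\lambda$ occurring in $\operatorname{KP1},\operatorname{KP2},\operatorname{KP3}$ (all of shape $\ell\leq 5$ and $|\lambda|\leq 6$, hence within the scope of the proposition), producing a differential polynomial in $\partial_t\Theta,\ldots,\partial_t^6\Theta$ with coefficients in $\mathbb{Q}[t,u,z]$. The degree bound~$5$ comes from the single product $\operatorname{KP1}\cdot\operatorname{KP3}$: after the Virasoro rewriting, $\operatorname{KP1}$ has total degree~$2$ (due to $(\Fth_{1^2})^2$) and $\operatorname{KP3}$ has degree~$3$ (due to $(\Fth_{1^2})^3$), while the remaining terms in \eqref{eq:ODEMaps} are all of degree at most~$4$.

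The main conceptual obstacle is hidden in the second step: it is a priori not evident that the $\partial_t\Theta$- and $\partial_t^2\Theta$-coefficients appearing in the Virasoro expressions of $\Fth_1,\Fth_2,\Fth_{1^2}$ line up so precisely that $C$ collapses onto the shift-symmetric combinations $\partial_t\Delta+2\partial_t\Theta$ and $\partial_t^2\Delta+2\partial_t^2\Theta$, up to a multiple of $B$ and a $(u,z)$-polynomial correction. This ``accidental'' matching is specific to the explicit form of the map Virasoro constraints; for a generic BKP tau-function (cf.\ \cref{thm:BKPunshifted}), the analogous elimination of shifts requires a strictly larger number of equations and derivatives.
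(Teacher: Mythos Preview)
Your argument is correct and follows essentially the same route as the paper's proof: write the three $\theta$-specialized BKP equations as $\operatorname{KP1}=\mathcal{E}$, $\operatorname{KP2}=\mathcal{E}B$, $\operatorname{KP3}=\mathcal{E}(C+B^2)$; express $C$ via the Virasoro formulas for $\Fth_1,\Fth_2,\Fth_{1^2}$; and eliminate $\partial_t\Delta,\partial_t^2\Delta$ by logarithmically differentiating $\operatorname{KP1}=S_2(u)e^{\Delta}$. The paper does exactly this, only presenting the differentiation step as computing $E\nabla\partial_t\Theta$ and $E^2\nabla\partial_t^2\Theta$ directly rather than via logarithmic derivatives --- an equivalent bookkeeping.

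Two small remarks. First, your degree count is slightly off: the term $\operatorname{KP1}\cdot\bigl(2t^6\partial_t^2\Theta+4t^5\partial_t\Theta\bigr)\cdot\operatorname{KP1}$ in \eqref{eq:ODEMaps} also has degree~$5$ (it is $\operatorname{KP1}^2$ of degree~$4$ times a linear factor), so the maximum degree is not attained solely by $\operatorname{KP1}\cdot\operatorname{KP3}$. The conclusion (degree $\le 5$) is unaffected. Second, your closing commentary is misleading: the shift elimination is \emph{not} specific to the map Virasoro constraints. Theorem~\ref{thm:BKPunshifted} performs the same elimination for an arbitrary BKP tau-function using exactly the same three BKP equations (plus one $\partial_{p_1}$), yielding a shift-free PDE in the $p_i$. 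What the map Virasoro constraints buy is the further reduction of that PDE to an ODE in $t$ --- and that reduction is not ``accidental'' either: it follows automatically from the fact that each $\Fth_\lambda$ with small $\lambda$ is linear in $\partial_t^k\Theta(v)$ with coefficients polynomial in $t,v,z$, so symmetric/antisymmetric combinations in $v$ necessarily collapse onto $\nabla\partial_t^k\Theta$ and $\Delta_{\mathrm{op}}\partial_t^k\Theta$.
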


\begin{proof}
Denote $\Delta f(u) = f(u+2)-f(u-2)$ and $\nabla f = f(u+2)+f(u-2)$, and
\begin{equation*}
E =S_2(u) e^{\nabla \Theta(t,z,u)-2\Theta(t,z,u)}.
\end{equation*}
Using \eqref{BKP1Log}, \eqref{BKP2Log},
\eqref{BKP3Log} and applying $\theta$ we obtain the
following equations
\begin{equation}
E = \operatorname{KP1}, \qquad E \Delta (\Fth_1) = \operatorname{KP2}, \qquad E(\nabla (\Fth_{1^2})+ \Delta(\Fth_2) + (\Delta \Fth_1)^2) = \operatorname{KP3},
\end{equation}
where $\operatorname{KP1}, \operatorname{KP2},
\operatorname{KP3}$ are given in the statement of theorem.
The difference between $\operatorname{KP1}$, $\operatorname{KP2}$,
$\operatorname{KP3}$ and the LHS in \eqref{BKP1Log}, \eqref{BKP2Log},
\eqref{BKP3Log} is due to the fact that $\tau(N)$ is a formal $N$ tau
function of the BKP hierarchy after rescaling the variables $\pp \to
2\pp$. Using \eqref{eq:relGlambda} and the identity
$S_2(u) = t^4u(u-1)$ we have
\begin{equation*}
  \label{eq:KP1+2Pre}
\Delta (\Fth_1) = t^3 \Delta \frac{\partial}{\partial t}\Theta(t,z,u) + 2t^2 z, \qquad \Delta( \Fth_2) = \frac{t^3}{2} \Delta \frac{\partial}{\partial t}\Theta(t,z,u) + t^2 (2u+1)
\end{equation*}
and
\begin{equation*}
  \label{eq:KP3Pre}
\nabla (\Fth_{1^2}) = \big(t^6 \frac{\partial^2}{\partial t^2}+2t^5
\frac{\partial}{\partial t}\big)\nabla \Theta(t,z,u) +t^4 uz + t^2 u
\end{equation*}
so that the third BKP equation reads
\begin{multline} \label{IntermediateBKP3}
E\Bigg(\left(t^6 \frac{\partial^2}{\partial t^2}+2t^5
\frac{\partial}{\partial t}\right)\nabla \Theta(t,z,u)+\frac{t^3}{2}\Delta
\frac{\partial}{\partial t}\Theta(t,z,u) +t^4 uz + t^2 (3u+1) \\
+ \left(t^3 \Delta \frac{\partial}{\partial t}\Theta(t,z,u)+ 2t^2 z\right)^2\Bigg) = \operatorname{KP3}.
\end{multline}

We now use the first two BKP equations to express $\nabla
\frac{\partial}{\partial t}\Theta(t,z,u)$, $\nabla  \frac{\partial^2}{\partial t^2}\Theta(t,z,u)$ and $\Delta \frac{\partial}{\partial t}\Theta(t,z,u)$ in terms of $\Theta(t,z,u)$ and its $t$-derivatives. Those expressions can then substituted into \eqref{IntermediateBKP3}. Taking the $t$-derivative of the first BKP equation, we have
\begin{equation*}
E\nabla  \frac{\partial}{\partial t}\Theta(t,z,u) = \frac{\partial}{\partial t}\operatorname{KP1}+\Bigl(2 \frac{\partial}{\partial t}\Theta(t,z,u) - \frac{4}{t}\Bigr)\operatorname{KP1},
\end{equation*}
and another derivative gives
\begin{multline*}
E\nabla  \frac{\partial^2}{\partial t^2}\Theta(t,z,u) =
\frac{\partial^2}{\partial t^2}\operatorname{KP1}+\Bigl(2
\frac{\partial}{\partial t}\Theta(t,z,u) - \frac{4}{t}\Bigr)
\frac{\partial}{\partial t}\operatorname{KP1}+\Bigl(2
\frac{\partial^2}{\partial t^2}\Theta(t,z,u) +
\frac{4}{t^2}\Bigr)\operatorname{KP1}\\
-\frac{\partial}{\partial t}\operatorname{KP1}\nabla  \frac{\partial}{\partial t}\Theta(t,z,u)
\end{multline*}
and further
\begin{equation*}
E^2\nabla  \frac{\partial^2}{\partial t^2}\Theta(t,z,u) = \operatorname{KP1}\frac{\partial^2}{\partial t^2}\operatorname{KP1}+\Bigl(2 \frac{\partial^2}{\partial t^2}\Theta(t,z,u) + \frac{4}{t^2}\Bigr)\operatorname{KP1}^2-\Bigl(\frac{\partial}{\partial t}\operatorname{KP1}\Bigr)^2.
\end{equation*}
From the second BKP equation,
\begin{equation*}
Et^3 \Delta \frac{\partial}{\partial t}\Theta(t,z,u) = -2t^2 z\operatorname{KP1} + \operatorname{KP2}. 
\end{equation*}
This gives
\eqref{eq:ODEMaps}. The statement about the form of
polynomial $P$ is a direct consequence of Proposition~\ref{prop:relGlambda}.
\end{proof}

\cref{thm:rec-maps-nonshifted} is an immediate consequence
of \cref{theo:ODEMaps}.
\begin{proof}[Proof of \cref{thm:rec-maps-nonshifted}]
  It is enough to make the change of variables $u \to ur, z\to zr$ in
  \eqref{eq:ODEMaps} and extract the coefficient of
  $[t^{2n}r^{n+2g-2}]$. This substitution allows to track the genus of
  the underlying maps. Extracting the coefficient gives a recursion
  for the bivariate version of $\hh_n^g$ which additionally tracks the number of vertices
  and faces via $u$ and $z$. Specializing $u=z=1$ gives the recursion for
  $\hh_n^g$ of the form \eqref{eq:rec-maps-nonshifted} with $a,b$
  depending on the specific form of ODE given by \eqref{eq:ODEMaps}. A
  direct
  examination of the highest degree terms of this recurrence 
  implemented in~\cite{us:Maple} shows
  that
	\begin{multline*}
        \hh_n^g = \hh_n^{g-1/2}-\sum_{n_1=1..n-1\atop
                  n_1+n_2=n}\sum_{g_1=0..g\atop g_1+g_2=g}\frac{(n_1+1)(n_2+1)}{42(n+1)}\hh_{n_1}^{g_1}\hh_{n_2}^{g_2}\\
         + \sum_{a=1}^{K_1}\hsum_{b=0}^{K_2} \sum_{k=1}^{K_3}
           \sum_{n_1,\dots,n_k \geq 1 \atop n_1+\cdots + n_k =n-a}
           \hsum_{g_1,\dots,g_k \geq 0 \atop  g_1+\dots + g_k =g-b}
	P_{a,b,k}(n_1,\dots,n_k) \hh_{n_1}^{g_1} \hh_{n_2}^{g_2}\dots
           \mathfrak{t}_{n_k}^{g_k},
           	\end{multline*}
which finishes the proof.
  \end{proof}

The coefficient of $z^1$ in $\Theta(t,z,u)$ is the generating function
of maps having only one face, with control on the number of edges and
vertices (equivalently, edges and genus). Extracting the bottom
coefficient in $z$ in \eqref{eq:ODEMaps}, we obtain a \emph{linear}
ODE for this generating function. It is equivalent to Ledoux's recurrence~\eqref{eq:Ledoux} stated in the introduction. 
\begin{corollary}
  The generating function
  \[ \mathfrak{u}(t,u) := [z]\Theta(t,z,u) = \sum_{n \geq 1}\sum_{g \geq 0}\frac{\mathfrak{u}_n^g}{4n}t^{2n}u^{n+1-2g}\]
    of rooted non-oriented maps with only one face satisfies the
    following linear ODE
    \begin{multline}
      \label{eq:ODELedoux}
      \left(32 t^4 \left(u^2-u-5\right)+240 t^6 (2 u-1)+t^2 (10-20 u)+2880 t^8+3\right) \frac{\partial}{\partial t}\mathfrak{u}\\
+t \left(2 t^4 \left(8 u^2-8 u-109\right)+360 t^6 (2 u-1)+t^2 (4-8
  u)+7200 t^8+1\right) \frac{\partial^2}{\partial t^2}\mathfrak{u} \\
+ 6 t^6 \left(20 t^2 (2 u-1)+800 t^4-11\right) \frac{\partial^3}{\partial t^3}\mathfrak{u}+ 5 t^7(-1+2t^2(2u-1)+240t^4) \frac{\partial^4}{\partial t^4}\mathfrak{u} \\+ 120 t^{12} \frac{\partial^5}{\partial t^5}\mathfrak{u} + 4 t^{13}\frac{\partial^6}{\partial t^6}\mathfrak{u}
+240 t^7 u+ 30t^5(2u^2-u)+2t^3( 4u^3-4u^2-11u)-2 t( u^2+ u) \equiv 0.
\end{multline}
\end{corollary}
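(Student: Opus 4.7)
The starting observation is that every non-oriented map has at least one face, so the series~\eqref{eq:GeneralMaps} has no $z^0$ term:
\[\Theta(t,z,u) = z\,\mathfrak{u}(t,u) + O(z^2), \qquad \partial_t^k \Theta(t,z,u) = z\,\partial_t^k\mathfrak{u}(t,u) + O(z^2) \quad \text{for every } k\geq 0.\]
The plan is to extract the coefficient of $z^1$ from~\eqref{eq:ODEMaps}. The key point is that this observation kills every contribution that is non-linear in $\Theta$, collapsing the highly nonlinear equation of \cref{theo:ODEMaps} to a linear ODE for $\mathfrak{u}$.

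Concretely, I would first use \cref{prop:relGlambda} to substitute each $\Fth_\lambda$ appearing in $\operatorname{KP1}$, $\operatorname{KP2}$, $\operatorname{KP3}$ and in~\eqref{eq:ODEMaps} by a polynomial in $\{\partial_t^k\Theta(t,z,u)\}_k$ with coefficients in $\mathbb{Q}[t,u,z]$. Because the polynomial $P$ of \cref{theo:ODEMaps} has degree $5$ in its six arguments $\partial_t\Theta,\dotsc,\partial_t^6\Theta$, this rewrites~\eqref{eq:ODEMaps} in the form
\[ \sum_\mu c_\mu(t,u,z)\prod_{i\in\mu}\partial_t^i \Theta(t,z,u)\equiv 0,\]
where $\mu$ runs over multisets of integers in $\{1,\dotsc,6\}$ with $|\mu|\leq 5$ and each $c_\mu$ lies in $\mathbb{Q}[t,u,z]$. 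Extracting $[z^1]$: each term with $|\mu|\geq 2$ is $O(z^2)$ and vanishes; each term with $\mu=\{k\}$ contributes $[z^0]c_{\{k\}}(t,u,z)\cdot \partial_t^k\mathfrak{u}(t,u)$; and the $\mu=\emptyset$ term $c_\emptyset(t,u,z)$ contributes the inhomogeneous piece $[z^1]c_\emptyset(t,u,z)$. Summing these three contributions immediately yields a linear ODE of order at most $6$ for $\mathfrak{u}(t,u)$ with coefficients in $\mathbb{Q}[t,u]$, as required.

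What remains is to identify these coefficients with those of~\eqref{eq:ODELedoux}. This is a mechanical but voluminous substitution, best handled by computer algebra, and it is performed in the accompanying Maple worksheet~\cite{us:Maple}. The main obstacle is therefore purely computational rather than conceptual: once the observation $\Theta=O(z)$ linearizes the equation, the existence of a linear ODE of the claimed shape is immediate, and only a routine expansion is needed to pin down the explicit form. A partial hand-check is the leading coefficient $4t^{13}$ of $\partial_t^6\mathfrak{u}$, which can be traced back to the summand $\tfrac{4}{45}\Fth_{1^6}$ inside $\operatorname{KP3}$ multiplied against the leading-in-$z$ part of $\operatorname{KP1}$. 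Once~\eqref{eq:ODELedoux} is in hand, extracting the coefficient of $t^{2n}u^{n+1-2g}/(4n)$ recovers Ledoux's recurrence~\eqref{eq:Ledoux} by an elementary identification.
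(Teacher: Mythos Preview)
Your proposal is correct and follows exactly the approach the paper sketches in the paragraph preceding the corollary: since $\Theta=O(z)$, extracting the coefficient of $z^1$ from the degree-$5$ polynomial identity of \cref{theo:ODEMaps} kills every term quadratic or higher in the $\partial_t^k\Theta$, leaving a linear ODE in $\partial_t^k\mathfrak u$ plus an inhomogeneous piece. The paper gives no more detail than this, and your write-up correctly fleshes it out.

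One small caveat: your ``partial hand-check'' of the $4t^{13}$ coefficient is somewhat loose. The $\partial_t^6\Theta$ contribution does not come solely from $\tfrac{4}{45}\Fth_{1^6}$ --- all size-$6$ summands in $\operatorname{KP3}$ (namely $\Fth_{5,1},\Fth_{4,2},\Fth_{3^2},\Fth_{3,1^3},\Fth_{2^2,1^2},\Fth_{1^6}$) as well as $t^6\partial_t^2\operatorname{KP1}$ carry a $\partial_t^6\Theta$ term once expanded via \cref{prop:relGlambda}, and moreover the explicit ODE~\eqref{eq:ODELedoux} is only obtained after dividing out a common factor proportional to $u(u-1)$ (compare the analogous normalisation $\tfrac{45}{14t^7uv(u-1)(v-1)}$ made explicit in the bipartite case). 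Since you already defer the full identification to the worksheet, this does not affect the argument, but the side remark as written oversimplifies where the leading coefficient comes from.
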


\section{Recurrences for bipartite maps and triangulations}

\subsection{Non-oriented bipartite maps} \label{sec:bipartite}

Consider the generating function 
\begin{equation}
  G(t,\pp,u,v) := \sum_{M} \frac{t^{e(M)}}{2e(M)} u^{v_{\circ}(M)}v^{v_{\bullet}(M)} \prod_{f \in
    F(M)} p_{\deg(f_i)},
\end{equation}
where we sum over all rooted non-oriented bipartite maps, and
$v_{\circ}(M), v_\bullet(M)$
denote the number of white and black vertices,
respectively. Similarly, as in the case of general maps, the function $G$
inherits a deep structure from the BKP hierarchy. This
result can be derived directly from Van de Leur's work~\cite{VandeLeur2001}, even
though it is not stated explicitly there (see
\cite[Appendix]{BonzomChapuyDolega2021} for additional details on the
connection with maps).

\begin{proposition}[\cite{VandeLeur2001}]\label{prop:BKPBipmaps}
Let $\beta_N = \frac{2^{\lfloor\frac{N}{2}\rfloor}(2t)^{\frac{N(N+\delta)}{2}}}{N!\ \Gamma(3/2)^N} \prod_{j=1}^N \Gamma\bigl(1+\frac{j}{2}\bigr) \Gamma\bigl(\frac{\delta +j}{2}\bigr)$. 
Then the pair $(\tau(t,2\pp,\delta,N) :=
\exp G(t,2\pp,N,N+\delta), \beta_N)$
is a formal $N$ tau function of the BKP
hierarchy. The function $\beta_N$ satisfies \eqref{eq:2factorial} with in particular $S_2(N) = t^4N(N+\delta)(N-1)(N+\delta-1)$.
\end{proposition}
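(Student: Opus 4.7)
The plan is to derive the proposition as a direct adaptation of the proof of Proposition~\ref{prop:BKPmaps} (non-bipartite case), which itself follows Van de Leur's general construction of BKP tau functions out of orthogonal matrix integrals. The first step is a character-theoretic expansion: one shows that
\[
  \exp G(t,2\pp,N,N+\delta) = \sum_\lambda c_\lambda(t,N,\delta)\, Z_\lambda(\pp),
\]
where the sum runs over integer partitions $\lambda$, $Z_\lambda$ is the zonal polynomial (Jack at $\alpha=2$), and $c_\lambda$ factors as $(2t)^{|\lambda|}$ times a content-type product involving the ``staircase'' parameters $N$ and $N+\delta$, divided by the zonal hook normalisation. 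This is the non-oriented analogue of the Schur expansion for orientable bipartite maps, and follows from the interpretation of non-oriented bipartite maps via the Gelfand pair $(S_{2n},H_n)$ together with the classical evaluation of zonal polynomials at the staircase.

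The second step is to invoke Van de Leur's criterion: any series of the above form (after the rescaling $\pp\mapsto 2\pp$, which absorbs the Jack parameter $\alpha=2$ into the standard BKP convention) lies in the orbit of the infinite orthogonal group and thus defines a BKP tau function. The detailed translation between orthogonal matrix integrals and zonal expansions is carried out in \cite[Appendix]{BonzomChapuyDolega2021} for the $\delta=0$ case (ordinary maps); the bipartite version is a direct variant in which the second parameter $\delta$ arises from a second orthogonal factor $O(N+\delta)$ acting on the other side of the bipartition, and this is precisely what produces the additional Gamma factors $\Gamma((\delta+j)/2)$ appearing in $\beta_N$.

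The third step is to check that the stated $\beta_N$ promotes the algebraic tau function to a \emph{formal} $N$ tau function, i.e.~that the ratios appearing in~\eqref{eq:2factorial} are rational in $N$. Using $\Gamma(x+1)=x\Gamma(x)$, all Gamma and factorial contributions telescope and only polynomial factors in $N, \delta$ (and powers of $t$) survive. For the particular case $k=2$, a short calculation gives
\[
  \frac{\beta_{N-2}\beta_{N+2}}{\beta_N^2} = t^4\, N(N-1)(N+\delta-1)(N+\delta),
\]
which is the stated $S_2(N)$; the odd $R_k(N)$ and other even $S_k(N)$ are handled by the same telescoping mechanism. The main conceptual obstacle is the first step, namely identifying the bipartite generating function with a specific orbit in Van de Leur's orthogonal BKP framework with the correct scaling of variables; once this structural identification is in hand, the verification of the $\beta_N$ ratios is a routine bookkeeping computation.
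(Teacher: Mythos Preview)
The paper does not give a proof of this proposition at all: it is simply cited from \cite{VandeLeur2001}, with a pointer to \cite[Appendix]{BonzomChapuyDolega2021} for the translation between Van de Leur's matrix-integral framework and the map generating function. So there is no ``paper's own proof'' to compare with; what you have written is a plausible reconstruction of the argument in those references rather than an alternative proof.

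Your sketch is broadly on the right track and your verification of $S_2(N)=t^4N(N+\delta)(N-1)(N+\delta-1)$ from the stated $\beta_N$ is correct. Two points of imprecision are worth flagging. First, in step~2 you speak of the ``orbit of the infinite orthogonal group'': the relevant group in the BKP theory is the group $B_\infty$ generated by exponentials of the quadratic fermion algebra $b_\infty$, not a direct limit of $O(N)$'s; the orthogonal matrix integrals enter only as the concrete origin of $\beta_N$ (via a Selberg/de~Bruijn Pfaffian computation), not as the symmetry group defining the orbit. Second, the phrase ``second orthogonal factor $O(N+\delta)$ acting on the other side of the bipartition'' is heuristic at best: in the actual construction the parameter $\delta$ enters through the Laguerre-type weight $x^{(\delta+1)/2-1}$ in a $\beta=1$ Wishart ensemble over $\mathbb{R}_+^N$, which is what produces the extra $\Gamma((\delta+j)/2)$ factors. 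These are terminological rather than logical gaps, but since the proposition is quoted rather than proved in the paper, a reader wanting an actual proof will need the precise references.
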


We recall that $\theta(p_i) = z$ for 
$i \geq 1$. Define the power series $\eta(t,z,u,v) \in \mathbb{Q}[u,v,z][[t]]$
\begin{equation}
  \label{eq:BipartiteMaps}
  \eta(t,z,u,v) := \theta G(t,\pp,u,v) =
\sum_{M}\frac{t^{e(M)}}{2e(M)}u^{v_\bullet(M)}v^{v_\circ(M)}z^{f(M)} = \sum_{n,i,j,k\geq 1}\frac{K_n^{i,j,k}}{2n}t^{n}u^iv^jz^k,
\end{equation}
which is the generating function of rooted, non-oriented bipartite
maps $M$. The variables $t,u,v,z$ mark the number of edges, black
vertices, white vertices and faces, respectively so that $K_n^{i,j,k}$
denotes the number of rooted non-oriented bipartite maps with $n$ edges,
$i$ black vertices, $j$ white vertices and $k$ faces (the root vertex
is black by convention). Note that due to the Euler relation we can
rewrite $\eta(t,z,u,v)$ so that it is parametrized by the number of
edges and genus:
\[ \eta(t,z,u,v) := \sum_{n \geq
                  1}\sum_{g \geq 0}\frac{K_n^g(u,v,z)}{2n}t^{n}, \ \
                \text{where} \ \ K_n^g(u,v,z) := \sum_{i+j+k = n+2-2g}
                K_n^{i,j,k}u^iv^jz^k.\]
We additionally set $\mathfrak{k}_n^g := K_n^g(1,1,1)$ for the number of
              rooted non-oriented bipartite maps of genus $g$ with $n$
              edges and $\mathfrak{b}_n^{i,j} :=
K_n^{i,j,1}$ for the number of
              rooted non-oriented bipartite maps of genus $g$ with $n$
              edges, $i$
              black and $j$ white vertices, and only one face.   

In analogy with \cref{prop:relGlambda} we express $\Gth_\lambda$ in
terms of $\frac{\partial^i}{\partial t^i} \eta(t,z,u,v)$, where 
$$
\Gth_\lambda \equiv \Gth_\lambda(t,z,u,v) := \theta (G_\lambda) = \theta
\left(\prod_{j=1}^k \frac{\partial}{\partial p_{i_j}}  G(t,\pp,u,v)\right)
$$
for a sequence of non-negative integers $\lambda=(i_1,\dots,i_k)$.

\begin{proposition}\label{prop:relGlambdaBip}
For $i\geq 0$ and $n_1,n_2,n_3\geq 0$, one has the recurrence relation 
\begin{multline} \label{eq:relGlambdaBip}
\frac{(i+1)\Gth_{i+1,3^{n_3},2^{n_2},1^{n_1}}}{t} \mkern-5mu=\mkern-5mu
2\mkern-8mu\sum_{a+b=i\atop a,b\geq 1} \sum_{l_i=0\atop 
    i=1,2,3}^{n_i} \mkern-8mu ab\binom{n_1}{l_1} \binom{n_2}{l_2}
\binom{n_3}{l_3} \Gth_{a,3^{l_3},2^{l_2},1^{l_1}} \Gth_{b,3^{n_3-l_3},
  2^{n_2-l_2}, 1^{n_1-l_1}}\\
+2\sum_{\substack{a+b=i\\a,b\geq 1}} ab\Gth_{a,b,3^{n_3},2^{n_2},1^{n_1}}+\sum_{j=1}^3n_j(i+j)\Gth_{i+j,3^{n_3-\delta_{3,j}}, 2^{n_2-\delta_{2,j}}, 1^{n_1-\delta_{1,j}}}+ t\frac{\partial}{\partial t} \Gth_{3^{n_3},2^{n_2},1^{n_1}} \\
\sminus{}(n_1\splus{}2n_2\splus{}3n_3) \Gth_{3^{n_3},2^{n_2},1^{n_1}} \sminus{} z\mkern-8mu \sum_{a=1}^i\mkern-5mu
a\Gth_{a,3^{n_3},2^{n_2},1^{n_1}} \splus{} (u+v+i)i \Gth_{i,3^{n_3},2^{n_2},1^{n_1}}\splus{} \tfrac{uv}{2}\delta_{i,0}\delta_{n_1,0}\delta_{n_2,0}\delta_{n_3,0},
\end{multline}
	with the convention that $\Gth_{3^0,2^0,1^0}=\Gth_\emptyset=\eta(t,z,u,v)$.

	In particular, for any partition
	$\lambda$ of the form $\lambda=[\ell,3^{n_3},2^{n_2},1^{n_1}]$ and of size $|\lambda| =\ell+n_1+2n_2+3n_3$, there exists a polynomial
$Q_{\lambda}$ in $\{\frac{\partial^i}{\partial t^i}\eta(t,z,u,v): 1
\leq i \leq |\lambda|\}$ with coefficients in $\mathbb{Q}[t,u,v,z]$
which is linear for $\ell\leq 2$ and $|\lambda|\leq 5$, and
quadratic for $\ell \geq 3, 4 \leq |\lambda|\leq 6$ and satisfies
\begin{align} \label{eq:Glambdadiff}
	\Gth_\lambda = Q_\lambda\left(\frac{\partial}{\partial t}\eta(t,z,u,v), \dotsc, \frac{\partial^{|\lambda|}}{\partial t^{|\lambda|}}\eta(t,z,u,v)\right).
\end{align}
\end{proposition}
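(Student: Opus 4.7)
The plan is to mirror exactly the strategy used for \cref{prop:relGlambda} in the non-bipartite case, replacing the Virasoro operators by their bipartite analogues. The starting point is the Virasoro constraints for the bipartite tau function $\tau(t,\pp,u,v) = \exp G(t,\pp,u,v)$, which are proved in~\cite{BonzomChapuyDolega2021} and take the form $L_i \tau = 0$ for $i\geq 0$, where
\[
L_i = \frac{p_{i+1}^*}{t} - \Bigl(2\sum_{a+b=i,\,a,b\geq 1}p_a^* p_b^* + \sum_{a\geq 1}p_a p_{a+i}^* + (u+v+i)\, p_i^* + \tfrac{uv}{2}\delta_{i,0}\Bigr).
\]
The shift in indices (starting at $i\geq 0$ rather than $i\geq -1$) and the replacement of the $u$-dependent terms reflect the fact that the edge-weighting in $G$ uses $t^{e(M)}$ (not $t^{2e(M)}$) and that there are two colors of vertices. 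Written in terms of $G=\log\tau$, this reads
\[
(i+1)G_{i+1} = t\Bigl(2\sum_{a+b=i}ab(G_{a,b}+G_a G_b) + \sum_{a\geq 1}p_a(i+a) G_{i+a} + (u+v+i)\,i\,G_i + \tfrac{uv}{2}\delta_{i,0}\Bigr).
\]

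Then I would apply $\frac{\partial^{n_1+n_2+n_3}}{\partial p_1^{n_1}\partial p_2^{n_2}\partial p_3^{n_3}}$ to both sides, then the operator $\theta$, and use Leibniz's rule on the quadratic term $G_a G_b$ to produce the binomial coefficients $\binom{n_i}{l_i}$ in~\eqref{eq:relGlambdaBip}. The term $\sum_a p_a (i+a) G_{i+a,\dots}$ is handled exactly as in the maps case: after differentiating and applying $\theta$, the homogeneity identity $\sum_{k\geq 1}p_k p_k^* G = t\frac{\partial G}{\partial t}$ (which follows from the fact that the edge weighting $t^{e(M)}\prod p_{\deg f}$ is homogeneous of weighted degree $t\cdot\prod$) yields
\[
\theta\sum_{a\geq 1}p_a(i+a) G_{i+a,3^{n_3},2^{n_2},1^{n_1}} = t\tfrac{\partial}{\partial t}\Gth_{3^{n_3},2^{n_2},1^{n_1}} - (n_1+2n_2+3n_3)\Gth_{3^{n_3},2^{n_2},1^{n_1}} - z\sum_{a=1}^i a\Gth_{a,3^{n_3},2^{n_2},1^{n_1}}.
\]
Assembling all contributions gives~\eqref{eq:relGlambdaBip}.

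For the second part, I would induct on $|\lambda|$. The key observation, identical to the maps case, is that every $\Gth$-term appearing on the RHS of~\eqref{eq:relGlambdaBip} has strictly smaller total size than the LHS, and moreover the only way a RHS term could contain two parts strictly larger than $3$ is through $\Gth_{a,b,3^{n_3},2^{n_2},1^{n_1}}$ with $a+b=i$, which by $\ell=i+1$ requires $\ell \geq $ some bound — so iterating the recurrence keeps us inside the family of shapes $[\ell,3^{n_3},2^{n_2},1^{n_1}]$ as long as the values of $\ell$ and $|\lambda|$ considered remain modest (the precise ranges $\ell\leq 2,|\lambda|\leq 5$ for linear and $\ell\geq 3, 4\leq |\lambda|\leq 6$ for quadratic are verified by a finite direct check, which can be automated as in the Maple worksheet~\cite{us:Maple}). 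The induction base is $\Gth_\emptyset = \eta$.

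The main obstacle — really the only one — is the bookkeeping needed to verify the precise linear/quadratic degree statements for the small partitions claimed. This is a finite direct check rather than a genuine mathematical difficulty: each recursive step adds at most one multiplicative factor of a derivative, so the degree of the resulting polynomial $Q_\lambda$ grows only when the recursion invokes the quadratic term $\Gth_a \Gth_b$, and one just has to follow which $(\ell,|\lambda|)$ regimes trigger that term versus stay linear. As in the maps case, the authors rightly point out that one can prove a stronger statement (every $\Gth_\lambda$ is a differential polynomial in $\eta$) by a straightforward induction applied to the fully-iterated Virasoro constraint, but this generality is not needed here.
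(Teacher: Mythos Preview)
Your proposal is correct and follows exactly the approach the paper takes: the paper's proof of \cref{prop:relGlambdaBip} simply says ``identical to the proof of \cref{prop:relGlambda}, replacing \cref{prop:VirasoroMaps} by its bipartite counterpart \cref{prop:VirasoroBipMaps},'' and you have spelled out precisely those steps (Virasoro constraint for $G$, differentiate in $p_1,p_2,p_3$, apply $\theta$, use homogeneity, then induct on $|\lambda|$ and check degrees by hand). There is nothing to add.
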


The proof is identical to the proof of \cref{prop:relGlambda} (and left to the reader) with the
only difference being in replacing \cref{prop:VirasoroMaps} by its
bipartite counterpart:

\begin{proposition}{\cite[Proposition A.1]{BonzomChapuyDolega2021}}
  \label{prop:VirasoroBipMaps}
	We have $L_i \tau(t,\pp,u,v) =0$ for $i\geq 0$, where $(L_i)_{i\geq 0}$ are given by
\begin{align*}
         L_i &= \frac{p_{i+1}^*}{t}
-\bigg(2\sum_{\substack{a,b\geq1\\a+b=i}}p_a^*p_b^*+ \sum_{a \geq 1} p_a p^*_{a+i} + (i+u+v)p_i^*+\frac{uv\delta_{i,0}}{2}\bigg).
\end{align*}
\end{proposition}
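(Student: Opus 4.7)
The plan is to prove the bipartite Virasoro constraints by translating $L_i\tau=0$ into a combinatorial identity for rooted non-oriented bipartite maps, via a Tutte-style root-edge decomposition. First I would rewrite the equation in terms of $G=\log\tau$: since $p_k^\ast\tau=(k\,\partial_{p_k}G)\tau$ and $p_a^\ast p_b^\ast\tau=ab(G_{a,b}+G_aG_b)\tau$, dividing $L_i\tau=0$ by $\tau$ yields
\[
\tfrac{(i+1)G_{i+1}}{t}=2\!\!\sum_{a+b=i}\!\!ab\bigl(G_{a,b}+G_aG_b\bigr)+\sum_{a\ge1}p_a(a+i)G_{a+i}+(i+u+v)\,iG_i+\tfrac{uv}{2}\delta_{i,0}.
\]
The left-hand side, after multiplying through by $t$, generates pointed bipartite maps with a distinguished face of degree $i+1$ (the rooted face), while the right-hand side is precisely what one gets by classifying what happens after removing the edge at the root corner.

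The key step is to enumerate the cases of root-edge deletion. If the root edge is not a bridge, it merges the rooted face (of degree $i+1$) with another face; depending on whether that other face is distinct or equal (and, in the non-oriented case, whether a cross-cap appears), one gets either the $\sum p_a(a+i)G_{a+i}$ term (distinct face of degree $a$ swallowed) or the connected two-face contribution $2\sum ab\,G_{a,b}$. If the root edge is a bridge, it separates the map into two bipartite maps, and the $2\sum ab\,G_aG_b$ term counts the two ordered pieces. The linear $(i+u+v)iG_i$ piece bookkeeps the remaining low-order corrections: $i\cdot iG_i$ for merging with itself in the non-bridge-loop regime, $u\cdot iG_i$ and $v\cdot iG_i$ for the two possibilities of the edge being a pendant edge terminating at a new leaf (black or white) vertex. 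Finally, the $uv\delta_{i,0}/2$ term is the base case of a single edge joining a black and a white vertex (which constitutes a map with one edge, one black vertex, one white vertex, and one face of degree~$1$).

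The main obstacle is the bookkeeping of the non-oriented cases (cross-cap-creating edges) and making the identification between each combinatorial contribution and the precise coefficient in $L_i$, including the symmetry factors $2$ in front of $\sum p_a^\ast p_b^\ast$ and the factor $1/2$ in front of $uv\delta_{i,0}$; these arise from the root corner convention in $G$ and from the ordered-pair counting in the bridge case. An alternative, perhaps cleaner, route would be to derive the Virasoro constraints from the matrix-integral representation of $\tau$ used in \cite{VandeLeur2001}: invariance of the Haar measure on the appropriate symmetric matrix ensemble under the infinitesimal change of variables $M\mapsto M+\epsilon M^{i+1}$ produces an identity which, after expansion in traces $p_k=\operatorname{Tr}M^k$, is exactly $L_i\tau=0$. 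This algebraic route bypasses the non-oriented case analysis entirely, at the cost of invoking the matrix model; either way, the result is the same as the one obtained in \cite[Proposition A.1]{BonzomChapuyDolega2021}, which one may cite directly.
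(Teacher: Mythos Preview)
The paper does not prove this proposition at all; it is stated as a citation of \cite[Proposition~A.1]{BonzomChapuyDolega2021} and used as a black box. Your closing remark that one may cite the reference directly is therefore exactly what the paper does, and is sufficient.

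Your two sketched routes are both reasonable in spirit, but a word of caution on the combinatorial one. The root-edge-deletion story you describe is really the picture for \emph{general} maps (Proposition~\ref{prop:VirasoroMaps}); for bipartite maps the conventions are different. In the bipartite generating function $G$, the variable $p_k$ marks faces of \emph{half}-degree $k$ (number of edges on the boundary), so the face marked by $p_{i+1}^\ast$ has $i+1$ edges on it, not $i+1$ corners. More importantly, your reading of the $(i+u+v)p_i^\ast$ term is off: the pendant-edge case does \emph{not} give separate $u$ and $v$ contributions, since in a bipartite map the colour of a new leaf is forced by the colour of its neighbour. The $(u+v)$ piece instead comes from the two ways the root edge can be incident to a vertex of degree~$1$ (either endpoint), and the extra $i$ is the $b=1$ non-oriented correction (twisted identifications), not a ``merging with itself'' count. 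None of this is fatal, but a correct combinatorial proof in the bipartite non-oriented setting requires more care than your sketch suggests.

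The matrix-integral route you outline (Schwinger--Dyson equations from the reparametrisation $M\mapsto M+\epsilon M^{i+1}$ of a $\beta=1$ Wishart-type ensemble) is indeed the method used in the cited appendix, so that alternative is on solid ground.
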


\begin{theorem}[Counting bipartite maps by black/white vertices, faces, and genus]  \label{thm:BipMapsRecurrence}

	The generating polynomial 
	$$K_n^g\equiv K_n^g(u,v,z) = \sum_{i+j+k=n+2-2g}K_n^{i,j,k}u^iv^jz^k$$
	of rooted non-oriented bipartite maps of genus $g$ with $n$
        edges, with weight $u$ per black vertex, $v$ per white vertex and $z$ per face, can be computed from the following recurrence formula:
	\begin{multline*} 
	K_n^g
=
\tfrac{1}{(n+1)}\!
	 \big((2n-1) \big((u + v + z)K_{n-1}^{g}-K_{n-1}^{g-1/2}\big)-
                \psi_n(u,v,z)K_{n-2}^{g}
+(2n - 1)(2n - 3)nK_{n-2}^{g-1}\\
	-6 (n - 1)(u + v - z)K_{n-2}^{g-1/2}
+2\mkern-8mu\hsum_{g_1=0..g\atop g_1+g_2=g} \sum_{n_1=0..n \atop n_1+n_2=n} 
\mkern-8mu(6n_1n_2 - 2(n_1+n_2) + 1) K_{n_2-1}^{g_2}K_{n_1-1}^{g_1}  \Big) 
 \\
-
\mkern-8mu\hsum_{g_1=0..g\atop g_1+g_2=g} \sum_{n_1=1..n-1\atop n_1+n_2=n}
	 \hsum_{g_0=0..g_1\atop g_1-g_0\in \mathbb{N}}\mkern-8mu
	 \tfrac{2^{2 (1+g_1-g_0)}}{(n-2)(n+1)}\mkern-8mu\sum_{p+q+k=\atop n_1+2-2g_0} \sum_{i=0}^{n_1-2g_1-k}{\textstyle {p \choose i}{q \choose n_1-2g_1-k-i}}u^{i}v^{n_1-2g_1-k-i}z^k K_{n_1}^{p,q,k}   
\\
\Big(\sminus{}(n_2+1)K_{n_2}^{g_2} \splus{}
		(2n_2-1) \big((u + v + z)K_{n_2-1}^{g_2}\sminus{}K_{n_2-1}^{g_2-1/2}\big)\splus{}(2n_2 - 1)(2n_2 - 3)n_2
         K_{n_2-2}^{g_2-1}\\
         -6 (n_2 - 1)(u \splus{} v \sminus{}
         z)K_{n_2-2}^{g_2-1/2}+\delta_{n_2,1}\delta_{g_2,0}2uvz+6uv\delta_{n_2,2}(\delta_{g_2,0}uv
         \sminus{} \delta_{g_2,1/2}(u\splus{}v\sminus{}z) \splus{} \delta_{g_2,1}) 
\\ 
-
	 \psi_{n_2}(u,v,z)K_{n_2-2}^{g_2} + 2\hsum_{g_3=0..g\atop g_3+g_4=g_2} \sum_{n_3=0..n_2 \atop n_3+n_4=n_2} 
(6n_3n_4 - 2(n_3+n_4) + 1) K_{n_3-1}^{g_3}K_{n_4-1}^{g_4}  
\Big)
	\end{multline*}
	for $n>2$, with
        \[\psi_n(u,v,z) := (n-2)(u^2 + v^2 + z^2 - 14uv - 2uz -
          2vz)-12uv\]
        and the initial conditions $K_0^g=0$, $K_1^0=uvz$, $K_2^0=uvz(u+v+z)$, $K_{1}^{1/2}=0$, $K_2^{1/2}=uvz$, $K_2^1=0$, and $K_n^g=0$ if $n<2g$.
      \end{theorem}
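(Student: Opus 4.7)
The proof should closely parallel the argument for maps given in Theorem~\ref{thm:MapsRecurrence}. The plan is to exploit Proposition~\ref{prop:BKPBipmaps}, which asserts that $\tau(t,2\pp,\delta,N)=\exp G(t,2\pp,N,N+\delta)$ is a formal $N$ BKP tau function with $S_2(N)=t^4 N(N+\delta)(N-1)(N+\delta-1)$, and to feed this into the first BKP equation~\eqref{BKP1Log}. Since shifting $N\to N\pm 2$ in the tau function corresponds to shifting $u\to u\pm 2$ and $v\to v\pm 2$ simultaneously (because $u=N$ and $v=N+\delta$), applying $\theta$ to~\eqref{BKP1Log} will produce a functional equation in $\eta(t,z,u,v)$ involving the combination $\eta(t,z,u+2,v+2)+\eta(t,z,u-2,v-2)-2\eta(t,z,u,v)$ multiplied by the bipartite analog
\[
\operatorname{KP1}_{\text{bip}} = -4\Gth_{3,1}+4\Gth_{2^2}+\tfrac{4}{3}\bigl(6(\Gth_{1^2})^2+\Gth_{1^4}\bigr),
\]
equated to an exponential prefactor $S_2(N)\exp(\cdots)$. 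Taking a $t$-derivative and substituting~\eqref{BKP1Log} back into itself to eliminate the exponential, and using the identity $\tfrac{\partial}{\partial t}S_2=\tfrac{4}{t}S_2$ (which still holds here), yields the bipartite analog of~\eqref{eq:SmallOdeMaps}, namely a single functional equation of the shape
\[
\tfrac{\partial}{\partial t}\bigl(\eta(u+2,v+2)+\eta(u-2,v-2)-2\eta(u,v)\bigr)\cdot\operatorname{KP1}_{\text{bip}}=\tfrac{\partial}{\partial t}\operatorname{KP1}_{\text{bip}}-\tfrac{4}{t}\operatorname{KP1}_{\text{bip}}.
\]

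Next, the terms $\Gth_{3,1},\Gth_{2^2},\Gth_{1^2},\Gth_{1^4}$ appearing in $\operatorname{KP1}_{\text{bip}}$ all have $\ell\le 3$ and $|\lambda|\le 4$, so Proposition~\ref{prop:relGlambdaBip} expresses each of them explicitly as a (quadratic) differential polynomial in the single variable $\eta$, with coefficients in $\mathbb{Q}[t,u,v,z]$. After this substitution, the preceding equation becomes a closed functional/differential equation for $\eta(t,z,u,v)$ and its shifts in the charge variables.

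To turn this equation into the recurrence of the theorem, I would make the homogenizing substitution $u\to ur$, $v\to vr$, $z\to zr$ (which tracks genus through the power of $r$) and extract the coefficient of $[t^{n}r^{n+2-2g}]$. The shifts unfold via a two-variable binomial expansion
\[
[r^m]\,\eta(t,zr,ur+2,vr+2)=\sum_{p,q,k}\sum_{i,j}\binom{p}{i}\binom{q}{j}2^{p+q-i-j}u^i v^j z^k [u^pv^qz^k]\eta
\]
with an analogous formula with $-2$'s; the sum $(+2)+(-2)-2\cdot 0$ kills odd values of $p+q-i-j$ and doubles even ones. Matching the parity gives the even exponent $p+q-i-j=2(1+g_1-g_0)$ appearing in the theorem, which explains both the binomial factors $\binom{p}{i}\binom{q}{n_1-2g_1-k-i}$ and the power $2^{2(1+g_1-g_0)}$, as well as the half-integer genus summations (encoded by the $\hsum$'s). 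The quadratic terms $(\Gth_{1^2})^2$, and the product of the shifted factor with $\operatorname{KP1}_{\text{bip}}$, produce the convolution sums in $(n_1,g_1)$ and $(n_3,g_3)$; the remaining terms, after organizing by the differential order coming from Proposition~\ref{prop:relGlambdaBip}, yield the linear expressions in $K_{n-1}^{g}$, $K_{n-1}^{g-1/2}$, $K_{n-2}^{g}$, $K_{n-2}^{g-1}$, $K_{n-2}^{g-1/2}$ and in particular the explicit polynomial $\psi_n(u,v,z)$. The boundary contributions $\delta_{n_2,1}2uvz$ and the terms at $n_2=2$ arise from the lowest-order terms produced by the Virasoro recurrence~\eqref{eq:relGlambdaBip} (the $uv\delta_{i,0}\delta_{n_1,0}\delta_{n_2,0}\delta_{n_3,0}$ source term and its iterates).

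The main obstacle will be the careful bookkeeping of the binomial expansion of the shifts, signs, and the parity constraints, together with correctly normalizing the prefactors $\tfrac{1}{2n}$ inherited from the definition of $\eta$. Once these are handled, collecting the terms $K_n^{p,q,k}$ with $p+q+k=n+2-2g$ on the right-hand side (which appear only at $n_1=n$, $g_0=g_1=g$) gives an $\tfrac{\partial^2}{\partial u\partial v}$-type contribution that, as in the proof of Theorem~\ref{thm:MapsRecurrence}, must be moved to the left-hand side; a direct calculation shows that it combines with the leading $-(n+1)K_n^g$ term into the $(n+1)K_n^g$ on the left. The initial conditions are read off from small rooted bipartite maps (or, equivalently, checked against the recurrence obtained for very small $n$ and against the OEIS/known expansions), which completes the proof.
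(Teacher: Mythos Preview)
Your proposal is correct and follows essentially the same route as the paper, which simply states that the proof is ``almost identical'' to that of Theorem~\ref{thm:MapsRecurrence}, replacing~\eqref{eq:SmallOdeMaps} by its bipartite analogue and leaving the computational details to the reader.

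One small imprecision worth flagging: since the BKP shift moves $u$ and $v$ \emph{simultaneously} (via $N\mapsto N\pm 2$ with $u=N$, $v=N+\delta$), the lowest-order shift contribution at $n_1=n,\,g_0=g_1=g$ is of type $(\partial_u+\partial_v)^2$ rather than $\partial_u\partial_v$. Concretely, from $(ur+2)^p(vr+2)^q+(ur-2)^p(vr-2)^q-2(ur)^p(vr)^q$ the total shift exponent $2$ splits as $(2,0)+(1,1)+(0,2)$, yielding $\binom{p}{2}+\,pq+\binom{q}{2}$. How this term folds back into the clean $K_n^g$ on the left and produces the two distinct prefactors $\tfrac{1}{n+1}$ and $\tfrac{1}{(n-2)(n+1)}$ (note the sum in the second block runs only over $n_1=1,\dots,n-1$, unlike the maps case) is precisely the ``careful bookkeeping'' you anticipate; your sketch of that step is slightly off, but the overall plan is sound.
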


      \begin{proof}
        The proof is almost identical to the proof of
        \cref{thm:MapsRecurrence}. The only difference is that
        \eqref{eq:SmallOdeMaps} should be replaced by
          \begin{multline*}
    \label{eq:SmallOdeMaps}
\!\Big(\frac{\partial}{\partial t}\big(\eta(t,z,u+2,v+2)+\eta(t,z,u-2,v-2)-2\eta(t,z,u,v)\big)\mkern-3mu\Big)
  \Big( \Gth_{2^2}-\Gth_{3,1} + \frac{1}{3}(6(\Gth_{1^2})^2 +\Gth_{1^4})\Big) \\
  =\frac{\partial}{\partial
    t}\Big(\Gth_{2^2}-\Gth_{3,1} + \frac{1}{3}(6(\Gth_{1^2})^2 +\Gth_{1^4})\Big) - \frac{1}{t} \Big( 4\Gth_{2^2}-4\Gth_{3,1} + \frac{4}{3}(6(\Gth_{1^2})^2 +\Gth_{1^4})\Big).
\end{multline*}
The computational details are left
to the reader.
        \end{proof}

As in the case of maps, it is possible to manipulate the first three BKP equations and obtain an ordinary differential equation on $\eta(t,z,u,v)$. In particular, it does not involve any shifts on the variables $u$ and $v$.

\begin{theorem}
  \label{theo:ODEBipMaps}
There exists a polynomial
$Q \in \mathbb{Q}[t,u,v,z][x_1,\dots,x_6]$ of degree $5$ such that
\[ Q\left(\frac{\partial}{\partial t}\eta(t,z,u,v),\dots,
    \frac{\partial^6}{\partial t^6}\eta(t,z,u,v)\right) \equiv 0.\]
An explicit form of $Q$ can be obtained by applying 
Proposition~\ref{prop:relGlambdaBip} to the following equation
    \begin{multline}
          \label{eq:ODEBipMaps}
t^4\Bigl(\frac{\partial}{\partial t}\operatorname{KP1}\Bigr)^2-\Bigl(\operatorname{KP2}\Bigr)^2+\operatorname{KP1}\Bigg(\operatorname{KP3}-\frac{1}{2}\Bigl(t(u+v+1-z)+1\Bigr)\theta\operatorname{KP2}\\
 -\Bigl(t^4 \frac{\partial^2}{\partial t^2}+4t^3 \frac{\partial}{\partial t}+ 2t^4 \frac{\partial^2}{\partial t^2}\eta +
 8t^3 \frac{\partial}{\partial t}\eta+3uvt^2-(u+v)t\Bigr)\operatorname{KP1}\Bigg)\equiv 0
  \end{multline}
where
\begin{align*}
 \operatorname{KP1} &= -4\Gth_{3,1} + 4\Gth_{2^2} + \frac{4}{3}(6(\Gth_{1^2})^2 +\Gth_{1^4}),\\
\operatorname{KP2} &= -4\Gth_{4,1} + 4\Gth_{3,2} + \frac{8}{3}(6\Gth_{2,1}\Gth_{1^2}
                     +\Gth_{2,1^3}),\\
\operatorname{KP3}&=-6\Gth_{5,1}+ 4\Gth_{4,2}+2\Gth_{3,3}+
                    \frac{8}{3}(6\Gth_{3,1}\Gth_{1^2}
                    +\Gth_{3,1^3})+4(4(\Gth_{2,1})^2+2\Gth_{2^2}\Gth_{1^2} +
                    \Gth_{2^2,1^2})\\
  &+ \frac{4}{45}(60(\Gth_{1^2})^3 + 30\Gth_{1^4}\Gth_{1^2} + \Gth_{1^6}),
\end{align*}
and $\eta \equiv \eta(t,z,u,v)$.
An explicit form of this ODE can be found in the accompanying Maple worksheet~\cite{us:Maple}.
\end{theorem}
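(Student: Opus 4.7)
The plan is to carry out the bipartite analogue of the proof of Theorem~\ref{theo:ODEMaps}. The key observation is that in Proposition~\ref{prop:BKPBipmaps} the BKP tau function is $\tau(t,2\pp,\delta,N)=\exp G(t,2\pp,N,N+\delta)$, so the charge shifts $N\to N\pm 2$ act on $\eta$ by the simultaneous shifts $(u,v)\to(u\pm 2,v\pm 2)$. I therefore set $\Delta f(u,v)=f(u+2,v+2)-f(u-2,v-2)$, $\nabla f(u,v)=f(u+2,v+2)+f(u-2,v-2)$, and
\[ E:=S_2(u,v)\,e^{\nabla\eta-2\eta},\qquad S_2(u,v)=t^4 uv(u-1)(v-1). \]

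First, I apply $\theta$ to the three BKP equations \eqref{BKP1Log}, \eqref{BKP2Log}, \eqref{BKP3Log} to obtain the three master identities
\[ E=\operatorname{KP1},\qquad E\,\Delta(\Gth_1)=\operatorname{KP2},\qquad E\bigl(\nabla(\Gth_{1^2})+\Delta(\Gth_2)+(\Delta\Gth_1)^2\bigr)=\operatorname{KP3}, \]
with the numerical prefactors of $\operatorname{KP1},\operatorname{KP2},\operatorname{KP3}$ coming from the rescaling $\pp\to 2\pp$ exactly as in the maps case. Then using Proposition~\ref{prop:relGlambdaBip} I express $\Gth_1$, $\Gth_2$, $\Gth_{1^2}$ and their shifts $\Delta\Gth_1$, $\Delta\Gth_2$, $\nabla\Gth_{1^2}$ as $t$-derivatives of $\eta$ (respectively of $\Delta\eta$, $\nabla\eta$), plus explicit polynomial corrections in $t,u,v,z$. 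These are the bipartite counterparts of the identities preceding \eqref{IntermediateBKP3}, now involving $t$ rather than $t^2$ since the bipartite Virasoro operators of Proposition~\ref{prop:VirasoroBipMaps} contain $p_{i+1}^*/t$ instead of $p_{i+2}^*/t^2$.

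Next I eliminate the three remaining shift quantities $\nabla\frac{\partial}{\partial t}\eta$, $\nabla\frac{\partial^2}{\partial t^2}\eta$ and $\Delta\frac{\partial}{\partial t}\eta$ from the third master identity. The second identity gives $E\,\Delta\frac{\partial}{\partial t}\eta$ as $\operatorname{KP2}$ modulo an explicit $u,v,z,t$-polynomial; differentiating $E=\operatorname{KP1}$ in $t$ (and using $\frac{\partial_t S_2}{S_2}=\frac{4}{t}$, which is identical to the maps case) yields $E\,\nabla\frac{\partial}{\partial t}\eta$ in terms of $\operatorname{KP1}$ and $\frac{\partial}{\partial t}\operatorname{KP1}$; one further $t$-differentiation, multiplied by $E$, gives $E^2\,\nabla\frac{\partial^2}{\partial t^2}\eta$ as a polynomial in $\operatorname{KP1}$ and its first two $t$-derivatives. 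Substituting everything into the third master identity and multiplying through by $E^2=\operatorname{KP1}^2$ collapses it to the shift-free polynomial relation \eqref{eq:ODEBipMaps}.

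Finally I invoke Proposition~\ref{prop:relGlambdaBip} once more to replace every $\Gth_\lambda$ appearing in $\operatorname{KP1},\operatorname{KP2},\operatorname{KP3}$ by a polynomial in $\frac{\partial^i}{\partial t^i}\eta$ with $1\le i\le|\lambda|\le 6$, turning \eqref{eq:ODEBipMaps} into the sought polynomial relation $Q(\frac{\partial}{\partial t}\eta,\ldots,\frac{\partial^6}{\partial t^6}\eta)\equiv 0$ of total degree $5$ (the factor $\operatorname{KP1}$ pairs with at most a quartic combination of $\operatorname{KP1},\operatorname{KP2},\operatorname{KP3}$ and their $t$-derivatives). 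The main difficulty is the algebraic bookkeeping: the polynomial corrections in $u,v,z,t$ coming from Proposition~\ref{prop:relGlambdaBip} must be tracked carefully to recover the precise coefficients of \eqref{eq:ODEBipMaps}, in particular the correction term $\tfrac{1}{2}\bigl(t(u+v+1-z)+1\bigr)\theta\operatorname{KP2}$, which is absent in the maps version \eqref{eq:ODEMaps} and reflects both the different normalization $p_{i+1}^*/t$ of the bipartite Virasoro constraints and the different form of the factor $S_2(u,v)$.
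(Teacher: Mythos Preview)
Your proposal is correct and follows exactly the route the paper intends: the paper's own proof of this theorem reads in full ``The proof is analogous to the proof of \cref{theo:ODEMaps} and left to the reader,'' and you have carried out precisely that analogy, with the correct identification of the simultaneous shift $(u,v)\to(u\pm 2,v\pm 2)$, the correct $S_2(u,v)=t^4uv(u-1)(v-1)$ (which indeed still satisfies $\partial_t S_2/S_2=4/t$), and the correct use of the bipartite Virasoro recursion from Proposition~\ref{prop:relGlambdaBip}.

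One small inaccuracy in your closing commentary: the $\operatorname{KP2}$ correction term is not \emph{absent} in the maps equation~\eqref{eq:ODEMaps}; it is present there as $-\tfrac{1}{2}\operatorname{KP2}$. What changes in the bipartite case is only the coefficient, which becomes $-\tfrac{1}{2}\bigl(t(u+v+1-z)+1\bigr)$. This arises because in the bipartite setting $\Delta(\Gth_2)$ is no longer a constant multiple of $\Delta(\Gth_1)$ but picks up the factor $t(u+v+1-z)+1$ from the $i=1$ Virasoro constraint of Proposition~\ref{prop:VirasoroBipMaps}. Your diagnosis of the origin (the $p_{i+1}^*/t$ normalization) is right; only the phrasing ``absent'' should be ``has a more complicated coefficient.''
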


The proof is analogous to the proof of \cref{theo:ODEMaps} and left to the reader. We have two immediate corollaries, \cref{thm:rec-Bipmaps-nonshifted} which is analogous to \cref{thm:rec-maps-nonshifted} for bipartite maps, and \cref{thm:BipartiteLedoux} which is a bipartite analogue of Ledoux's recurrence and a non-oriented analogue of Adrianov's.

\begin{theorem}[Counting bipartite maps by edges and genus -- unshifted recurrence]\label{thm:rec-Bipmaps-nonshifted}
The number $\mathfrak{b}_n^g$ of rooted bipartite maps of genus $g$ with $n$ edges, orientable or not, 
is solution of an explicit recurrence relation of the form
	\begin{multline}\label{eq:rec-bipmaps-nonshifted}
		\mathfrak{b}_n^g = \frac{1}{2(n+1)}\bigg(\sum_{n_1=1..n\atop
                   n_1+n_2=n+1}\sum_{g_1=0..g\atop g_1+g_2=g}\delta_{(n_1,g_1)\neq(n,g)}\delta_{(n_2,g_2)\neq(n,g)}
          (n_1+1)(n_2+1)\mathfrak{b}_{n_1}^{g_1}\mathfrak{b}_{n_2}^{g_2}\\
          +\sum_{n_1=1..n-1\atop
                   n_1+n_2=n} (6n_1n_2+5(n_1+n_2)+4)\big(\sum_{g_1=0..g-1\atop g_1+g_2=g-1}\mathfrak{b}_{n_1}^{g_1}\mathfrak{b}_{n_2}^{g_2}-3 \sum_{g_1=0..g\atop g_1+g_2=g}\mathfrak{b}_{n_1}^{g_1}\mathfrak{b}_{n_2}^{g_2}\big)\bigg)\\
          + \sum_{a=1}^{K_1}\hsum_{b=0}^{K_2} \sum_{k=1}^{K_3}
            \sum_{n_1,\dots,n_k \geq 1 \atop n_1+\cdots + n_k =n-a}
            \hsum_{g_1,\dots,g_k \geq 0 \atop  g_1+\dots + g_k =g-b}
	Q_{a,b,k}(n_1,\dots,n_k) \mathfrak{b}_{n_1}^{g_1} \mathfrak{b}_{n_2}^{g_2}\dots \mathfrak{b}_{n_k}^{g_k},
	\end{multline}
	where the $Q_{a,b,k}$ are rational functions and $K_1,K_2,K_3 <\infty$.
      \end{theorem}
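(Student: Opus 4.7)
The plan is to mimic the derivation of Theorem~\ref{thm:rec-maps-nonshifted} from Theorem~\ref{theo:ODEMaps}, with the ODE~\eqref{eq:ODEMaps} replaced by the bipartite analogue~\eqref{eq:ODEBipMaps} and Proposition~\ref{prop:relGlambda} replaced by Proposition~\ref{prop:relGlambdaBip}. First, in~\eqref{eq:ODEBipMaps} I would perform the change of variables $u \to ur$, $v \to vr$, $z \to zr$. By the Euler relation for non-oriented bipartite maps ($i+j+k = n+2-2g$), the variable $r$ then tracks the Euler characteristic: the coefficient of $r^{n+2-2g}$ in $\eta(t, rz, ru, rv)$ isolates the genus-$g$ contribution with $n$ edges. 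Using Proposition~\ref{prop:relGlambdaBip}, every $\Gth_\lambda$ appearing in $\operatorname{KP1}$, $\operatorname{KP2}$, $\operatorname{KP3}$ (all with $|\lambda|\leq 6$) can be rewritten as an explicit (at most quadratic) polynomial in the $t$-derivatives of $\eta(t,rz,ru,rv)$ with coefficients in $\mathbb{Q}[t,u,v,z,r]$.

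Second, I would extract the coefficient of $[t^n r^{n+2-2g}]$ from the resulting ODE, which yields an algebraic relation among the polynomials $K_{n'}^{g'}(u,v,z)$ for indices with $(n',g') \preceq (n,g)$. Specializing $u=v=z=1$ collapses everything to an identity among the integers $\mathfrak{b}_{n'}^{g'}$, of the general form
\[
\sum_{a, b, k} \sum_{\substack{n_1+\dots+n_k = n-a \\ g_1+\dots+g_k = g-b}} \widetilde{Q}_{a,b,k}(n_1,\dots,n_k)\, \mathfrak{b}_{n_1}^{g_1}\cdots \mathfrak{b}_{n_k}^{g_k} = 0,
\]
with the outer sums finite because~\eqref{eq:ODEBipMaps} has finite degree in $t$, in the derivatives of $\eta$, and in $u,v,z$.

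Third, to reach the explicit shape of~\eqref{eq:rec-bipmaps-nonshifted}, one must isolate the coefficient of $\mathfrak{b}_n^g$ and verify it equals $-2(n+1)$ (up to an overall nonzero rational factor that can be divided out), allowing one to solve for $\mathfrak{b}_n^g$. This comes from the top-$t$-degree part of $\operatorname{KP1}$, which contains a linear term $\theta F_{1^4}$ contributing $t^4 \partial_t^4 \eta$ via~\eqref{eq:relGlambdaBip}, and then extracting $[t^n r^{n+2-2g}]$ produces the factor $n(n-1)(n-2)(n-3)\cdot\text{const}$; combined with the other top-degree terms from $\operatorname{KP2}$, $\operatorname{KP3}$ and the products in~\eqref{eq:ODEBipMaps} one obtains the claimed leading factor $2(n+1)$. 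Similarly, the explicit quadratic terms written out in~\eqref{eq:rec-bipmaps-nonshifted} come from the next-to-top-degree contributions and are identified by direct inspection.

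The main obstacle is not conceptual but computational: the ODE~\eqref{eq:ODEBipMaps} is large, and isolating both the leading coefficient and the two explicit quadratic blocks in~\eqref{eq:rec-bipmaps-nonshifted} requires a careful bookkeeping of contributions from each of the three BKP equations as well as from the product $\operatorname{KP1}\cdot\theta\operatorname{KP2}$ and $(\operatorname{KP2})^2$. This step is most reliably carried out with computer algebra, and is performed in the accompanying Maple worksheet~\cite{us:Maple}, which also provides the initial conditions needed to launch the recursion.
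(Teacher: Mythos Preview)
Your approach is correct and matches the paper's: the paper states this theorem as an immediate corollary of Theorem~\ref{theo:ODEBipMaps}, proved exactly as Theorem~\ref{thm:rec-maps-nonshifted} is proved from Theorem~\ref{theo:ODEMaps}, namely by the substitution $u\to ur$, $v\to vr$, $z\to zr$, coefficient extraction, specialization $u=v=z=1$, and a Maple check of the highest-degree terms. Two small inaccuracies in your heuristic paragraph: the relevant quantity is $\Gth_{1^4}$ (not $\theta F_{1^4}$), and via Proposition~\ref{prop:relGlambdaBip} its leading contribution is $t^8\partial_t^4\eta$ rather than $t^4\partial_t^4\eta$; since you (like the paper) ultimately defer the identification of the leading coefficient and the two explicit quadratic blocks to the Maple worksheet, these slips do not affect the validity of the argument.
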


\begin{theorem}[A recurrence for non-oriented bipartite one-face maps]\label{thm:BipartiteLedoux}
	The number $\mathfrak{b}_{n}^{i,j}$ of
rooted one-face maps with $n$ edges, $i$ white and $j$ black
vertices, orientable or not, is given by the recursion:
\begin{align}
	(n\splus{}1) \mathfrak{b}_{n}^{i,j} = &\hphantom{+} 
	(4n\sminus{}1)(\mathfrak{b}_{n\sminus{}1}^{i-1,j}\splus{}\mathfrak{b}_{n\sminus{}1}^{i,j-1}\sminus{}\mathfrak{b}_{n\sminus{}1}^{i,j})
	\splus{}(5n^3 \sminus{} 16n^2 \splus{} 13n\sminus{} 1) \mathfrak{b}_{n\sminus{}2}^{i,j}
	\nonumber\\&
\splus{}(2n \sminus{} 3)(4\mathfrak{b}_{n\sminus{}2}^{i-1,j} \splus{} 4\mathfrak{b}_{n\sminus{}2}^{i,j-1} \sminus{}3\mathfrak{b}_{n\sminus{}2}^{i-2,j} \splus{} 3\mathfrak{b}_{n\sminus{}2}^{i,j-2}\sminus{} 2 \mathfrak{b}_{n\sminus{}2}^{i-1,j-1}) 
	\nonumber\\&
\splus{}(10n^3 \sminus{} 68n^2 \splus{} 150n \sminus{}107)(\mathfrak{b}_{n\sminus{}3}^{i,j}\sminus{}\mathfrak{b}_{n\sminus{}3}^{i-1,j}\sminus{}\mathfrak{b}_{n\sminus{}3}^{i,j-1})
	\nonumber\\&
\splus{}(4n	\sminus{}11)(\mathfrak{b}_{n\sminus{}3}^{i-3,j}
\splus{}\mathfrak{b}_{n\sminus{}3}^{i,j-3}\sminus{}\mathfrak{b}_{n\sminus{}3}^{i-2,j-1}
\sminus{}\mathfrak{b}_{n\sminus{}3}^{i-1,j-2} \sminus{}
\mathfrak{b}_{n\sminus{}3}^{i-2,j}\sminus{}\mathfrak{b}_{n\sminus{}3}^{i,j-2} \splus{} 2
	\mathfrak{b}_{n\sminus{}3}^{i-1,j-1})\nonumber\\&
\splus{}(4\sminus{}n)((2n \sminus{} 7)^2(n \sminus{} 2)^2 \mathfrak{b}_{n\sminus{}4}^{i,j}\splus{}(5n^2 \sminus{} 32n \splus{}
53)(\mathfrak{b}_{n\sminus{}4}^{i-2,j}\splus{}\mathfrak{b}_{n\sminus{}4}^{i,j-2}\sminus{}2
	\mathfrak{b}_{n\sminus{}4}^{i-1,j-1})\nonumber\\&
	\splus{}\mathfrak{b}_{n\sminus{}4}^{i-4,j}\splus{}\mathfrak{b}_{n\sminus{}4}^{i,j-4}\sminus{}4 \mathfrak{b}_{n\sminus{}4}^{i-3,j-1}\splus{}4\mathfrak{b}_{n\sminus{}4}^{i-1,j-3}\splus{}6 \mathfrak{b}_{n\sminus{}4}^{i-2,j-2})) 
\end{align}
with the convention that $\mathfrak{b}_{n}^{i,j} =0$ for
$i+j>n+1$, and $\mathfrak{b}_{n}^{i,0}=\mathfrak{b}_{n}^{0,j}=0$ and the initial conditions $\mathfrak{b}_{1}^{1,1} =
\mathfrak{b}_{2}^{2,1} = \mathfrak{b}_{2}^{1,2} =
\mathfrak{b}_{2}^{1,1}= 1$,
$\mathfrak{b}_{3}^{3,1}=\mathfrak{b}_{3}^{1,3}=1$,
$\mathfrak{b}_{3}^{2,2}=\mathfrak{b}_{3}^{2,1}=\mathfrak{b}_{3}^{1,2}=3$,
 $\mathfrak{b}_{3}^{1,1}=4$.
\end{theorem}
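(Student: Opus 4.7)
The plan is to mirror the derivation of Ledoux's recurrence \eqref{eq:Ledoux} in the bipartite setting. We start from the explicit non-shifted ODE for $\eta(t,z,u,v)$ provided by Theorem \ref{theo:ODEBipMaps} (equation \eqref{eq:ODEBipMaps}), and extract the coefficient of $z^1$. This should produce a \emph{linear} ODE for the one-face generating function $\mathfrak{u}_B(t,u,v) := [z]\,\eta(t,z,u,v)$, from which the claimed recurrence follows by extracting the coefficient of $[t^n u^i v^j]$.

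The reason that $[z^1]$-extraction yields a linear equation, despite \eqref{eq:ODEBipMaps} being quadratic in the $\Gth_\lambda$, is the following. For a product $\Gth_{\lambda_1}\Gth_{\lambda_2}$, one has $[z^1](\Gth_{\lambda_1}\Gth_{\lambda_2}) = [z^0]\Gth_{\lambda_1}\cdot [z^1]\Gth_{\lambda_2}+[z^1]\Gth_{\lambda_1}\cdot [z^0]\Gth_{\lambda_2}$. Since $\eta(t,0,u,v)=0$ (every bipartite map has at least one face), and since for $\ell(\lambda)\geq 1$ the quantity $[z^0]\Gth_\lambda$ is the generating function of bipartite maps having exactly $\ell(\lambda)$ faces whose degrees are the parts of $\lambda$, these constant-in-$z$ contributions are explicit polynomials in $t,u,v$ coming from a finite family of small maps. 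Moreover, in a bipartite map every face has even degree, so $[z^0]\Gth_\lambda$ vanishes whenever $\lambda$ contains any odd part, which eliminates most of the would-be contributions in $\operatorname{KP1}$, $\operatorname{KP2}$, $\operatorname{KP3}$. Applying Proposition \ref{prop:relGlambdaBip} to express each surviving $[z^1]\Gth_\lambda$ as a linear combination of $\partial_t^i\mathfrak{u}_B$, one obtains a linear ODE for $\mathfrak{u}_B$ with coefficients in $\mathbb{Q}[t,u,v]$.

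Extracting $[t^n u^i v^j]$ from this linear ODE then yields the claimed recursion: the polynomial coefficients in $n$ of degree up to $3$ appearing in the statement, such as $5n^3-16n^2+13n-1$ or $(2n-7)^2(n-2)^2$, arise from the falling-factorial factors $\tfrac{n!}{(n-k)!}$ produced by $t$-derivatives acting on $\tfrac{t^n}{2n}$, combined with the explicit prefactors from Proposition \ref{prop:relGlambdaBip}. The initial values $\mathfrak{b}_{n}^{i,j}$ for small $n$ can be checked by a direct enumeration of small bipartite one-face maps.

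The main obstacle is computational rather than conceptual: verifying that after all the $[z^1]$-extractions and Virasoro reductions, the terms coming from $\operatorname{KP1}$, $\operatorname{KP2}$, $\operatorname{KP3}$ recombine into precisely the recursion displayed in the theorem is a lengthy algebraic check. As for Ledoux's formula, this is best carried out via the symbolic procedures of the accompanying worksheet~\cite{us:Maple}, which implements each stage and confirms the result.
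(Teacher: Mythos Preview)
Your proposal is correct and follows essentially the same route as the paper: extract the coefficient of $z^1$ in the non-shifted ODE \eqref{eq:ODEBipMaps} to obtain a linear ODE for $\mathfrak{b}(t,u,v):=[z]\eta(t,z,u,v)$, then read off the recursion by extracting $[t^n u^i v^j]$. The paper's proof is terser---it simply performs the extraction, divides by the explicit factor $\tfrac{45}{14t^7uv(u-1)(v-1)}$, displays the resulting linear ODE, and extracts coefficients---whereas you supply the conceptual reason (vanishing of $[z^0]\eta$ and of $[z^0]\Gth_\lambda$ for $\lambda$ with an odd part) for why the extraction linearises the equation; this is a useful addition the paper leaves implicit.
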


\begin{proof}[Proof of \cref{thm:BipartiteLedoux}]
To obtain a linear ODE for the generating function
  \[ \mathfrak{b} \equiv \mathfrak{b}(t,u,v) := [z]\eta(t,z,u,v) = \sum_{n,i,j}\frac{\mathfrak{b}_{n}^{i,j}}{2n}t^nu^iv^j\]
    of rooted non-oriented bipartite maps with only one face, we extract the coefficient of $z^1$ in \eqref{eq:ODEBipMaps}, and
    multiply by $\frac{45}{14t^7uv(u-1)(v-1)}$. This gives
\begin{multline}
(-uv + 2\frac{\partial}{\partial t}\mathfrak{b}) + t(2u^2v + 2uv^2 - 5uv +
7\frac{\partial}{\partial t}\mathfrak{b}(1-u-v) +
\frac{\partial^2}{\partial t^2}\mathfrak{b}) + t^2(-uv((u-v)^2-1) \\
+ (3(3u^2+3v^2+2uv) - 12(u  + v)-29)\frac{\partial}{\partial t}\mathfrak{b} +
4 (1-u - v) \frac{\partial^2}{\partial t^2}\mathfrak{b}) \\
+ t^3(5(-u^3 + u^2v
+ uv^2 - v^3 + (u-v)^2+ 7(u + v - 1)) \frac{\partial}{\partial t}\mathfrak{b} 
+
(2(3u^2+3v^2+2uv) \\
- 8(u  + v)  - 86) \frac{\partial^2}{\partial t^2}\mathfrak{b}) 
+ t^4(((u-v)^4 -
18(u-v)^2 + 81) \frac{\partial}{\partial t}\mathfrak{b} 
- 4(u^3 - u^2v -
uv^2
+v^3\\
- (u-v)^2 -37(u +v  - 1)) \frac{\partial^2}{\partial t^2}\mathfrak{b} -
44 \frac{\partial^3}{\partial t^3}\mathfrak{b}) 
+ t^5(((u - v)^4- 64(u-v)^2+ 719) \frac{\partial^2}{\partial
  t^2}\mathfrak{b} \\
+ 82(u + v  -1) \frac{\partial^3}{\partial t^3}\mathfrak{b}- 5 \frac{\partial^4}{\partial t^4}\mathfrak{b}) 
+ t^6((-38(u-v)^2 + 1078) \frac{\partial^3}{\partial t^3}\mathfrak{b}
\\
+
10(u +v  - 1) \frac{\partial^4}{\partial t^4}\mathfrak{b}) 
+ t^7(-5(u-v)^2+
493) \frac{\partial^4}{\partial t^4}\mathfrak{b}
+ 80t^8 \frac{\partial^5}{\partial t^5}\mathfrak{b} + 4t^9 \frac{\partial^6}{\partial t^6}\mathfrak{b} = 0.
\end{multline}
Extracting the coefficient of $[t^nu^iv^j]$ produces the desired recursion.
  \end{proof}

\subsection{Non-oriented triangulations} \label{sec:Triangulations}

The generating series of triangulations can be obtained from $F(t,\pp,u)$ (given by \eqref{eq:FunftionF}) by applying another specialization instead of $\theta$. Indeed, define the specialization operator $\theta_3$ by $\theta_3(p_i) := z\delta_{3,i}$. This operator enforces that all faces must be of degree 3, and 
\begin{equation*}
  \Xi(t,z,u) := \theta_3 F(t,\pp,u) =
  \sum_{M: \forall f \in F(M)\atop \deg(f)=3}\frac{t^{2e(M)}}{4e(M)}z^{f(M)}u^{v(M)} = \sum_{n \geq 1}
  \sum_{g \geq 0}\frac{\mathfrak{t}_n^g}{12n}t^{6n}z^{2n}u^{n+2-2g},
\end{equation*}
is the generating function of rooted, non-oriented triangulations
$M$. Of course, triangulations satisfy $2e(M)=3f(M)$. By using Euler's relation, one can expand $\Xi(t,z,u)$ by the genus and the
number of edges, and here $\mathfrak{t}_n^g$ denotes the number of
rooted, non-oriented triangulations with $3n$ edges (or equivalently
$2n$ faces) and genus $g$.

Similarly as in the previous sections, we want to express $\Ftr_\lambda$ as a polynomial in $\Xi(t,z,u)$ and its derivatives with respect to $t$, where 
$$
\Ftr_\lambda \equiv \Ftr_\lambda(t,z,u) := \theta_3 (F_\lambda) = \theta_3
\left(\prod_{j=1}^k \frac{\partial}{\partial p_{i_j}}  F(t,\pp,u)\right).
$$
for a sequence of non-negative integers $\lambda=(i_1,\dots,i_k)$.

\begin{proposition}\label{prop:relGlambda3}
For $i\geq -1$ and $n_1,n_2\geq 0$, one has the recurrence relations 
\begin{multline} \label{eq:relGlambda3}
t^2z(i+3)\Ftr_{i+3,2^{n_2},1^{n_1}} =
-2t^2\sum_{\substack{a+b=i\\a,b\geq 1}} \sum_{\substack{l_i=0\\
    i=1,2}}^{n_i} ab\binom{n_1}{l_1} \binom{n_2}{l_2}\Ftr_{a,2^{l_2},1^{l_1}} \Ftr_{b,
  2^{n_2-l_2}, 1^{n_1-l_1}}\\
-2t^2\sum_{\substack{a+b=i\\a,b\geq 1}}
ab\Ftr_{a,b,2^{n_2},1^{n_1}}-t^2\sum_{j=1}^2n_j(i+j)\Ftr_{i+j,2^{n_2-\delta_{2,j}},
  1^{n_1-\delta_{1,j}}}+(i+2)\Ftr_{i+2,2^{n_2},1^{n_1}}\\
- t^2\delta_{i \neq -1}(2u+i+1)i \Ftr_{i,2^{n_2},1^{n_1}}
- t^2\Bigl(\delta_{i,-1}\delta_{n_1,1} + (u+1) \delta_{i,0}\delta_{n_1,0}\Bigr)\frac{u}{2}\delta_{n_2,0},
\end{multline}
\begin{equation}
	\label{eq:relGlambda3-3}
\Ftr_{3,i_1,\dots,i_k} = \frac{t}{3z} \frac{\partial}{\partial
  t}\Ftr_{i_1,\dots,i_k}-\frac{i_1+\cdots+i_k}{3z}\Ftr_{i_1,\dots,i_k}
\end{equation}
\begin{equation}
    \label{eq:relGlambda3-1}
\Ftr_{1^l} = t^5z \frac{\partial}{\partial
  t}\Ftr_{1^{l-1}}+\frac{t^4z(u^2+u)}{2}\delta_{l,1} + \frac{t^2 u}{2}
\delta_{l,2},
\end{equation}
	with the convention that $\Ftr_{3^0,2^0,1^0}=\Xi(t,z,u)$.

	In particular, for any sequence of integers of the form $\lambda=[\ell,3^{n_3},2^{n_2},1^{n_1}]$ and of size $|\lambda|=\ell+n_1+2n_2+3n_3$, there exists a polynomial
$R_{\lambda}$ in $\{\frac{\partial^i}{\partial t^i}\Xi(t,z,u): 1
\leq i \leq |\lambda|\}$ with coefficients in $\mathbb{Q}[t,t^{-1},z,z^{-1},u]$
which is linear for $\ell\leq 4$, and
quadratic for $5 \leq \ell\leq 8$ and satisfies
\begin{align} \label{eq:F3lambdadiff}
	\Ftr_\lambda = R_\lambda\left(\frac{\partial}{\partial t}\Xi(t,z,u), \dotsc, \frac{\partial^{|\lambda|}}{\partial t^{|\lambda|}}\Xi(t,z,u)\right).
\end{align}
\end{proposition}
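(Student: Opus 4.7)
My plan mirrors the proof of Proposition~\ref{prop:relGlambda}, using the Virasoro constraints of Proposition~\ref{prop:VirasoroMaps} in their expanded form~\eqref{Virasoro} together with the homogeneity relation $\sum_{k\ge 1} p_k p_k^* F = t\,\partial_t F$, but with the specialization $\theta_3$ in place of $\theta$. The crucial new feature is the behaviour of the linear term $\sum_{a\ge 1}p_a(i+a)F_{i+a}$ under $\theta_3\,\partial_{p_1}^{n_1}\partial_{p_2}^{n_2}$: since $\theta_3(p_a)=z\,\delta_{a,3}$ kills every $p_a$ except $p_3$, and a derivative $\partial_{p_j}$ produces a non-vanishing contribution from the linear factor $p_j$ only when applied exactly once, the only surviving contributions are $z(i+3)\Ftr_{i+3,2^{n_2},1^{n_1}}$ coming from $a=3$, and $n_j(i+j)\Ftr_{i+j,2^{n_2-\delta_{j,2}},1^{n_1-\delta_{j,1}}}$ for $j=1,2$.

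To derive~\eqref{eq:relGlambda3} I would apply $\partial_{p_1}^{n_1}\partial_{p_2}^{n_2}$ to~\eqref{Virasoro} followed by $\theta_3$, treating the quadratic part by the binomial Leibniz formula exactly as in the proof of Proposition~\ref{prop:relGlambda}; the source terms $\tfrac{u(u+1)}{2}\delta_{i,0}$ and $\tfrac{p_1 u}{2}\delta_{i,-1}$ survive under $\theta_3\,\partial_{p_1}^{n_1}\partial_{p_2}^{n_2}$ precisely when $(n_1,n_2)=(0,0)$ and $(n_1,n_2)=(1,0)$, giving the two boundary $\delta$-terms on the last line of~\eqref{eq:relGlambda3}. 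Equation~\eqref{eq:relGlambda3-3} is obtained in the same fashion by applying $\prod_j \partial_{p_{i_j}}$ to the homogeneity identity and then $\theta_3$: only the $k=3$ summand of $\sum_k k\,p_k F_k$ survives the specialization, which after isolating $\Ftr_{3,i_1,\dots,i_k}$ is exactly the stated claim.

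Equation~\eqref{eq:relGlambda3-1} then arises as a combination of the previous two. Applying $\partial_{p_1}^{l-1}$ followed by $\theta_3$ to the $i=-1$ case of~\eqref{Virasoro} gives $\Ftr_{1^l}=2zt^2\Ftr_{2,1^{l-1}}+\tfrac{t^2 u}{2}\delta_{l,2}$. I would then express $\Ftr_{2,1^{l-1}}$ via~\eqref{eq:relGlambda3} at $i=0$, $n_1=l-1$, $n_2=0$, and eliminate the resulting $\Ftr_{3,1^{l-1}}$ through~\eqref{eq:relGlambda3-3}; the two $\Ftr_{1^{l-1}}$-contributions cancel against each other and only $\tfrac{t^3}{2}\,\partial_t\Ftr_{1^{l-1}}$ remains, yielding~\eqref{eq:relGlambda3-1} after accounting for the two boundary $\delta$-corrections.

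The polynomial representation~\eqref{eq:F3lambdadiff} follows by strong induction on the largest part $\ell$ of $\lambda$: parts equal to $3$ are absorbed via~\eqref{eq:relGlambda3-3}, parts equal to $1$ via~\eqref{eq:relGlambda3-1}, and a part $\ell\ge 4$ is reduced through~\eqref{eq:relGlambda3} applied at $i=\ell-3$, which only produces $\Ftr$'s of strictly smaller largest part together with bilinear terms $\Ftr_{a,\dots}\Ftr_{b,\dots}$ and $\Ftr_{a,b,\dots}$ with $a+b=\ell-3$. The degree claim is then direct bookkeeping: for $\ell\le 4$ (so $i\le 1$) no bilinear term appears at all and the expression stays linear; for $5\le\ell\le 8$ the bilinear factors have largest part at most $\ell-4\le 4$, hence are linear by induction, and the product remains quadratic. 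The main (and essentially only) obstacle is the careful accounting of $\delta$-corrections and of the coefficient ring $\mathbb{Q}[t^{\pm 1},z^{\pm 1},u]$ (the negative powers of $z$ entering through~\eqref{eq:relGlambda3-3}); this is routine and can be automated as in the accompanying worksheet~\cite{us:Maple}.
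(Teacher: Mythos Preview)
Your proof is correct and follows essentially the same route as the paper's: derive~\eqref{eq:relGlambda3} by applying $\partial_{p_1}^{n_1}\partial_{p_2}^{n_2}$ then $\theta_3$ to~\eqref{Virasoro}, derive~\eqref{eq:relGlambda3-3} from the homogeneity relation, and combine the $i=-1$ and $i=0$ specializations with~\eqref{eq:relGlambda3-3} to obtain~\eqref{eq:relGlambda3-1}. The only cosmetic difference is that the paper writes the intermediate identity (their equation~\eqref{eq:pomoc}) explicitly before rearranging, and phrases the induction as being on $|\lambda|$ rather than on the largest part; both orderings are valid since each application of~\eqref{eq:relGlambda3},~\eqref{eq:relGlambda3-3} or~\eqref{eq:relGlambda3-1} strictly decreases the size, and your largest-part ordering also terminates for the same reason.
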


\begin{proof}
Similarly to the proof of \cref{prop:relGlambda}, we act with $\frac{\partial^{n_1+n_2}}{\partial p_1^{n_1}\partial
  p_2^{n_2}}$ on both sides of \eqref{Virasoro} and apply $\theta_3$ to obtain
\begin{multline}
  \label{eq:pomoc}
(i+2) \Ftr_{i+2,2^{n_2},1^{n_1}} = t^2z (i+3)\Ftr_{i+3,2^{n_2},1^{n_1}} +
  \delta_{i \neq -1}t^2i(i+1+2u)\Ftr_{i,2^{n_2},1^{n_1}} \\
+  2t^2 \sum_{a+b=i} ab\Big(\Ftr_{a,b,2^{n_2},1^{n_1}}+\sum_{l_1=0}^{n_1}\sum_{l_2=0}^{n_2} \binom{n_1}{l_1}\binom{n_2}{l_2} \Ftr_{a, 2^{l_2},1^{l_1}} \Ftr_{b,2^{n_2-l_2},1^{n_1-l_1}}\Big)\\
+ t^2 n_1(i+1) \Ftr_{i+1,2^{n_2},1^{n_1-1}}+ t^2 {n_2}(i+2)\Ftr_{i+2,2^{{n_2}-1},1^{n_1}} 
+ t^2\Bigl(\delta_{i,-1}\delta_{n_1,1} + (u+1) \delta_{i,0}\delta_{n_1,0}\Bigr)\frac{u}{2}\delta_{n_2,0}.
\end{multline}
Equation \eqref{eq:relGlambda3} is obtained from
\eqref{eq:pomoc} by moving $t^2z (i+3)\Ftr_{i+3,2^{n_2},1^{n_1}}$ to the left of the equality and $(i+2) \Ftr_{i+2,2^{n_2},1^{n_1}}$ to the right.

To get Equation \eqref{eq:relGlambda3-3}, one acts on both sides of the homogeneity relation $\sum_{i\geq 1} p_ip_i^* F = t\frac{\partial F}{\partial t}$ with $\frac{\partial^k}{\partial p_{i_1} \dotsm \partial p_{i_k}}$ and applies $\theta_3$.

Furthermore, specializing $i=-1,n_2=0,n_1=l-1$ and then $i=0,n_2=0,n_1=l-1$ in \eqref{eq:pomoc}, we get
\[ \Ftr_{1^l} = t^2z2\Ftr_{2,1^{l-1}}+t^2\frac{u}{2}\delta_{l,2} =
  t^2z\Big(t^2z 3\Ftr_{3,1^{l-1}}+t^2(l-1)
  \Ftr_{1^{l-1}}+t^2\frac{u(u+1)}{2}\delta_{l=1}\Big)
  +t^2\frac{u}{2}\delta_{l,2}.\]
To get \eqref{eq:relGlambda3-1}, one substitutes \eqref{eq:relGlambda3-3} into the above for $i_1, \dotsc, i_k=1$ and $k=l-1$.

Since the sizes of the vectors indexing $\Ftr$s appearing in the RHS of
\eqref{eq:relGlambda3} are strictly smaller than $|\lambda|$, one computes
	$\Fth_\lambda$ recursively for vectors of the form
        $\lambda=[\ell,3^{n_3},2^{n_2},1^{n_1}]$, where $\ell \leq 10$
        (by eliminating all the parts of length $3$ thanks to
        \eqref{eq:relGlambda3-3}, and reducing the sizes of the indexing vectors thanks to \eqref{eq:relGlambda3} and finally using
        the recurrence \eqref{eq:relGlambda3-1} for the parts of the form $\Fth_{1^l}$). The last statement follows by
        induction on $\ell$.
\end{proof}
\begin{remark}
We want to highlight the fact that the above computations are possible
because we are working with the specific model of
triangulations. Replacing the specialization $\theta_3$ by $\theta_l:p_i\mapsto \delta_{i,l}$ with $l
\geq 4$ makes the above technique fail to even compute $F^{\theta_l}_1$.
\end{remark}

\begin{theorem}[Counting triangulations by faces, and genus]
  \label{thm:TriangulationsRecurrence}
	The number $\tttt_n^g$ of rooted non-oriented triangulations of genus $g$
        with $2n$ faces (or, equivalently $3n$ edges) can be computed from the following recurrence formula:
	\begin{multline*} 
	\tttt_n^g
=
\tfrac{2}{2n^2+(3-2g)n+(1-g)(1-2g)} \times \Bigg(
n\bigg(6(3n-1) \tttt_{n-1}^{g}
+   12(3n-4)\big((3n-2)n\tttt_{n-2}^{g-1}\\
+2(\tttt_{n-2}^{g-1/2}+\tttt_{n-2}^{g})\big)
+6\hsum_{g_1=0..g\atop g_1+g_2=g} \sum_{n_1=0..n \atop n_1+n_2=n} 
(3n_1-1)(3n_2-1)\tttt_{n_2-1}^{g_2}\tttt_{n_1-1}^{g_1}   \bigg)
 \\
-\hsum_{g_1=0..g\atop g_1+g_2=g} \sum_{n_1=1..n\atop n_1+n_2=n}
 \left(
	 \hsum_{g_0=0..g_1\atop g_1-g_0\in \mathbb{N}} {\textstyle {n_1+2-2 g_0 \choose n_1-2 g_1}} 
 2^{2 (1+g_1-g_0)}
      \tttt_{n_1}^{g_0}
 \right)\Big(
	 -\frac{n_2+1}{8}\tttt_{n_2}^{g_2}
	 +(3n_2-1){\tttt}_{n_2-1}^{g_2} \\
+2(3n_2-4)\big((3n_2-2)n_2\tttt_{n_2-2}^{g_2-1}
        +2(\tttt_{n_2-2}^{g_2-1/2}+\tttt_{n_2-2}^{g_2})\big)
+ \frac{1}{8}\delta_{g_0\neq g}\delta_{n_1,n}(\delta_{g_1,g}-\delta_{g_1,g-1/2})\\
+\delta_{n_1,n-1}(2\delta_{g_1,g}+2\delta_{g_1,g-1/2}+\delta_{g_1,g-1})+4\delta_{n_1,n-2}(\delta_{g_1,g}+2\delta_{g_1,g-1/2}\\
+9\delta_{g_1,g-1}+8\delta_{g_1,g-3/2})
+\hsum_{g_3=0..g_2\atop g_3+g_4=g_2} \sum_{n_3=0..n_2 \atop n_3+n_4=n_2} 
(3n_3 - 1)(3n_4 - 1)\tttt_{n_3-1}^{g_3}\tttt_{n_4-1}^{g_4}	
\Big)\vspace{3cm} \Bigg)	
	\end{multline*}
	for $n>2$, with the initial conditions 
	$\tttt_0^g=0$, $\tttt_1^0=4$, $\tttt_2^0=32$,
        $\tttt_{1}^{1/2}=9$, $\tttt_2^{1/2}=118$, $\tttt_1^1=7$,
        $\tttt_2^1=202$, $\tttt_2^{3/2}=128$ and $\tttt_n^g=0$ if
        $n<2g-1$.
      \end{theorem}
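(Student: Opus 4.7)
The plan is to mimic the proof of \cref{thm:MapsRecurrence}, using the specialization $\theta_3$ instead of $\theta$ and invoking \cref{prop:relGlambda3} in place of \cref{prop:relGlambda}. The starting point is the first BKP equation~\eqref{BKP1Log} applied to the pair $(\tau(t,2\pp,u), \beta_u)$ from \cref{prop:BKPmaps}, which is a formal-$u$ BKP tau function with $S_2(u)=t^4u(u-1)$. Applying $\theta_3$ to both sides yields an equation in which the LHS is a combination of the quantities $\Ftr_{3,1}$, $\Ftr_{2^2}$, $\Ftr_{1^2}$, $\Ftr_{1^4}$, while the RHS equals $S_2(u)\, e^{\Xi(t,z,u+2)+\Xi(t,z,u-2)-2\Xi(t,z,u)}$.

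Next, differentiate both sides in $t$ and substitute the original equation back in to eliminate the exponential factor, exactly as was done to obtain \eqref{eq:SmallOdeMaps}. Using $\frac{\partial}{\partial t}S_2(u)=\frac{4}{t}S_2(u)$, this gives a clean functional equation whose only ``transcendental'' ingredient is the shifted object $\Xi(t,z,u+2)+\Xi(t,z,u-2)-2\Xi(t,z,u)$, multiplied against differential polynomials in the $\Ftr_\lambda$. I would then apply \cref{prop:relGlambda3} to each $\Ftr_\lambda$ appearing: all such $\lambda$ have size at most $4$, hence fall squarely in the linear regime of \cref{prop:relGlambda3}, and are expressed as differential polynomials in $\Xi(t,z,u)$ with coefficients in $\mathbb{Q}[t,t^{-1},z,z^{-1},u]$.

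To extract the recurrence, perform the substitution $u \to ur$ in the resulting ODE (so that $r$ tracks the Euler characteristic, hence the genus), and extract the coefficient of $[t^{6n+6}r^{n+2-2g}]$. The key combinatorial identity is the Taylor-type expansion
\[ [r^m]\bigl(f(ur+2)+f(ur-2)-2f(ur)\bigr)=\sum_{k\geq m+2,\ k-m\in 2\mathbb{N}}2^{1+k-m}\sum_{p=k}\binom{p}{m}u^{m}[u^p]f(u),\]
already used in the proof of \cref{thm:MapsRecurrence}, which is what produces the binomial $\binom{n_1+2-2g_0}{n_1-2g_1}$ and the factor $2^{2(1+g_1-g_0)}$ in the stated recurrence. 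After carrying out this extraction and grouping the terms in which the full $\tttt_n^g$ reconstitutes itself on both sides, the leading factor $2n^2+(3-2g)n+(1-g)(1-2g)$ emerges as a divisor and one can solve for $\tttt_n^g$ as in the theorem. The initial conditions can be checked either by hand, from small-genus expansions, or from OEIS.

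The main obstacle is a matter of careful bookkeeping rather than a new idea: relation~\eqref{eq:relGlambda3-3} forces explicit $z^{-1}$ factors when eliminating derivatives in the direction $p_3$, and one needs to check that these denominators cancel in the final expression (which they must, since $\tttt_n^g$ is polynomial in $z$, or rather is extracted from a specific power of $z$). A secondary subtlety is tracking the correct power of $t$ in every term: because $2e(M)=3f(M)$ for triangulations, the $t$-weight is $t^{6n}$ per object and the Virasoro relation \eqref{eq:relGlambda3} mixes $t^2$-factors with the reduction of derivatives, so the global coefficient of $[t^{6n+6}]$ must be computed while tracking which Virasoro iterations contribute which power of $t$. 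Once this is handled, the recurrence and its initial conditions follow mechanically, as implemented in the accompanying worksheet~\cite{us:Maple}.
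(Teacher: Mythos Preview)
Your proposal is correct and follows essentially the same approach as the paper: the paper's proof says verbatim that it is ``almost identical to the proof of \cref{thm:MapsRecurrence}'', the only change being the replacement of \eqref{eq:SmallOdeMaps} by its $\theta_3$-analogue with $\Ftr_\lambda$ in place of $\Fth_\lambda$, which is precisely what you describe. One tiny imprecision: when you say the relevant $\lambda$ ``have size at most $4$'' and hence fall in the linear regime of \cref{prop:relGlambda3}, note that the linearity criterion there is on the largest part $\ell$, not on $|\lambda|$; since all partitions involved have $\ell\le 3$ this changes nothing.
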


            \begin{proof}
        As in the case of bipartite maps, the proof is almost identical to the proof of
        \cref{thm:MapsRecurrence} and we leave the details for the
        interested reader. The only difference is that
        \eqref{eq:SmallOdeMaps} should be replaced by
          \begin{multline*}
    \label{eq:SmallOdeMaps}\Big(\frac{\partial}{\partial t}\big(\Xi(t,z,u+2)+\Xi(t,z,u-2)-2\Xi(t,z,u)\big)\Big)
  \Big( 4\Ftr_{2^2}-4\Ftr_{3,1} + \frac{4}{3}(6(\Ftr_{1^2})^2 +\Ftr_{1^4})\Big) \\
  =\frac{\partial}{\partial
    t}\Big(4\Ftr_{2^2}-4\Ftr_{3,1} + \frac{4}{3}(6(\Ftr_{1^2})^2 +\Ftr_{1^4})\Big) - \frac{4}{t} \Big( 4\Ftr_{2^2}-4\Ftr_{3,1} + \frac{4}{3}(6(\Ftr_{1^2})^2 +\Ftr_{1^4})\Big).
\end{multline*}
        \end{proof}

\begin{theorem}
  \label{theo:ODETriangulations}
There exists a polynomial
$R \in \mathbb{Q}[t,u,z][x_1,\dots,x_6]$ of degree $5$ such that
\[ R\left(\frac{\partial}{\partial t}\Xi(t,z,u),\dots,
    \frac{\partial^6}{\partial t^6}\Xi(t,z,u)\right) \equiv 0.\]
An explicit form of $R$ can be obtained by applying 
Proposition~\ref{prop:relGlambda3} to the following equation
  \begin{multline}
    \label{eq:ODETriang}
t^{10}z^2\Bigl(\frac{\partial}{\partial t}\operatorname{KP1}\Bigr)^2-\Bigl(\operatorname{KP2}\Bigr)^2+\operatorname{KP1}\Bigg(\operatorname{KP3}-\frac{1}{2t^2z}\operatorname{KP2}\\
-\Bigl(t^{10}z^2 \frac{\partial^2}{\partial t^2}+5t^9z^2
\frac{\partial}{\partial t} +2t^{10}z^2 \frac{\partial^2}{\partial
  t^2}\Xi + 10t^9z^2 \frac{\partial}{\partial
  t}\Xi+4t^8z^2(u^2+u)+t^2u\Bigr) \operatorname{KP1}\Bigg) \equiv 0
  \end{multline}
where
\begin{align*}
 \operatorname{KP1} &= -4\Ftr_{3,1} + 4\Ftr_{2^2} + \frac{4}{3}(6{\Ftr_{1^2}}^2 +\Ftr_{1^4}),\\
\operatorname{KP2} &= -4\Ftr_{4,1} + 4\Ftr_{3,2} + \frac{8}{3}(6\Ftr_{2,1}\Ftr_{1^2}
+\Ftr_{2,1^3}),\\
\operatorname{KP3}&=-6\Ftr_{5,1}+ 4\Ftr_{4,2}+2\Ftr_{3,3}+
                    \frac{8}{3}(6\Ftr_{3,1}\Ftr_{1^2}
                    +\Ftr_{3,1^3})+4(4{\Ftr_{2,1}}^2+2\Ftr_{2^2}\Ftr_{1^2} +
                    \Ftr_{2^2,1^2})\\
  &+ \frac{4}{45}(60{\Ftr_{1^2}}^3 + 30\Ftr_{1^4}\Ftr_{1^2} + \Ftr_{1^6})
\end{align*}
where $\Xi \equiv \Xi(t,z,u)$.
An explicit form of this ODE can be found in the accompanying Maple worksheet~\cite{us:Maple}.
\end{theorem}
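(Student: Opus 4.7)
The plan is to mirror the proof of Theorem~\ref{theo:ODEMaps}, substituting the specialization $\theta_3$ in place of $\theta$ and using Proposition~\ref{prop:relGlambda3} in place of Proposition~\ref{prop:relGlambda}. Since the underlying formal-$N$ BKP tau function from Proposition~\ref{prop:BKPmaps} is unchanged (only the specialization differs), the normalization constant $S_2(u) = t^4 u(u-1)$ is unchanged, and applying $\theta_3$ to the three BKP equations \eqref{BKP1Log}, \eqref{BKP2Log}, \eqref{BKP3Log} yields
\begin{equation*}
E = \operatorname{KP1}, \qquad E\cdot\Delta\Ftr_1 = \operatorname{KP2}, \qquad E\cdot\bigl(\nabla\Ftr_{1^2} + \Delta\Ftr_2 + (\Delta\Ftr_1)^2\bigr) = \operatorname{KP3},
\end{equation*}
where $E = S_2(u) e^{\nabla\Xi - 2\Xi}$, $\Delta f(u) = f(u+2) - f(u-2)$, and $\nabla f(u) = f(u+2) + f(u-2)$.

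The first task is to express $\Ftr_1, \Ftr_2, \Ftr_{1^2}$ as explicit differential polynomials in $\Xi$ via Proposition~\ref{prop:relGlambda3}. The recurrence \eqref{eq:relGlambda3-1} gives $\Ftr_1 = t^5 z\, \partial_t \Xi + \tfrac{1}{2}t^4 z(u^2+u)$ and $\Ftr_{1^2} = t^{10} z^2\, \partial_t^2 \Xi + 5t^9 z^2\, \partial_t \Xi + 2t^8 z^2 (u^2+u) + \tfrac{1}{2}t^2 u$, while \eqref{eq:relGlambda3} combined with \eqref{eq:relGlambda3-3} gives $\Ftr_2 = \tfrac{1}{2} t^3\, \partial_t \Xi + \tfrac{1}{4}t^2 u(u+1)$. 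From these, $\Delta\Ftr_1$, $\nabla\Ftr_{1^2}$, $\Delta\Ftr_2$ and $(\Delta\Ftr_1)^2$ become affine or quadratic combinations of the shifted derivatives $\Delta\partial_t\Xi$, $\nabla\partial_t\Xi$ and $\nabla\partial_t^2\Xi$ with $u$- and $t$-dependent coefficients. A pleasant simplification, already visible in the maps case, is that the constant correction terms in $\Delta\Ftr_1$ and $\Delta\Ftr_2$ cancel to give the clean identity $\Delta\Ftr_2 = \tfrac{1}{2t^2 z}\,\Delta\Ftr_1$, which will account for the term $-\tfrac{1}{2t^2 z}\operatorname{KP2}$ appearing in \eqref{eq:ODETriang}.

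To eliminate the remaining shifts $\nabla\partial_t\Xi$ and $\nabla\partial_t^2\Xi$, I would differentiate $E = \operatorname{KP1}$ once and twice in $t$, using $\partial_t S_2 = (4/t)S_2$, to obtain explicit formulas for $E\,\nabla\partial_t\Xi$ and $E\,\nabla\partial_t^2\Xi$ in terms of $\operatorname{KP1}$, $\partial_t\operatorname{KP1}$, $\partial_t^2\operatorname{KP1}$ and derivatives of $\Xi$ (the map-case computation carries over verbatim). The quadratic contribution is handled via $E(\Delta\Ftr_1)^2 = \operatorname{KP2}^2/\operatorname{KP1}$, and multiplying the third BKP equation by $\operatorname{KP1}$ clears denominators. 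After substitution, and a small algebraic check that the surplus $t^8 z^2$-constants cancel between the $\nabla\Ftr_{1^2}$ expansion and the second-derivative substitution, one arrives exactly at \eqref{eq:ODETriang}. A final pass of Proposition~\ref{prop:relGlambda3} on each $\Ftr_\lambda$ occurring in \eqref{eq:ODETriang} -- the longest partitions being $\Ftr_{5,1}$, $\Ftr_{2^2,1^2}$, $\Ftr_{1^6}$, all of size at most $6$ and hence lying in the (at worst) quadratic regime of the proposition -- yields the polynomial $R$. The degree bound of $5$ comes from the cubic $(\Ftr_{1^2})^3$ inside $\operatorname{KP3}$ multiplied by the quadratic $\operatorname{KP1}$. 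The main obstacle is not conceptual but purely computational: tracking the precise powers of $t$ and $z$ generated by the Virasoro reduction \eqref{eq:relGlambda3}--\eqref{eq:relGlambda3-1} for every $\Ftr_\lambda$ is tedious and is in practice best delegated to a computer algebra system, as in the accompanying worksheet~\cite{us:Maple}.
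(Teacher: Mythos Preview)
Your proposal is correct and matches the paper's approach exactly: the paper states that ``the proof is the same as in the previous cases, so we leave it as an exercise,'' and what you have written is precisely that exercise carried out, mirroring the proof of Theorem~\ref{theo:ODEMaps} with $\theta_3$ in place of $\theta$ and Proposition~\ref{prop:relGlambda3} in place of Proposition~\ref{prop:relGlambda}. One minor remark: your ``pleasant simplification'' $\Delta\Ftr_2 = \tfrac{1}{2t^2z}\Delta\Ftr_1$ is in fact immediate from the exact relation $\Ftr_1 = 2t^2z\,\Ftr_2$ (the $i=-1$ Virasoro constraint after specialization), so no cancellation of correction terms is actually needed there; and your check that the $16t^8z^2$ contributions from $\nabla\Ftr_{1^2}$ and from the $4/t^2$, $-4/t$ terms in the differentiated first BKP equation cancel is exactly the computation that yields the final constant $4t^8z^2(u^2+u)+t^2u$ in~\eqref{eq:ODETriang}.
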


The proof is the same as in the previous cases, so we leave it
as an exercise. As a standard consequence we have.

\begin{theorem}[Counting triangulations by edges and genus -- unshifted recurrence]\label{thm:rec-Triangulations-nonshifted}
The number $\tttt_n^g$ of rooted triangulations of genus $g$ with $3n$ edges, orientable or not, 
is solution of an explicit recurrence relation of the form
	\begin{multline}\label{eq:rec-tri-nonshifted}
          \tttt_n^g = \tttt_n^{g-1/2}-\sum_{n_1=1..n-1\atop
                   n_1+n_2=n}\sum_{g_1=0..g\atop g_1+g_2=g}\frac{(n_1+1)(n_2+1)}{14(n+1)}\tttt_{n_1}^{g_1}\tttt_{n_2}^{g_2}\\
          + \sum_{a=1}^{K_1}\hsum_{b=0}^{K_2} \sum_{k=1}^{K_3}
            \sum_{n_1,\dots,n_k \geq 1 \atop n_1+\cdots + n_k =n-a}
            \hsum_{g_1,\dots,g_k \geq 0 \atop  g_1+\dots + g_k =g-b}
	R_{a,b,k}(n_1,\dots,n_k) \mathfrak{t}_{n_1}^{g_1} \mathfrak{t}_{n_2}^{g_2}\dots \mathfrak{t}_{n_k}^{g_k},
	\end{multline}
	where the $R_{a,b,k}$ are rational functions and $K_1,K_2,K_3 <\infty$.
      \end{theorem}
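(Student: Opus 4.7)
The plan is to follow the same strategy used for \cref{thm:rec-maps-nonshifted} and \cref{thm:rec-Bipmaps-nonshifted}, but starting from the ODE \eqref{eq:ODETriang} provided by \cref{theo:ODETriangulations}. First I would apply \cref{prop:relGlambda3} to eliminate every $\Ftr_\lambda$ appearing in $\operatorname{KP1}$, $\operatorname{KP2}$, $\operatorname{KP3}$ in favour of a differential polynomial (of degree at most $5$) in $\frac{\partial}{\partial t}\Xi,\dots,\frac{\partial^6}{\partial t^6}\Xi$ with coefficients in $\mathbb{Q}[t,t^{-1},z,z^{-1},u]$. This yields an explicit (though large) polynomial relation $R$ in these derivatives of $\Xi$.

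Next, since for triangulations the genus $g$ is already encoded separately from the number of faces/edges in the variable $u$ via the exponent $u^{n+2-2g}$ of $\Xi(t,z,u) = \sum \frac{\tttt_n^g}{12n}\, t^{6n} z^{2n} u^{n+2-2g}$, one may directly extract coefficients without introducing an auxiliary genus-tracking variable. Concretely, I would specialise $z=u=1$ in $R$ and then extract the coefficient of $t^{6n}$ from the resulting relation; alternatively, one may keep the substitution $u\to ur$ as in the proof of \cref{thm:rec-maps-nonshifted} and extract $[t^{6n} r^{n+2-2g}]$ for bookkeeping. In either case, since $R$ involves finitely many derivatives of $\Xi$, only finitely many \emph{shifts} in the indices $(n_j, g_j)$ appear, which gives the claimed bounds $K_1, K_2, K_3 < \infty$. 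The $t^{-1}$ and $z^{-1}$ factors allowed by \cref{prop:relGlambda3} become harmless after clearing denominators, since $R$ as a whole is a genuine polynomial identity.

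The main obstacle, as in the previous cases, will be the explicit identification of the two leading terms in \eqref{eq:rec-tri-nonshifted}: namely the linear term $\tttt_n^{g-1/2}$ and the quadratic term with coefficient $-\frac{(n_1+1)(n_2+1)}{14(n+1)}$. These come from isolating the highest-$t$-degree contributions to the coefficient of $[t^{6n}]$ in $R$ after the elimination. The $-1/14$ prefactor is specific to triangulations (compare $-1/42$ for maps and $1/(2(n+1))$ for bipartite maps) and reflects a concrete cancellation between the quadratic parts of $\operatorname{KP1}^2$, $\operatorname{KP2}^2$, and $\operatorname{KP1}\cdot\operatorname{KP3}$ after the reductions dictated by \eqref{eq:relGlambda3}, \eqref{eq:relGlambda3-3} and \eqref{eq:relGlambda3-1}. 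Because the full expansion is too large to write by hand, I would verify the leading-coefficient extraction through the accompanying \texttt{Maple} worksheet~\cite{us:Maple}, exactly as is done for maps in the proof of \cref{thm:rec-maps-nonshifted}. Once the top coefficients are confirmed, rearranging the relation to isolate $\tttt_n^g$ on the left-hand side (using that $2n^2 + (3-2g)n + (1-g)(1-2g) > 0$ for the relevant $n$) produces the desired form \eqref{eq:rec-tri-nonshifted}.
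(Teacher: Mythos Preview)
Your approach is essentially the same as the paper's (which merely says ``the proof is the same as in the previous cases'' and leaves it as an exercise), and the overall outline is correct. Two points of caution, however.

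First, your option of ``specialising $z=u=1$ in $R$ and then extracting the coefficient of $t^{6n}$'' does not work: once $u=1$, the genus information is lost, since $\Xi(t,1,1)=\sum_{n,g}\frac{\tttt_n^g}{12n}t^{6n}$ sums over all genera at fixed $n$. Your alternative ($u\to ur$ and extract $[t^{6n}r^{n+2-2g}]$, then set $u=z=1$) is the right procedure and is exactly what the paper does in the proof of \cref{thm:rec-maps-nonshifted}. Note that only $u$ needs the substitution $u\to ur$ here, since for triangulations $z$ is redundant with $t$ (the exponent of $z$ is always $2n$).

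Second, the factor $2n^2+(3-2g)n+(1-g)(1-2g)$ that you invoke to isolate $\tttt_n^g$ is the leading coefficient of the \emph{shifted} recurrence (\cref{thm:TriangulationsRecurrence}), not of the unshifted ODE \eqref{eq:ODETriang}. The unshifted recurrence has a different leading coefficient, which is what produces the $14(n+1)$ denominator visible in \eqref{eq:rec-tri-nonshifted}; you should appeal to the top-degree analysis of \eqref{eq:ODETriang} (as done in \cite{us:Maple}) rather than to the shifted recurrence.
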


In analogy to what we did for maps and bipartite maps, it would be natural to study now the case of triangulations with only one vertex (or by duality, cubic one-face maps). However, there exist very explicit and simple formulas in this case, obtained from bijective methods~\cite{BernardiChapuy2011} so we prefer not to go into such calculations here.

\section{Another method in the case of maps}
\label{sec:alaCC15}

In this section, we quickly adress the case of maps treated in Section~\ref{sec:maps} with another method, which actually leads to different recurrence relations. The situation is similar to the orientable case, where the approaches used in~\cite{CarrellChapuy2015} and~\cite{KazarianZograf2015} differ. In Section~\ref{sec:maps} (non-oriented analogue of~\cite{KazarianZograf2015}) we started from the fact that the generating function $F$ of maps is a BKP tau-function, and applied the substitution operator $\theta:p_i\mapsto z$. In this section, we will instead start from the fact that the generating function $G$ of \emph{bipartite} maps is a BKP tau function, and apply the different substitution operator  $\theta_2: p_i \rightarrow \delta_{i,2}$.
We will only treat the equations with shifts, our main motivation being that they are relatively nice looking -- for example we will prove here Theorem~\ref{thm:recurrence2}.

Our starting point is the well-known fact that, from a famous bijection due to Tutte and valid on all surfaces,  the number of rooted maps with $n$ edges on a surface is equal to the number of rooted bipartite quadrangulations on the same surface, with vertices and faces of the map corresponding respectively to black and white vertices of the quadrangulation (see e.g.~\cite{CarrellChapuy2015}). Therefore, the generating function $\Theta(t,z,u)$ of maps defined in Section~\ref{sec:maps} and $G(t,\pp,u,v)$ of bipartite maps defined in Section~\ref{sec:bipartite} satisfy the relation
$$
\theta_2 G(t,\pp,u,z)=\Theta(t,z,u)=\sum_{n \geq
                  1}\sum_{g \geq 0}\frac{H_n^{g}}{4n},
$$
where $H_n^g\in\mathbb{N}[u,z]$ is the generating polynomial of rooted maps of genus $g$ with $n$ edges, with $u$ and $z$ marking respectively vertices and faces.
Let us write $G^{u,z}\equiv G(t,\pp,u,z)$, so that the BKP equation~\eqref{BKP1Log} for the function $G$ can be rewritten

\begin{align}\label{eq:BKPbipartite}
G^{u;z}_{1^4}
+3G^{u;z}_{2^2}
-3 G^{u;z}_{3,1}
+ 6 ( G^{u;z}_{1^2})^2
= C(u,z) \exp\Big( G^{u+2;z+2}-2G^{u;z}+G^{u-2;z-2} \Big),
\end{align}
where the prefactor $C(u,z)$ (which is equal to $S_2(N)$ after the
substitution $u = N, z = N+\delta$), independent of the variables
$(p_i)_{i \geq 1}$, does not play any role in what follows, and where as before the indices indicate derivatives with respect to the $\mathbf{p}$-variables.
By hitting \eqref{eq:BKPbipartite} with the operator $\frac{\partial}{\partial p_1}$, and using~\eqref{eq:BKPbipartite} again  to eliminate the factor $C(u,z)\exp\big(\dots\big)$, we obtain an equation which involves no exponential anymore, and in which the prefactor has disappeared. Namely:
\begin{multline}\label{eq:BKPbipartiteConnected}
G^{u;z}_{1^5}
+3G^{u;z}_{2^2,1}
-3G^{u;z}_{3,1^2}
+ 12G^{u;z}_{1^2} G^{u;z}_{1^3}
=\\
\Big(
	 G^{u+2;z+2}_1
	-2 G^{u;z}_1
	+ G^{u-2;z-2}_1
\Big)
\Big(
G^{u;z}_{1^4}
+3G^{u;z}_{2^2}
-3 G^{u;z}_{3,1}
+ 6 ( G^{u;z}_{1^2} )^2
\Big).
\end{multline}

To extract coefficients in this equation, we will use the following lemma. Here we use an additional variable $r$ which will be convenient to track the genus parameter. In this section, it is convenient to use the convention $H_0^0=uz$.
\begin{lemma}\label{lemma:CC}
	We have, for $(n,g)\in \mathbb{N}\times (\frac{1}{2}\mathbb{N})$, $n\geq 1$, 
\begin{align*}
	[t^{2n+1}r^{n+2-2g}]\theta_2&&\partial_{1}G^{ru;rz} 
	&= \tfrac{1}{2}H_{n}^{g},\\
	[t^{2n}r^{n+2-2g}]\theta_2&&\partial_{2^2}G^{ru;rz} 
	&= \tfrac{n-1}{4} H_n^g \\
	[t^{2n}r^{n+1-2g}]\theta_2&&\partial_{1^2}G^{ru;rz} 
	&= \tfrac{2n-1}{2}H_{n-1}^g,\\
	[t^{2n}r^{n+2-2g}]\theta_2&&\partial_{1^4}G^{ru;rz} 
	&= \tfrac{(2n-1)(2n-2)(2n-3)}{2}H_{n-2}^{g-1},\\
	[t^{2n}r^{n+2-2g}]\theta_2&&\partial_{1,3}G^{ru;rz} 
	&= \tfrac{(2n-1)}{6} \left(H_{n}^{g}-(u+z)H_{n-1}^g - H_{n-1}^{g-1/2}\right),\\
	[t^{2n+1}r^{n+2-2g}]\theta_2&&\partial_{2^2,1}G^{ru;rz} 
	&= \tfrac{n(n-1)}{2}H_{n}^{g},\\
	[t^{2n+1}r^{n+1-2g}]\theta_2&&\partial_{1^3}G^{ru;rz} 
	&= \tfrac{2n(2n-1)}{2}H_{n-1}^{g},\\
	[t^{2n+1}r^{n+2-2g}]\theta_2&&\partial_{1^5}G^{ru;rz} 
	&= \tfrac{2n(2n-1)(2n-2)(2n-3)}{2}H_{n-2}^{g-1},\\
	[t^{2n+1}r^{n+2-2g}]\theta_2&&\partial_{3,1^2}G^{ru;rz} 
	&= \tfrac{n(2n-1)}{3} \left(H_{n}^{g}-(u+z)H_{n-1}^g - H_{n-1}^{g-1/2}\right).
\end{align*}
For $n=0$, the first equality remains valid, while all other quantities in left-hand sides vanish.
\end{lemma}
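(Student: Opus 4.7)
The plan is to derive each identity from the Virasoro constraints of Proposition~\ref{prop:VirasoroBipMaps} together with the basic relation $\theta_2 G^{u;z} = \Theta(t,z,u)$.

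I would begin by writing out $L_0 G = 0$ explicitly. Using $p_0^* = 0$ and the Euler identity $\sum_{a\geq 1} a p_a \partial_{p_a} G = t\partial_t G$, the constraint simplifies to the closed formula
\begin{equation*}
\partial_{p_1} G = t^2 \partial_t G + \tfrac{uvt}{2}.
\end{equation*}
Applying $\theta_2$ (which commutes with $t$-differentiation and sends $G^{u;z}$ to $\Theta$) gives $\theta_2 \partial_{p_1} G^{u;z} = t^2 \partial_t \Theta + \tfrac{uzt}{2}$, from which the first identity of the lemma follows by substituting $u\mapsto ru$, $z\mapsto rz$ and extracting $[t^{2n+1}r^{n+2-2g}]$.

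For the entries involving only $\partial_{p_1}$ and $\partial_{p_2}$, I would iterate this Virasoro identity. The operator $\theta_2 \partial_{p_2}$ applied to $G^{u;z}$ is, by direct combinatorics, equal to $\tfrac{t}{2}\partial_t \Theta$, since a quadrangulation has as many faces as half its edges. Composing with powers of $\partial_{p_1}$ via the Virasoro formula yields explicit expressions like $\partial_{p_1}^2 G = 2t^3\partial_t G + t^4\partial_t^2 G + \tfrac{uvt^2}{2}$ and analogous ones for $\partial_{p_1}^3, \partial_{p_1}^4, \partial_{p_1}^5$ and for $\partial_{p_1}\partial_{p_2}^2$. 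Applying $\theta_2$ to each and extracting the appropriate coefficient produces the second, third, fourth, sixth, seventh, and eighth identities of the lemma.

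For the two identities involving $\partial_{p_3}$, I would invoke $L_1 G = 0$. Under $\theta_2$, the sum $\sum_{a\geq 1}(a+1) p_a \partial_{p_{a+1}} G$ collapses to the single term $3\theta_2 \partial_{p_3} G$, yielding
\begin{equation*}
\theta_2 \partial_{p_3} G = \tfrac{1}{3}\!\left(\tfrac{2}{t}\theta_2 \partial_{p_2} G - (1+u+v)\theta_2 \partial_{p_1} G\right).
\end{equation*}
Differentiating $L_1 G = 0$ once (resp.\ twice) with respect to $p_1$ before applying $\theta_2$ yields explicit formulas for $\theta_2\partial_{p_1}\partial_{p_3} G$ and $\theta_2 \partial_{p_1}^2 \partial_{p_3} G$. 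The corrective terms $-(u+z)H_{n-1}^g - H_{n-1}^{g-1/2}$ in the fifth and ninth identities arise exactly from the $(1+u+v)$ factor in $L_1$ combined with the already-established expression for $\theta_2\partial_{p_1} G^{u;z}$ after substitution $v\mapsto z$ and coefficient extraction.

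The main obstacle is the bookkeeping when iterating the Virasoro identities for the higher powers and the mixed $\partial_{p_3}$ cases, but each step is a routine polynomial manipulation in $t$ and $\partial_t$. The $n=0$ statement is then verified by direct inspection of the lowest-order terms of $G$: for all formulas except the first, the corresponding bipartite-map configurations require at least one quadrangle and hence $e\geq 3$, which forces the relevant quantities to vanish at the required low power of $t$.
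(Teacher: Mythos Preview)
Your approach is correct and matches one of the two routes the paper explicitly suggests. The paper's proof sketch says the lemma ``can easily be proved with Virasoro constraints in the same manner as \cref{prop:relGlambda3}'' and leaves the details to the reader; you have supplied precisely those details. Your use of $L_0$ (giving $\partial_{p_1}G = t^2\partial_t G + \tfrac{uvt}{2}$) and $L_1$ (isolating $\theta_2\partial_{p_3}G$ after the sum collapses under $\theta_2$) is exactly the right mechanism, and the iteration you describe does produce all nine identities. One small remark: for $\theta_2\partial_{p_2}G = \tfrac{1}{2}t\partial_t\Theta$ you invoke ``direct combinatorics'', but this also falls out immediately from the homogeneity relation $\sum_a a p_a\partial_{p_a}G = t\partial_t G$ under $\theta_2$, keeping the argument entirely within the Virasoro framework.

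The paper also points to a second, calculation-free approach based on digon contraction and local surgery on bipartite quadrangulations, following \cite[Lemma~7]{CarrellChapuy2015} verbatim except for the extra genus-$(g-\tfrac12)$ term arising from a twisted diagonal inside a digon. You do not take this route, but the Virasoro approach you chose is equally valid and is the one more consistent with the rest of the paper's methodology.
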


\begin{proof}
	The lemma can easily be proved with Virasoro constraints in
        the same manner as \cref{prop:relGlambda3} and details are left to the reader. However, a calculation-free proof based on digon contraction and elementary combinatorial map operation is also easily doable. The proof is completely similar to~\cite[Lemma~7]{CarrellChapuy2015}, the only difference is the extra term of genus $g-1/2$ in the two equations involving a hexagonal default (i.e. a $p_3$-derivative). This term comes from the possibility to create a rooted quadrangulation by adding a twisted diagonal inside a digon. This is the only difference between the oriented and non-oriented case, and it adds one term to Equation~(11) in~\cite{CarrellChapuy2015}. Once this difference is taken into account, the proof of~\cite[Lemma~7]{CarrellChapuy2015} can be copied verbatim.
\end{proof}

A direct consequence of what precedes is the recurrence formula stated
as Theorem~\ref{thm:recurrence2} in the introduction. One can also
obtain a version with control on vertices and faces, from which
Theorem~\ref{thm:recurrence2} follows immediately.


\begin{theorem}[Counting maps by vertices, faces, and genus]\label{thm:recurrence2bis}
	The generating polynomial 
	$$H_n^g\equiv H_n^g(u,z) = \sum_{i+j=n+2-2g}H_n^{i,j}u^iz^j$$
	of rooted maps of genus $g$ with $n$ edges, orientable or not, with weight $u$ per vertex and $z$ per face, can be computed from the following recurrence formula:
	\begin{multline} \label{eq:recurrenceCC2}
	H_n^g
=
\tfrac{2}{(n+1)(n-2)} \times \Bigg(
	n(2n-1)( (u+z) H_{n-1}^{g} + H_{n-1}^{g-1/2})
	+   \tfrac{(2 n-3) (2 n-2) (2 n-1) (2 n)}{2} H_{n-2}^{g-1}
\\ 
+12 \hsum_{g_1=0..g\atop g_1+g_2=g} \sum_{n_1=0..n \atop n_1+n_2=n} 
\tfrac{(2 n_2-1)(2 n_1-1)n_1}{2}        H_{n_2-1}^{g_2}          
 H_{n_1-1}^{g_1}   
 \\
-
\hsum_{g_1=0..g\atop g_1+g_2=g} \sum_{n_1=0..n-1\atop n_1+n_2=n}
	 \hsum_{g_0=0..g_1\atop g_1-g_0\in \mathbb{N}}
	 \sum_{p+q=n_1+2-2g_0}
	 2^{2 (1+g_1-g_0)} \phi_{p,q,n_1-2g_1}(u,z) H_{n_1}^{p,q}   
      \\
 \Big(
	 \tfrac{(2 n_2-1) (2 n_2-2) (2 n_2-3)}{2} H_{n_2-2}^{g_2-1}
	 -\delta_{(n_2,g_2)\neq(n,g)}\tfrac{n_2+1}{4} {H}_{n_2}^{g_2} 
	 +\tfrac{2 n_2-1}{2} ( (u+z) H_{n_2-1}^{g_2} + H_{n_2-1}^{g_2-1/2} )\\
+6 
\hsum_{g_3=0..g_2\atop g_3+g_4=g_2}
\sum_{n_3=0..n_2\atop n_3+n_4=n_2}
\tfrac{(2 n_3-1)(2 n_4-1) }{4}H_{n_3-1}^{g_3} H_{n_4-1}^{g_4}
\Big)\vspace{3cm} \Bigg)	
	\end{multline}
	for $n>2$, with the initial conditions 
	$H_0^0=uz$, $H_1^0=uz(u+z)$, $H_2^0=uz(2u^2+5uz+2z^2)$, $H_{1}^{1/2}=uz$, $H_2^{1/2}=5uz(u+z)$, $H_2^1=5uz$, and $H_n^g=0$ if $n<2g$, and where 
	$$\phi_{p,q,m}(u,z):= \sum_{i+j=m \atop p\geq i, q\geq j} {\textstyle {p \choose i} {q \choose j} u^i z^j}
.$$
\end{theorem}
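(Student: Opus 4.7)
The plan is to extract coefficients from the identity \eqref{eq:BKPbipartiteConnected} after applying the specialization $\theta_2$ and the substitution $u \to ur,\ z \to zr$, and then to read off the recurrence term by term using Lemma~\ref{lemma:CC}. The variable $r$ is a bookkeeping device tracking the genus through the exponent $n+2-2g$; to obtain the recurrence we extract the coefficient of $[t^{2n+1} r^{n+2-2g}]$. The nine entries of Lemma~\ref{lemma:CC} have been tailored exactly so that every non-shifted term appearing in \eqref{eq:BKPbipartiteConnected} translates directly into a $H$-expression. The left-hand side $G^{u;z}_{1^5}+3G^{u;z}_{2^2,1}-3G^{u;z}_{3,1^2}+12 G^{u;z}_{1^2}G^{u;z}_{1^3}$ becomes after extraction a polynomial in the $H_{m}^{h}$'s for $m<n$ plus a self-contribution whose coefficient in $H_{n}^{g}$ is $\tfrac{3n(n-1)}{2}-n(2n-1)=-\tfrac{n(n+1)}{2}$, coming from the $G_{2^2,1}$ and $G_{3,1^2}$ terms.

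For the right-hand side, I would split the product as $A\cdot B$ with $A := G^{u+2;z+2}_1-2G^{u;z}_1+G^{u-2;z-2}_1$ and $B := G^{u;z}_{1^4}+3G^{u;z}_{2^2}-3G^{u;z}_{3,1}+6(G^{u;z}_{1^2})^{2}$. The factor $B$ is handled exactly like the LHS, using Lemma~\ref{lemma:CC} and its convolution for the quadratic term $6(G^{u;z}_{1^2})^2$; this produces the inner bracket $\bigl(\tfrac{(2n_2-1)(2n_2-2)(2n_2-3)}{2}H_{n_2-2}^{g_2-1}-\tfrac{n_2+1}{4}H_{n_2}^{g_2}+\cdots\bigr)$ of \eqref{eq:recurrenceCC2}. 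For the factor $A$, the key computation is the binomial expansion
\[
[r^{m}]\bigl(f(ur+2,zr+2)-2f(ur,zr)+f(ur-2,zr-2)\bigr)=\sum_{\substack{p+q>m\\ p+q-m\in 2\mathbb{N}}} 2^{p+q-m+1}\phi_{p,q,m}(u,z)\,[u^{p}z^{q}]f(u,z),
\]
obtained by writing $(ur\pm 2)^{p}(zr\pm 2)^{q}$ and noticing that the $i=p,j=q$ contribution cancels against $-2u^{p}z^{q}r^{p+q}$. Parametrising $p+q=n_{1}+2-2g_{0}$ turns the parity condition into $g_{1}-g_{0}\in\mathbb{N}_{\ge 0}$ and the exponent of $2$ into $2(1+g_{1}-g_{0})$, which is precisely the combinatorial content of the double sum over $g_{0}$ and $(p,q)$ in \eqref{eq:recurrenceCC2}. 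Convolving with the $B$-expression along $n_{1}+n_{2}=n$, $g_{1}+g_{2}=g$ gives the full right-hand side.

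It remains to isolate $H_{n}^{g}$. A self-referential $H_{n}^{g}$ contribution on the RHS arises only from $(n_{1},g_{1})=(0,0)$, for which the $A$-factor reduces (because $H_{0}^{p,q}=\delta_{p,1}\delta_{q,1}$, $\phi_{1,1,0}=1$) to the constant $4$, multiplying $B$ at $(n_{2},g_{2})=(n,g)$, whose $H_{n}^{g}$-coefficient is $-(n+1)/4$; this contributes $-(n+1)H_{n}^{g}$. Combined with the LHS coefficient $-n(n+1)/2$, the total coefficient of $H_{n}^{g}$ is $-\tfrac{n(n+1)}{2}+(n+1)=-\tfrac{(n+1)(n-2)}{2}$, which yields the prefactor $\tfrac{2}{(n+1)(n-2)}$ in the recurrence; this also explains the $\delta_{(n_{2},g_{2})\neq(n,g)}$ indicator, which records that this particular term has been moved to the left. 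The initial conditions $H_{n}^{g}$ for $n\le 2$ can be verified by direct enumeration (or by reading them off Theorem~\ref{thm:MapsRecurrence}). The delicate part is purely bookkeeping: matching the parity condition $p+q\equiv m \pmod 2$ with the half-integer genus parametrisation $g_{1}-g_{0}\in\mathbb{N}$, and keeping track of which convolution terms must be excluded in order to cleanly separate the self-referential $H_{n}^{g}$ from the rest.
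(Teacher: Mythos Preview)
Your proof is correct and follows essentially the same approach as the paper's: apply $\theta_2$ to~\eqref{eq:BKPbipartiteConnected} after the substitution $u\to ur,\ z\to zr$, extract $[t^{2n+1}r^{n+2-2g}]$ via Lemma~\ref{lemma:CC}, expand the discrete Laplacian in $A$ using the bivariate binomial identity you state (which is exactly the paper's computation with $k=p+q$), and then isolate the self-referential $H_n^g$ coming from $(n_1,g_1)=(0,0)$ to produce the prefactor $\tfrac{2}{(n+1)(n-2)}$. The only small point you leave implicit is why the outer sum runs over $n_1\le n-1$: this is because, by the last sentence of Lemma~\ref{lemma:CC}, every term of the $B$-factor vanishes at $n_2=0$.
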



\begin{proof}

	We substitute $u \to ur, z \to zr$ in
        \eqref{eq:BKPbipartiteConnected} and we extract the
        coefficient of $[t^{2n+1}r^{n+2-2g}]$ after applying
        $\theta_2$. We obtain from Lemma~\ref{lemma:CC}

	\begin{multline*}
		-\tfrac{n(n+1)}{2}H_n^g +   
		n(2n-1)( (u+z) H_{n-1}^{g} + H_{n-1}^{g-1/2})
	+   \tfrac{(2 n-3) (2 n-2) (2 n-1) (2 n)}{2} H_{n-2}^{g-1}
\\ 
+12 \hsum_{g_1=0..g\atop g_1+g_2=g} \sum_{n_1=0..n \atop n_1+n_2=n} 
\tfrac{(2 n_2-1)(2 n_1-1)n_1}{2}        H_{n_2-1}^{g_2}          
 H_{n_1-1}^{g_1}   
 \\
=
\hsum_{g_1=0..g\atop g_1+g_2=g} \sum_{n_1=0..n-1\atop n_1+n_2=n}
 \left(
	 \hsum_{g_0=0..g_1\atop g_1-g_0\in \mathbb{N}} \sum_{p+q=n_1+2-2g_0}
	 2^{2 (1+g_1-g_0)} \phi_{p,q,n_1-2g_1}(u,z) H_{n_1}^{p,q}   
 \right)\\
 \Big(
	 \tfrac{(2 n_2-1) (2 n_2-2) (2 n_2-3)}{2} H_{n_2-2}^{g_2-1}
	 +\tfrac{-n_2-1}{4} {H}_{n_2}^{g_2} 
 +\tfrac{2 n_2-1}{2} ( (u+z) H_{n_2-1}^{g_2} + H_{n_2-1}^{g_2-1/2} )\\
+6 
\hsum_{g_3=0..g_2\atop g_3+g_4=g_2}
\sum_{n_3=0..n_2\atop n_3+n_4=n_2}
\tfrac{(2 n_3-1)(2 n_4-1) }{4}H_{n_3-1}^{g_3} H_{n_4-1}^{g_4}
\Big).	
	\end{multline*}
	Here we have extracted, respectively,
        in~\eqref{eq:BKPbipartiteConnected} (after applying $\theta_2$,
        and substitution $u \to ur, z \to zr$) the coefficient of $[t^{2n+1}r^{n+2-2g}]$, of $[t^{2n_1+1}r^{n_1-2g_1}]$, and of $[t^{2n_2}r^{n_2+2-2g_2}]$, in the LHS, in the first factor of the RHS, and in the second factor of the RHS. The summation over $n_1$ in the RHS stops at $(n-1)$ since from the last sentence of Lemma~\ref{lemma:CC}, the term $n_2=0$ does not contribute.
Moreover we have used
\begin{align*}
	&[r^m] (f(ru+2, rz+2)+f(ru-2, rz-2)-2f(ru,rz))\\ 
&=\sum_{i+j=m} u^i z^j\sum_{p\geq i, q\geq j\atop p+q\neq m} {p \choose i}{q \choose j}  (2^{p+q-m}+(-2)^{p+q-m} ) [u^pz^q] f(u,z) \\ 
&=\sum_{i+j=m} u^i z^j \sum_{p\geq i, q\geq j\atop p+q\geq m+2, p+q-m \in 2\mathbb{N}} {p \choose i}{q \choose j}  2^{1+p+q-m}  [u^pz^q] f(u,z)\\
&=\sum_{k\geq m+2\atop k-m \in 2\mathbb{N}}  2^{1+k-m} \sum_{p+q=k}\phi_{p,q,m}(u,z) [u^pz^q] f(u,z) 
,
\end{align*}
which we use with the parametrization $m=n_1-2g_1$, $k=n_1+2-2g_0$
(the condition $k\geq m+2$ translates into $g_0 \leq g_1$, and the summand is null when  $g_0<0$).

It now only remains to group the two terms of the form $H_n^g$ (namely: the first term of the LHS and the term $H_{n_2}^{g_2}$ in the RHS when $n_1=0, n_2=n, g_1=0, g_2=g$). They appear in the difference LHS-RHS with coefficient
$-\frac{n(n+1)}{2} +2^2\times  \frac{n+1}{4} = -\frac{(n+1)(n-2)}{2}$,
which leads to the main equation of the theorem after dividing by this factor. The identification of the initial conditions for $n\geq 2$ can be done in many ways including: by hand drawing, or from the OEIS, or from explicit expansions in small genera using the Virasoro constraints, or from the expansion in Zonal polynomials up to order $n=2$.
\end{proof}

\begin{proof}[Proof of Theorem~\ref{thm:recurrence2} stated in the introduction]
Note that $\hh_n^g = H_n^g(1,1)$ and
  \[ \phi_{p,q,n_1-2g_1}(1,1) = \sum_{i+j=m \atop p\geq i, q\geq j}
    {\textstyle {p \choose i} {q \choose j}} = {\textstyle {p+q
        \choose n_1-2g_1}}, \]
  therefore \eqref{eq:recurrenceCC} is a specialization of
  \eqref{eq:recurrenceCC2} at $u=z=1$.
\end{proof}

\vfill

\appendix

\section{Some tables} \label{app:Tables}

We provide here some tables computed with our recurrences, see also~\cite{us:Maple}.

\subsection{Rooted maps of genus \texorpdfstring{$g$}{g} with \texorpdfstring{$n$}{n} edges (orientable or not)}

\begin{center}
\begin{tiny}
\begin{tabular}{c|ccccc}
\midrule
$ n\backslash g$&\phantom{0}$0$\phantom{0}&\phantom{0}$1/2$\phantom{0}&\phantom{0}$1$\phantom{0}&\phantom{0}$3/2$\phantom{0}&\phantom{0}$2$\phantom{0}\\ 
  \midrule
  1 &                       2 &                      1 &0 &0&     0\\
 2 &                       9 &                       10 &5 &0&     0\\
 3 &                       54 &                       98 &104 &41 & 0\\
 4 &                      378 &                       983 &1647 &1380 & 509\\
 5 &                     2916 &                      10062 &23560 &31225  & 24286\\
 6 &                    24057 &                   105024 &320198 &592824& 724866\\
 7 &                   208494 &                  1112757 &4222792 &10185056& 17312568\\
 8 &                 1876446 &                11934910 &54617267 &164037704& 361811054\\
 9 &                17399772 &               129307100 &696972524 &2525186319& 6912864180\\
10 &               165297834 &             1412855500 &8807574390 &37596421940& 123814835628\\
11 &              1602117468 &            15548498902 &110483092984 &545585129474& 2111880200672\\
12 &             15792300756 &           172168201088 &1377998069826&7758174844664& 34669329147582\\
13 &            157923007560 &         1916619748084 &17108920039328&108518545261360& 551879941676492\\
14 &           1598970451545 &         21436209373224 &211636362018548&1497384373878512& 8565305839025180\\
15 &          16365932856990 &         240741065193282 &2609949110616064&20426386710028260&130146976774282440\\
16 &          169114639522230 &         2713584138389838 &32104324480419131&275940187259609296&1942255149093281772\\
  \midrule
\end{tabular}
%

\begin{tabular}{c|cccc}
\midrule
$ n\backslash g $ &\phantom{00000}$5/2$\phantom{00000}&\phantom{0000}$3$\phantom{0000}&\phantom{0000000000}$7/2$\phantom{0000000000}&\phantom{0000000}$4$\phantom{0000000}\\ 
\midrule
 5 &                    8229 &                      0 &                       0 &                       0\\
 6 &                   516958 &                   166377 &                    0 &                       0\\
 7 &                  19381145 &                  13093972 &                   4016613 &                       0\\
 8 &                 562395292 &                 595145086 &                 382630152 &                    113044185\\
 9 &                13929564070 &                20431929240 &                20549348578 &                  12704958810\\
10 &               309411522140 &              587509756150 &              818177659640 &                790343495467\\
11 &              6344707786945 &             14923379377192 &             26881028060634 &              35918779737610\\
12 &             122357481545872 &            345651571125768 &            770725841809552 &             1330964564940140\\
13 &            2247532739398856 &           7452363840633244 &          19946409152977346 &           42611002435124552\\
14 &           39681114425793904 &          151717486205709730 &         476412224477845444 &          1220973091185233106\\
15 &          677939355268197412 &         2946794762696249280 &        10665684328125155376 &         32054128913697072040\\
16 &         11265765391845733784 &         55029552840385680100 &      226357454725004343024&       783804517126931727890\\
  \midrule
\end{tabular}
\end{tiny}
\end{center}

\subsection{Rooted bipartite maps of genus \texorpdfstring{$g$}{g} with \texorpdfstring{$n$}{n} edges (orientable or not)}

\begin{center}
\begin{tiny}
\begin{tabular}{c|ccccc}
\midrule
$ n\backslash g $ &\phantom{0}$0$\phantom{0}&\phantom{0}$1/2$\phantom{0}&\phantom{0}$1$\phantom{0}&\phantom{0}$3/2$\phantom{0}&\phantom{0}$2$\phantom{0}\\ 
  \midrule
  1 &                       1 &                      0 &0 &0&     0\\
 2 &                       3 &                       1 &0 &0&     0\\
 3 &                       12 &                       9 &4 &0 & 0\\
 4 &                      56 &                       69 &63 &20& 0\\
 5 &                    288 &                     510 &720 &480& 148\\
 6 &                   1584 &                  3738 &7254 &7584& 4860\\
 7 &                  9152 &                27405 &68460 &99372& 99036\\
 8 &                54912 &               201569 &621315 &1169640& 1607432\\
 9 &               339456 &              1488762 &5496208 &12841632& 22759560\\
10 &              2149888 &            11043318 &47759130 &134278720& 293971176\\
11 &             13891584 &           82257890 &409620156 &1354371348& 3553592152\\
12 &            91287552 &          615092178 &3478672642&13287239184& 40855164228\\
13 &           608583680 &        4615882908 &29315742924&127526774024& 451592018748\\
14 &           4107939840 &        34752865332 &245539064736&1202371430148& 4836001359644\\
15 &         28030648320 &        262437282621 &2046309441924&11170818315900&50454786158100\\
16 &         193100021760 &        1987229885913 &16983591315267&102508926612240&515031678182160\\
  \midrule
\end{tabular}

\begin{tabular}{c|cccc}
\midrule
$ n\backslash g $ &\phantom{00000}$5/2$\phantom{00000}&\phantom{0000}$3$\phantom{0000}&\phantom{0000000000}$7/2$\phantom{0000000000}&\phantom{0000000}$4$\phantom{0000000}\\ 
\midrule
 6 &                  1348 &                   0 &                     0 &                       0\\
 7 &                 57204 &                 15104 &                   0 &                       0\\
 8 &                1445760 &                793260 &                198144 &                    0\\
 9 &               28251720 &               24092916 &               12500640 &                  2998656\\
10 &              470885712 &             553335140 &             446044020 &                222034464\\
11 &             7034561160 &            10652501508 &            11827897444 &              9139492032\\
12 &            96964428080 &           181251943620 &           259263273912 &             275741173612\\
13 &           1256403317832 &          2812951666460 &         4965451637328 &           6799083573828\\
14 &          15499423803780 &         40643437847436 &        85911625991020 &          145094953853052\\
15 &         183709516250796 &        554529301430940 &       1372607347932900 &         2774708761422460\\
16 &        2106284848285632 &        7218066635434760 &      20563312515574176&       48658560979911312\\
  \midrule
\end{tabular}
\end{tiny}
\end{center}

\subsection{Rooted triangulations of genus \texorpdfstring{$g$}{g} with \texorpdfstring{$2n$}{2n} faces (orientable or not)}

\begin{center}
\begin{tiny}
\begin{tabular}{c|ccc}
\midrule
$ n\backslash g $ &\phantom{0}$0$\phantom{0}&\phantom{0}$1/2$\phantom{0}&\phantom{0}$1$\phantom{0}\\ 
  \midrule
  1 &                       4 &                      9 &      7     \\
 2 &                       32 &                       118 & 202     \\
 3 &                       336 &                       1773 &4900   \\
 4 &                      4096 &                      28650 &112046 \\
 5 &                    54912 &                     484578 &2490132 \\
 6 &                   786432&                  8457708 &54442636   \\
 7 &                  11824384 &              151054173 &1177912344 \\
 8 &                184549376 &             2745685954 &25302706734 \\
 9 &               2966845440 &           50606020854 &540709469284 \\
10 &              48855252992 &        943283037684 &11509659737732 \\
11 &              820675092480 &    17746990547634 &244254583041960 \\
12 &            14018773254144 &   336517405188900 &5170993925895980\\
13 &          242919827374080 & 6423775409047716 &109258058984867592\\
14 &       4261707069259776 &123332141503711704 &2304778527410416728\\
15 &    75576645116559360 &2379824766494404317 &48552885599587471920\\
  \midrule
\end{tabular}

\begin{tabular}{c|ccc}
\midrule
$ n\backslash g $ &\phantom{0}$3/2$\phantom{0}&\phantom{0}$2$\phantom{0}&\phantom{00000}$5/2$\phantom{00000}\\ 
\midrule
 1 &   0& 0& 0\\
 2 &  128 &     0&  0  \\
 3 &    6786 & 3885& 0    \\
 4 &249416& 309792 & 163840      \\
 5 &7820190& 15536592 & 17742726     \\
 6 &224154528& 626073960& 1140086560   \\
 7 &6064485588& 22147258392& 56574101430   \\
 8 &157592065776& 718135826112& 2394618429216   \\
9 &3975252852294& 21875815507824& 90903502798380   \\
10 &98013064376240& 635740513124184& 3186926652389376 \\
11 &2373323509105164& 17808561973715832& 105134232237568182 \\
12 &56632532943141168& 484348105828421472& 3305475583204245376 \\
13 &1335091307453227116& 12857728996745420112& 99951709676667034212 \\
14 &31155184166556067968&334487003255090327376& 2926388895694344300864 \\
15 &720738499764872647080&8553392225715199201200& 83383518174303020028732 \\
  \midrule
\end{tabular}

\begin{tabular}{c|ccc}
\midrule
$ n\backslash g $ &\phantom{0000}$3$\phantom{0000}&\phantom{0000000000}$7/2$\phantom{0000000000}&\phantom{0000000}$4$\phantom{0000000}\\ 
\midrule
 4 &                                  0 &                       0 &                       0\\
 5 &                             8878870 &                      0 &                       0\\
 6 &                 1227058016 &                  587202560 &                       0\\
 7 &                96836144376 &                99359372628 &                    45877917085\\
 8 &               5738654714432 &               9344829276160 &                  9227542480640\\
9 &               283959455776728 &             646430229699516 &                1011244742721480\\
10 &             12391917590699520 &            36729466978572288 &              80222081136864896\\
11 &            492702239182522512 &           1816211696054002632 &             5159782135287908304\\
12 &           18229054925434379424 &         80930038930104447744 &           285723552389864612352\\
13 &           636795227835309684024 &        3324906134317505727756&          14128927461188199914592\\
14 &         21225085309259820837824 &       127965696661596592413184 &         639196545524077326637824\\
15 &        680321493460375920656880 &       4667484955217376877322616&       26909217174327495052218480\\
  \midrule
\end{tabular}
\end{tiny}
\end{center}

\section{A fixed-charge BKP equation}
\label{sec:fixedChargeBKP}

\begin{theorem}\label{thm:BKPunshifted}
	Let $\tau(N)$ be a  BKP tau function. Then for $k\in\mathbb{N}, k\geq 1$
the following identity holds in
$\mathbb{C}(N)[\pp,\qq][[t]]$:
\begin{multline}
  \label{eq:FixedCharge}
  2 F_{1^3}\operatorname{KP1}^3 =  (\operatorname{KP3}_1-2 \operatorname{KP2}_2)\operatorname{KP1}^2 -
(\operatorname{KP3}-3\operatorname{KP1}_{1^2}) \operatorname{KP1} \operatorname{KP1}_1  \\+2 (\operatorname{KP1}_2-\operatorname{KP2}_1)\operatorname{KP1}\operatorname{KP2}+ 2 \operatorname{KP2}^2 \operatorname{KP1}_1 
-2 \operatorname{KP1}_1^3 - \operatorname{KP1}^2 \operatorname{KP1}_{1^3},
\end{multline}
where
\begin{align*}
 \operatorname{KP1} &= -F_{3,1} + F_{2^2} + \frac{1}{2}F_{1^2}^2 +\frac{1}{12}F_{1^4},\\
\operatorname{KP2} &= -2F_{4,1} + 2F_{3,2} + 2F_{2,1}F_{1^2}
                     +\frac{1}{3} F_{2,1^3},\\
\operatorname{KP3}&=-6F_{5,1}+ 4F_{4,2}+2F_{3^2}+
                    4F_{3,1}F_{1^2}
                    +\frac{2}{3}F_{3,1^3}+4F_{2,1}^2+2F_{2^2}F_{1^2} +
                    F_{2^2,1^2}\\
                    &+ \frac{1}{3}F_{1^2}^3 + \frac{1}{6} F_{1^4}F_{1^2} + \frac{1}{180} F_{1^6},
\end{align*}
	with $F\equiv F(N)  = \log \tau(N)$.
\end{theorem}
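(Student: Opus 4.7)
The strategy is to combine the first three BKP equations \eqref{BKP1Log}, \eqref{BKP2Log}, \eqref{BKP3Log} with a small number of their $p_1$- and $p_2$-derivatives in order to eliminate every shifted quantity $F_\lambda(N\pm 2)$, leaving only local derivatives of $F(N)$. The key observation is that the shift content of these three BKP equations can be encoded by a short list of auxiliary unknowns, and taking enough $p$-derivatives overdetermines this system just enough to extract a single local identity.

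Concretely, set $E:=S_2(N)\tau(N-2)\tau(N+2)\tau(N)^{-2}$ and use the shorthands $\Delta f:=f(N+2)-f(N-2)$, $\nabla f:=f(N+2)+f(N-2)$. The three BKP equations then read $\operatorname{KP1}=E$, $\operatorname{KP2}=E\cdot\Delta F_1$, and $\operatorname{KP3}=E\cdot\bigl(\nabla F_{1^2}+2\Delta F_2+(\Delta F_1)^2\bigr)$. Differentiating these using the basic identity $\partial_{p_k}\log E=\nabla F_k-2F_k$ yields
\begin{equation*}
\operatorname{KP1}_1=E\mu,\quad \operatorname{KP1}_{1^2}=E(\mu^2+\nu),\quad \operatorname{KP1}_{1^3}=E(\mu^3+3\mu\nu+\rho),\quad \operatorname{KP2}_1=E(\mu\alpha+\delta),
\end{equation*}
together with analogous formulas for $\operatorname{KP1}_2$ and $\operatorname{KP2}_2$, where $\alpha:=\Delta F_1$, $\mu:=\nabla F_1-2F_1$, $\nu:=\nabla F_{1^2}-2F_{1^2}$, $\rho:=\nabla F_{1^3}-2F_{1^3}$, $\delta:=\Delta F_{1^2}$, and $\eta,\xi$ are the $p_2$-analogues of $\mu,\delta$.

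The decisive step is the expansion of $\operatorname{KP3}_1/E$: using $\partial_{p_1}(\nabla F_{1^2})=\nabla F_{1^3}=\rho+2F_{1^3}$ one obtains
\begin{equation*}
\operatorname{KP3}_1/E \;=\; \mu\,(\operatorname{KP3}/E) \;+\; 2F_{1^3} \;+\; \rho \;+\; 2\xi \;+\; 2\alpha\delta,
\end{equation*}
so that $F_{1^3}$ is the unique non-shifted quantity appearing. Each of the auxiliaries $\mu,\eta,\alpha,\nu,\rho,\delta,\xi$ can then be read off in triangular fashion from the previous relations as a rational function of $\operatorname{KP1},\operatorname{KP1}_1,\operatorname{KP1}_2,\operatorname{KP1}_{1^2},\operatorname{KP1}_{1^3},\operatorname{KP2},\operatorname{KP2}_1,\operatorname{KP2}_2$ (for instance $\rho=\operatorname{KP1}_{1^3}/E-3\mu\,\operatorname{KP1}_{1^2}/E+2\mu^3$). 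Substituting them and solving for $2F_{1^3}$ gives a closed rational expression with denominator $E^3=\operatorname{KP1}^3$; clearing this denominator and grouping terms produces precisely~\eqref{eq:FixedCharge}.

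The derivation is entirely algebraic and mechanical; the main obstacle is accurate bookkeeping of the auxiliaries, which is best handled by symbolic computation. The argument mirrors the elimination carried out in the proof of Theorem~\ref{theo:ODEMaps}, the essential novelty being that all derivations are with respect to the $p$-variables, so that no specialization by $\theta$ and no Virasoro input are needed to close the system.
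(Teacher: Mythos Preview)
Your proposal is correct and follows essentially the same route as the paper's proof: both use $E=\operatorname{KP1}$, differentiate $\operatorname{KP3}/E$ once in $p_1$, and eliminate the shifted quantities $\Delta F_1,\Delta F_{1^2},\Delta F_{2,1},\nabla F_k-2F_k$ via the ratios $\operatorname{KP2}/\operatorname{KP1}$, $\operatorname{KP1}_1/\operatorname{KP1}$, etc., leaving $2F_{1^3}$ as the lone local survivor. The paper organizes the same elimination slightly more compactly by writing it directly as $(\operatorname{KP3}/\operatorname{KP1})_1 = 2F_{1^3} + (\operatorname{KP1}_1/\operatorname{KP1})_{1^2} + 2(\operatorname{KP2}/\operatorname{KP1})_2 + 2(\operatorname{KP2}/\operatorname{KP1})(\operatorname{KP2}/\operatorname{KP1})_1$ rather than naming the auxiliaries $\mu,\nu,\rho,\alpha,\delta,\eta,\xi$, but the content is identical.
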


\begin{proof}
Denote $\Delta f(N) = f(N+2)-f(N-2)$ and $\nabla f = f(N+2)+f(N-2)$, and
\begin{equation}
E =S_2(N) e^{\nabla F(N)-2F(N)}.
\end{equation}
Using \eqref{BKP1Log}, \eqref{BKP2Log},
\eqref{BKP3Log} we obtain the
following equations
\begin{equation}
E = \operatorname{KP1}, \qquad E \Delta (F_1) = \operatorname{KP2}, \qquad E(\nabla(F_{1^2})+ 2\Delta(F_2) + (\Delta F_1)^2) = \operatorname{KP3},
\end{equation}
where $\operatorname{KP1}, \operatorname{KP2},
\operatorname{KP3}$ are given in the statement and they  are the LHS in \eqref{BKP1Log}, \eqref{BKP2Log},
\eqref{BKP3Log}.
We need to differentiate the 3rd equation w.r.t. $p_1$,
\begin{equation}
\Delta 2F_{2,1} + \nabla F_{1^3} + 2\Delta F_1 \Delta F_{1^2} = \Bigl(\frac{\operatorname{KP3}}{\operatorname{KP1}}\Bigr)_1
\end{equation}
and rewrite the LHS using derivatives of the first 2 BKP equations,
\begin{multline}
2\Delta F_{2,1} + \nabla F_{1^3} + 2\Delta F_1 \Delta F_{1^2} = 2\Bigl(\frac{\operatorname{KP2}}{\operatorname{KP1}}\Bigr)_2 + 2 F_{1^3} + \Bigl(\frac{\operatorname{KP1}_1}{\operatorname{KP1}}\Bigr)_{1^2} + 2 \Bigl(\frac{\operatorname{KP2}}{\operatorname{KP1}}\Bigr) \Bigl(\frac{\operatorname{KP2}}{\operatorname{KP1}}\Bigr)_1\\
= 2 F_{1^3} + 
\frac{1}{\operatorname{KP1}^3}\Bigl(-2\operatorname{KP1}\operatorname{KP2}\operatorname{KP1}_2 + 2\operatorname{KP1}^2 \operatorname{KP2}_2 - 2 \operatorname{KP2}^2 \operatorname{KP1}_1 \\
+2 \operatorname{KP1}_1^3 + 2 \operatorname{KP1}\operatorname{KP2} \operatorname{KP2}_1 - 3 \operatorname{KP1} \operatorname{KP1}_1 \operatorname{KP1}_{1^2} + \operatorname{KP1}^2 \operatorname{KP1}_{1^3}\Bigr).
\end{multline}
We finally equate the RHS of the above two equations and multiply by
$\operatorname{KP1}^3$ to obtain \eqref{eq:FixedCharge}.
\end{proof}

\bibliographystyle{amsalpha}
\bibliography{biblio2015}
\end{document}